\newenvironment{psmallmatrix}
  {\left(\begin{smallmatrix}}
  {\end{smallmatrix}\right)}
\theoremstyle{plain}
\newtheorem{theorem}{Theorem}
\newtheorem{proposition}[theorem]{Proposition}
\newtheorem{lemma}[theorem]{Lemma} 
\newtheorem{corollary}[theorem]{Corollary}
\newtheorem{fact}[theorem]{Fact}
\theoremstyle{definition}
\newtheorem{definition}[theorem]{Definition}
\newtheorem{remark}[theorem]{Remark}
\newtheorem{question}[theorem]{Question}
\newtheorem{example}[theorem]{Example}
\newcommand{\dd}{{\mathrm{d}}}
\newcommand{\exend}{\hfill $\Diamond$}
\DeclareMathOperator{\norm}{norm}
\DeclareMathOperator{\Aut}{Aut}
\begin{document}

\title{Admissible Reversing and Extended Symmetries for Bijective Substitutions}

\subjclass[2010]{52C23, 37B10, 37B52, 20B27}
\keywords{Extended symmetries, automorphism groups, substitution subshifts, aperiodic tilings}

\author{\'{A}lvaro Bustos}
\address{Departamento de Ingeniería Matemática,
Universidad de Chile, 
 \newline
\hspace*{\parindent}Beauchef 851, Santiago, Chile}
\email{abustos@dim.uchile.cl}

\author{Daniel Luz}
\author{Neil Ma\~nibo} 
\address{Fakult\"at f\"ur Mathematik, Universit\"at Bielefeld, \newline
\hspace*{\parindent}Postfach 100131, 33501 Bielefeld, Germany}
\email{\{dluz,cmanibo\}@math.uni-bielefeld.de }

\begin{abstract}
In this paper, we deal with reversing and extended symmetries of shifts generated by bijective substitutions. We provide equivalent conditions for a permutation on the alphabet to generate a reversing/extended symmetry, and algorithms how to check them. Moreover, we show that, for any finite group $G$ and any subgroup $P$ of the $d$-dimensional hyperoctahedral group, there is a bijective substitution which generates an aperiodic hull with symmetry group $\mathbb{Z}^{d}\times G$ and extended symmetry group $(\mathbb{Z}^{d} \rtimes P)\times G$.  
\end{abstract}

\maketitle

\section{Introduction}
The study of symmetry groups, often also known as automorphism groups, is an important part of the analysis of a dynamical system, as it can offer insight on the behaviour of the system, as well as allowing classifications of distinct families of dynamical systems (acting as a conjugacy invariant). In particular, symmetry groups of  shift spaces have been thoroughly studied (see e.g. the analysis of the symmetry group of the full shift \cite{BLR}, the series of works on symmetries in low-complexity subshifts \cite{CK, CQY,DDMP}, and recent works on shifts of algebraic and number-theoretic origin \cite{BBHLN, FY}).

Symmetries of subshifts can be algebraically defined as elements of  the topological centraliser of the group $\left\langle\sigma\right\rangle$ generated by the shift, seen as a  subgroup of the space $\mathrm{Aut}(\mathbb{X})$ of all self-homeomorphisms of $\mathbb{X}$ onto itself.
Thus, a natural question at this point is whether the corresponding normaliser has an interesting dynamical interpretation as well. This leads to the concept of \emph{reversing symmetries} (for $d=1$); see \cite{BR,Goodson,BRY}, the monograph \cite{OS} for a group-theoretic exposition, and \cite{LR} for a more physical background. These are special types of flip conjugacies; see \cite{BM}. In higher dimensions, one talks of  \emph{extended symmetries}; see \cite{Baake2,BRY}, which 
are examples of $\text{GL}(d,\mathbb{Z})$-conjugacies; compare \cite{Lab,BBHLN}.  
 These kinds of maps are related to phenomena such as palindromicity and several properties of
 geometric and topological nature, which is more evident in the higher-dimensional setting \cite{BRY,B}.

High complexity is often (but not always, see for instance the square-free subshift \cite{BBHLN}) linked to a complicated symmetry group. For instance, determining whether the symmetry groups of the full shifts in two and three symbols are isomorphic has consistently proven to be a difficult question \cite{BLR}. The low-complexity situation, thus, often allows for a more in-depth analysis and more complete descriptions, up to and including explicit computation of these groups in many cases.

The particular case of substitutive subshifts 
has gathered significant attention and here a lot of progress has been made; see \cite{MY,KY}.
Unsurprisingly, the presence of non-trivial symmetries is also tied to the spectral structure of the underlying dynamical system; see \cite{Q,Frank2}. 
In this work, we restrict to systems generated by bijective substitutions, both in one and in higher dimensions.  These substitutions are typically $n$-to-$1$ extensions of odometers and generate coloured tilings of $\mathbb{Z}^d$ by unit cubes, where one usually identifies a letter with a unique colour; see \cite{Frank2}. We compile and extend  known properties about this family of substitutive subshifts regarding symmetries. Some natural questions in this direction are:
\begin{enumerate}
    \item What kinds of groups can appear as symmetry groups/extended symmetry groups of specific substitutive subshifts?
    \item Given a specific group $G$, can we construct a substitution whose associated subshift has $G$ as its symmetry group/extended symmetry group? 
\end{enumerate}

Both questions are accessible for bijective substitutions. For symmetry groups, the second question is answered in full in \cite{DDMP}, which extends to higher dimensions with no additional assumptions because the result does not depend on the geometry of the substitution; see \cite{CP} for realisation results for more general group actions. We add to such known results in Theorem~\ref{thm:alphabet size limitation}. 
Aperiodicity also plays a key role here, which can easily be confirmed in the bijective setting; see Propositions~\ref{prop: aperiodicity proximal pair} and \ref{prop: aperiodicity HD proximal}. 

On the other hand, the existence of non-trivial reversing or extended symmetries depends heavily on the geometry and requires more in terms of the relative positions of the permutations in the corresponding supertiles, the expansive maps, and the shape of the supertiles themselves.
In Theorem~\ref{thm: reversing symmetries iff}, we provide
equivalent conditions for the existence of non-trivial reversing symmetries, which we generalise to higher dimensions in Theorem~\ref{thm: extended symmetries iff} to cover extended symmetries. 

As a corollary, in any dimension $d$, given a finite group $G$ and a subgroup $P$ of the hyperoctahedral group $P$, we provide a construction in Theorem~\ref{thm: G and P construction} of a bijective substitution whose underlying shift space has symmetry group and extended symmetry group $\mathbb{Z}^{d}\times G$ and $(\mathbb{Z}^{d}\rtimes P) \times G$, respectively. A similar construction with a different structure of the extended symmetry group is done in Theorem~\ref{thm: G and P non id}. We also provide algorithms on how one can check whether there exist non-trivial symmetries and extended symmetries for a given substitution $\varrho$; see Sections~\ref{sec: symmetries} and \ref{sec: reversing symm}.

\section{Bijective constant-length substitutions}

\subsection{Setting and basic properties}

Let $\mathcal{A}$ be a finite alphabet and $\mathcal{A}^{+}=\bigcup_{L\geqslant 1}\mathcal{A}^{L}$ be the set of finite non-empty words over $\mathcal{A}$; we shall write $\mathcal{A}^* = \mathcal{A}^+\cup\{\varepsilon\}$, where the latter is the empty word. A \emph{substitution} is a map $\varrho\colon \mathcal{A}\to\mathcal{A}^{+}$. If there exists an $L\in\mathbb{N}$ such that $\varrho(a)\in \mathcal{A}^{L}$ for all $a\in\mathcal{A}$, $\varrho$ is called a \emph{constant-length} substitution.  If there exists a power $k$ such that $\varrho^{k}(a)$ contains all letters in $\mathcal{A}$, for every $a\in\mathcal{A}$, we call $\varrho$ \emph{primitive}.

The \emph{full shift} is the set $\mathcal{A}^{\mathbb{Z}}$ of all functions (configurations) $x\colon \mathbb{Z}\to\mathcal{A}$. More generally, we define the $d$-dimensional full shift as the set $\mathcal{A}^{\mathbb{Z}^d}$. To this space, we assign the product topology, giving $\mathcal{A}$ the discrete topology. This is a particular version of the \emph{local topology} used in tiling spaces and discrete point sets, in which two tilings (or point sets) $x$ and $y$ are said to be $\varepsilon$-\emph{close} if a small translation of $x$ (of magnitude less than $\varepsilon$) matches $y$ on a large ball (of radius at least $1/\varepsilon$) around the origin; this can be used to define a metric $\dd(x,y)$. In the particular case of shift spaces seen as tiling spaces, since tiles are aligned with $\mathbb{Z}^d$, we can disregard the translation and get, e.g., the following as an equivalent metric:    
\[\dd(x,y) = 2^{-\inf\left\{n\::\:x\rvert_{[-n,n]^d} \neq y\rvert_{[-n,n]^d} \right\}}.
\]
This space is endowed with the \emph{shift action} of $\mathbb{Z}^d$ on $\mathcal{A}^{\mathbb{Z}^d}$, which is the action of $\mathbb{Z}^d$ over configurations by translation, and can be defined via the equality $(\sigma_{\boldsymbol{n}}(x))_{\boldsymbol{m}} = x_{\boldsymbol{n}+\boldsymbol{m}}$ for all $x\in\mathcal{A}^{\mathbb{Z}^d}$, $\boldsymbol{m},\boldsymbol{n}\in\mathbb{Z}^d$ (in particular, in one dimension we have the shift map $\sigma=\sigma_1$, which completely determines the group action).

A \emph{subshift} is a topologically closed subset $\mathbb{X}\subseteq\mathcal{A}^{\mathbb{Z}^d}$ which is also invariant under the shift action. Thus, a subshift combined with the restriction of this group action to $\mathbb{X}$ defines a topological dynamical system, which can be endowed with one or more measures to obtain a measurable dynamical system. In the one-dimensional case, the \emph{language} (or \emph{dictionary}) of a subshift $\mathbb{X}$ is the set of all words that may appear contained in some $x\in\mathbb{X}$, that is:
\[\mathcal{L}(\mathbb{X}) = \{x|_{[0,n]}\::\:x\in\mathbb{X}, n\geqslant 0 \}\cup \left\{\varepsilon\right\}.\]

We may verify that any nonempty set of words $\mathcal{L}$ which is \emph{extensible} (that is, any $w\in\mathcal{L}$ is a subword of a longer word $w'\in\mathcal{L}$) and closed under taking subwords is the language of a subshift, and two subshifts are equal if and only if they share the same language.

Higher-dimensional subshifts have a similar combinatorial characterisation, where the role of words is taken by \emph{patterns}, finite configurations of the form $P\colon U\subset\mathbb{Z}^d\to\mathcal{A},\lvert U\rvert < \infty$; we identify a pattern with any of its translations. In most cases\footnote{Pattern shapes do matter when studying certain generalisations of topological mixing in the $d$-dimensional setting, where either restricting ourselves to specific shapes (rectangles, L-shapes, hollow rectangles, etc.) or allowing arbitrary ones may be preferrable depending on context. However, we are not concerned with these kinds of properties here.} (and, in particular, in the rest of this work), it makes no difference to allow arbitrary ``shapes'' $U$ or to restrict ourselves to only rectangular patterns, i.e. products of intervals of the form $U=\prod_{i=1}^d[0,n_i-1]$. Regardless of our chosen convention, we collect all valid patterns $x\rvert_U$ that appear in some $x\in\mathbb{X}$ into a set $\mathcal{L}(\mathbb{X})$ as above, which we once again call the \emph{language} of $\mathbb{X}$. As in the one-dimensional case, a language closed under taking subpatterns and  where every pattern of shape $U$ is contained in a pattern of shape $V\supset U$ for any larger (finite) $V$ defines a unique subshift, and vice versa. 

Thus, given that iterating a primitive substitution $\varrho\colon\mathcal{A}\to\mathcal{A}^L$ of constant length $L>1$ over a symbol $a\in\mathcal{A}$ produces words of increasing length, the set $\mathcal{L}_\varrho$ of all words that are subwords of some $\varrho^k(a)$ for some $k\geqslant 1$ and $a\in\mathcal{A}$ is the language of a unique subshift that depends only on $\varrho$, which we shall call the \emph{substitutive subshift} defined by $\varrho$ and denote by $\mathbb{X}_\varrho$. This definition extends to $d$-dimensional \emph{rectangular substitutions} $\varrho\colon\mathcal{A}\to\mathcal{A}^R$ (where $R$ is a product of intervals), which are higher-dimensional analogues of constant-length substitutions; see \cite{Frank2,Q,Bartlett}.
It is well known that the primitivity of $\varrho$ implies that $\mathbb{X}_{\varrho}$ is strictly ergodic (uniquely ergodic and minimal); see \cite{Q,BG}. 
We refer the reader to \cite{MR} for a treatment of substitutions which are non-primitive.

\begin{definition}
    A constant-length substitution $\varrho\colon\mathcal{A}\to\mathcal{A}^L$ is called \emph{bijective} if the map which is given by $\varrho_j\colon a\mapsto\varrho(a)^{ }_j$ is a bijection on  $\mathcal{A}$, for all indices $0 \leqslant j \leqslant L-1$. Equivalently, $\varrho$ is bijective if there exist $L$ (not necessarily distinct) bijections $\varrho^{ }_0,\dotsc,\varrho^{ }_{L-1}\colon\mathcal{A}\to\mathcal{A}$ such that $\varrho(a) = \varrho^{ }_0(a)\dotsc\varrho^{ }_{L-1}(a)$ for every $a\in\mathcal{A}$. We shall refer to the mapping $\varrho^{ }_j$ as the $j$-th \emph{column} of the substitution $\varrho$.
\end{definition}

Consider $\big\{\varrho^{ }_{j}\big\}_{j=0}^{L-1}\subset S_{|\mathcal{A}|}$. Let $\Phi\colon S_{|\mathcal{A}|}\to \text{GL}(|\mathcal{A}|,\mathbb{Z})$ be the representation via permutation matrices. One then has the following; compare \cite[Cor.~1.2]{Frank2}.

\begin{fact} \label{fact:frequency_bij}
Let $\varrho$ be a primitive, bijective substitution, whose columns are given by 
$\left\{\varrho^{ }_0,\ldots,\varrho^{ }_{L-1}\right\}$. Then the substitution matrix $M$ is given by $M=\sum_{j=0}^{L-1}\Phi(\varrho^{-1}_{j})$. Moreover, $(1,1,\ldots,1)^{T}$ is a right Perron--Frobenius eigenvector of $M$, so each letter has the same frequency for every element in the hull $\mathbb{X}_\varrho$, i.e., $\nu_a=\frac{1}{|\mathcal{A}|}$ for all $a\in\mathcal{A}$ and all $x\in\mathbb{X}_{\varrho}$. \qed
\end{fact}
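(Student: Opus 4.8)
The plan is to verify the three assertions in turn: the first by a direct entry-wise computation, the second from elementary properties of permutation matrices together with Perron--Frobenius theory, and the third from strict ergodicity.

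First I would compute the substitution matrix from the column decomposition. Writing $\varrho(a)=\varrho^{ }_0(a)\varrho^{ }_1(a)\cdots\varrho^{ }_{L-1}(a)$, the letter occupying position $j$ of $\varrho(a)$ is exactly $\varrho^{ }_j(a)$. Hence a given letter $b$ occurs at position $j$ of $\varrho(a)$ if and only if $\varrho^{ }_j(a)=b$, i.e.\ $a=\varrho^{-1}_j(b)$. Summing this indicator over $0\leqslant j\leqslant L-1$ counts the occurrences of $b$ in $\varrho(a)$, which is the $(a,b)$-entry of $M$; since that same indicator is precisely the $(a,b)$-entry of the permutation matrix $\Phi(\varrho^{-1}_j)$, one obtains $M=\sum_{j=0}^{L-1}\Phi(\varrho^{-1}_j)$. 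The only thing to watch here is fixing compatible conventions for the ordering of the indices in $M$ and for the map $\Phi$ (which of $\pi$ or $\pi^{-1}$ one assigns to a given $0/1$ matrix); this bookkeeping is exactly what produces the inverse $\varrho^{-1}_j$ rather than $\varrho^{ }_j$, and is the one genuinely error-prone step.

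Next I would read off the eigenvector. Each $\Phi(\varrho^{-1}_j)$ is a permutation matrix, so all of its row sums and all of its column sums equal $1$; consequently $M$ is a nonnegative integer matrix all of whose row and column sums equal $L$. Setting $v=(1,1,\ldots,1)^{T}$, this says at once that $Mv=Lv$ and $v^{T}M=Lv^{T}$, so $v$ is simultaneously a right and a left eigenvector for the eigenvalue $L$. Since $\varrho$ is primitive, $M$ is a primitive nonnegative matrix, and the Perron--Frobenius theorem then guarantees a simple dominant eigenvalue whose eigenspace is spanned by a strictly positive vector, unique up to scaling. As $v$ is strictly positive, it must be this Perron--Frobenius eigenvector and $L$ the Perron--Frobenius eigenvalue.

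Finally, the frequency statement follows from the strict ergodicity of $\mathbb{X}_\varrho$, noted above for primitive $\varrho$: the letter frequencies $\nu_a$ exist and are independent of the chosen $x\in\mathbb{X}_\varrho$. Standard substitution theory identifies the vector $(\nu_a)_{a\in\mathcal{A}}$ with the Perron--Frobenius eigenvector of $M$, normalised so that its entries sum to $1$. Because that eigenvector is $v$ up to scaling --- and $v$ serves as both the left and the right eigenvector, so no convention-dependent transpose can change the conclusion --- normalisation forces every entry to equal $1/|\mathcal{A}|$, giving $\nu_a=1/|\mathcal{A}|$ for all $a\in\mathcal{A}$. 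I expect the only (mild) obstacle to be not the mathematics but the convention tracking in the first step; everything afterwards is forced.
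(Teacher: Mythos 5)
Your proposal is correct. Note that the paper does not actually prove this statement: it is recorded as a \emph{Fact} with a reference to \cite[Cor.~1.2]{Frank2}, so there is no in-paper argument to compare against. Your write-up supplies exactly the standard justification the citation points to: the entry-wise identification $M=\sum_{j}\Phi(\varrho^{-1}_{j})$ (where, as you rightly flag, the appearance of $\varrho^{-1}_{j}$ versus $\varrho^{ }_{j}$ is purely a matter of the chosen conventions for $M$ and $\Phi$), the observation that a sum of $L$ permutation matrices has all row and column sums equal to $L$ so that $(1,\ldots,1)^{T}$ is simultaneously a left and right eigenvector for the eigenvalue $L$, uniqueness of the positive eigenvector via Perron--Frobenius for the primitive matrix $M$, and the identification of letter frequencies with the normalised Perron--Frobenius eigenvector, valid for every $x\in\mathbb{X}_\varrho$ by the strict ergodicity that the paper records for primitive substitutions. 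Your remark that the all-ones vector being both a left and a right eigenvector makes the conclusion immune to the transpose convention is a nice touch; no gaps.
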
 

Define the \emph{$n$-th column group} $G^{(n)}$ to be the following subgroup of the symmetric group of bijections $\mathcal{A}\to\mathcal{A}$:
        \[G^{(n)} := \langle \{\varrho^{ }_{j_1}\circ\cdots\circ\varrho^{ }_{j_n}\:|\: 0\leqslant j_1,\dotsc,j_n \leqslant L-1\}\rangle. \]

As it turns out, the groups $G^{(n)}$ generated by the columns  give a good description of the substitution $\varrho$ in the bijective case; see
\cite{KY} for its relation to the corresponding Ellis semigroup of $\mathbb{X}_\varrho$. 
The primitivity of $\varrho$ may be characterised entirely by this family of groups, as seen below. Recall that a subgroup $G\leqslant S_n$ of the symmetric group on $\{1,\dotsc,n\}$ is \emph{transitive} if for all $1\leqslant j,k\leqslant n$ there exists $\tau\in G$ such that $\tau(j)=k$. Here, we let $N\in\mathbb{N}$ be the minimal power such that $\varrho^{N}_j=\text{id}$ for some $0\leqslant j\leqslant L^N-1$; compare \cite[Lem.~8.1]{Q}. In \cite{KY}, $G^{(N)}$ is called the \emph{structure group} of $\varrho$. 

\begin{proposition}
    \label{prop:group_primitivity}
    Let $\varrho\colon\mathcal{A}\to\mathcal{A}^L$ be a bijective substitution. Then, the following are equivalent:
    \begin{enumerate}
        \item The substitution $\varrho$ is primitive.
        \item All groups $G^{(n)},n\in\mathbb{N}$, are transitive.
        \item The group $G^{(N)}$ is transitive.
    \end{enumerate}
\end{proposition}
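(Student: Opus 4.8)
The plan is to translate everything into the combinatorics of the columns of the iterates $\varrho^n$ and the entries of the powers $M^n$, and then to close the cycle of implications $(1)\Rightarrow(2)\Rightarrow(3)\Rightarrow(1)$.

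First I would record the basic dictionary. Peeling off one application of $\varrho$ at a time, one checks by induction that the columns of $\varrho^n$ are exactly the $n$-fold compositions $\varrho^{ }_{j_1}\circ\cdots\circ\varrho^{ }_{j_n}$ with $0\le j_i\le L-1$; hence the set $S_n$ of columns of $\varrho^n$ is precisely the generating set of $G^{(n)}$, so $G^{(n)}=\langle S_n\rangle$. Concatenating index strings gives $S_m\circ S_n\subseteq S_{m+n}$, and blocking a string of length $kn$ into $k$ consecutive blocks of length $n$ gives $S_{kn}\subseteq G^{(n)}$. On the matrix side, by Fact~\ref{fact:frequency_bij} the entry $(M^n)_{ab}$ counts the occurrences of $a$ in $\varrho^n(b)$, so $(M^n)_{ab}>0$ exactly when some column of $\varrho^n$ sends $b$ to $a$; thus $M^n>0$ iff $S_n$ is \emph{transitive as a set}, meaning that for all $a,b$ there is some $s\in S_n$ with $s(b)=a$. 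Since $\varrho^{ }_0$ is a bijection, every letter appears in the image and $M$ has no zero row, so $M^{k}>0$ forces $M^{m}>0$ for all $m\ge k$; hence primitivity of $\varrho$ is equivalent to $M^m>0$ for all sufficiently large $m$.

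For $(1)\Rightarrow(2)$, I fix $n$ and use primitivity to find a multiple $kn$ large enough that $M^{kn}>0$, i.e.\ $S_{kn}$ is transitive as a set; since $S_{kn}\subseteq G^{(n)}$, the group $G^{(n)}$ is itself transitive. The implication $(2)\Rightarrow(3)$ is immediate upon taking $n=N$. The heart of the argument is $(3)\Rightarrow(1)$, for which I would first prove the stabilisation statement $\bigcup_k S_{kn}=G^{(n)}$ for every $n$: this union contains $S_n$ and, by $S_{kn}\circ S_{ln}\subseteq S_{(k+l)n}$, is closed under composition, so as a nonempty finite subset of $S_{|\mathcal{A}|}$ closed under the group operation it is a subgroup, hence contains $\langle S_n\rangle=G^{(n)}$, while $S_{kn}\subseteq G^{(n)}$ gives the reverse inclusion. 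Specialising to $n=N$, the definition of $N$ gives $\id\in S_N$, so $S_{kN}=S_{kN}\circ\{\id\}\subseteq S_{(k+1)N}$; thus $(S_{kN})_k$ is an increasing sequence of finite sets with union $G^{(N)}$, and therefore $S_{KN}=G^{(N)}$ for all large $K$. Assuming $(3)$, transitivity of $G^{(N)}$ now says exactly that the set $S_{KN}=G^{(N)}$ is transitive as a set, i.e.\ $M^{KN}>0$, so $M$ is primitive and $\varrho$ is primitive.

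I would also include a short preliminary remark that $N$ is well defined: the $0$-th column of $\varrho^k$ is $\varrho_0^{\,k}$, and since $\varrho^{ }_0$ has finite order in $S_{|\mathcal{A}|}$, some power is the identity. The main obstacle is precisely the stabilisation step $S_{KN}=G^{(N)}$, where the special role of $N$ — the existence of an identity column — is essential: it is what upgrades the mere set-transitivity coming from matrix positivity into the genuine group-transitivity of $(3)$, and conversely lets group-transitivity be read back as positivity of the single power $M^{KN}$. Everything else reduces to the routine translation between letters appearing in supertiles, positive entries of $M^n$, and compositions of columns.
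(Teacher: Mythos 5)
Your proposal is correct, and its skeleton---the cycle of implications together with the dictionary between columns of $\varrho^n$, letters occurring in supertiles, and positive entries of $M^n$---matches the paper's proof. The genuine difference is in how $(3)\Rightarrow(1)$ is closed. The paper argues via Perron--Frobenius theory: transitivity of $G^{(N)}$ makes $M_{\varrho^N}$ irreducible (each group element yields a path in the digraph whose edges are given by the columns), the identity column of $\varrho^N$ gives a nonzero diagonal, and the standard fact that an irreducible nonnegative matrix with nonzero diagonal is primitive (cited to \cite[Ch.~2]{LMa}) finishes the job. You instead prove a stabilisation lemma: $\bigcup_k S_{kn}$ is a nonempty finite subset of a group closed under composition, hence a subgroup containing the generators, so it equals $G^{(n)}$; for $n=N$ the identity column makes $(S_{kN})_k$ increasing, so $S_{KN}=G^{(N)}$ for all large $K$, and transitivity of the group then literally \emph{is} positivity of $M^{KN}$. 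This buys you two things: the argument is self-contained (no appeal to the irreducible-plus-positive-diagonal criterion), and it makes explicit a point the paper glosses over---transitivity of $G^{(N)}$ a priori produces a word in the columns \emph{and their inverses}, and it is the finiteness of the group (exactly your semigroup-closure observation) that justifies the paper's claim that a composition of columns alone connects any $a$ to $b$. What the paper's route buys in exchange is brevity: irreducibility plus positive diagonal gives primitivity without having to identify a power $K$ at which the column sets saturate. Your $(1)\Rightarrow(2)$, via positivity of $M^{kn}$ for large multiples combined with the blocking inclusion $S_{kn}\subseteq G^{(n)}$, is essentially a matrix-language paraphrase of the paper's argument that $\varrho^{mk}(a)$ contains $\varrho^k(a)$ as a subword and that $G^{(nk)}\leqslant G^{(n)}$, so no substantive difference there; and your preliminary remark that $N$ is well defined (the $0$-th column of $\varrho^k$ is $\varrho_0^{\,k}$, which has finite order) is the same observation the paper makes in passing.
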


\begin{proof}
    Evidently, $(2)\implies(3)$, so we only need to prove $(3)\implies(1)\implies(2)$. To see the first implication, note first that the columns of the iterated substitution $\varrho^N$ are compositions of the form $\varrho^{ }_{j_1,\dotsc,j_N}:=\varrho^{ }_{j_1}\circ\cdots\circ\varrho^{ }_{j_N},0 \leqslant j_1,\dotsc,j_N \leqslant L-1$, that is, for any $a\in\mathcal{A}$ the following holds:
        \[\varrho^N(a) = \varrho^{ }_{0,\dotsc,0,0}(a)\varrho^{  }_{0,\dotsc,0,1}(a)\dotsc \varrho^{ }_{0,\dotsc,0,L-1}(a)\varrho^{ }_{0,\dotsc,1,0}(a)\dotsc\varrho^{ }_{L-1,\dotsc,L-1,L-1}(a).\]
    Since, by $(3)$, the group $G^{(N)}$ is transitive, the substitution matrix $M_{\varrho^N}$ is irreducible, i.e. it is the adjacency matrix of a strongly connected digraph. In other words, for all $a,b\in\mathcal{A}$, there exists a composition of columns $q,q',\dotsc,q''$ of $\varrho^N$ such that $q\circ q'\circ\cdots\circ q''(a) = b$, which may be identified with a path in the graph whose vertices are the letters of $\mathcal{A}$ and with one edge from $c$ to $r(c)$ for any $c\in\mathcal{A}$ and column $r$. The choice of $N$ also shows that $M_{\varrho^N}$ has a non-zero diagonal, since one of the columns of $\varrho^N$ is the identity. These two conditions immediately imply that $M_{\varrho^N}$ is a primitive matrix (see \cite[Ch.~2]{LMa}) which in turn implies primitivity of $\varrho$, as desired.
    
    To prove $(1)\implies(2)$, note that primitivity of $\varrho$ implies that, for some $k>0$ and for all $a\in\mathcal{A}$, the word $\varrho^k(a)$ contains all symbols of the alphabet $\mathcal{A}$, including $a$ itself. Since the columns of $\varrho^k$ generate $G^{(k)}$, this implies that for all $a,b\in\mathcal{A}$ there is some generator of this group that maps $a$ to $b$, i.e. $G^{(k)}$ is transitive. Since $\varrho^k(a)$ contains $a$ as a subword, this implies that $\varrho^{2k}(a)$ contains $\varrho^k(a)$ as a subword, and, by induction, that $\varrho^{mk}(a)$ contains $\varrho^k(a)$ as a subword for all $m\geqslant 1$; thus, all groups $G^{(mk)}$ are transitive. Now, it is easy to see that $G^{(n)}\leqslant G^{(d)}$ if $d\mid n$. Then, for all $n\in\mathbb{N}$, $G^{(n)}$ has $G^{(nk)}$ as a transitive subgroup and hence it is transitive.
\end{proof}

The bijective structure of $\varrho$ can also be exploited to conclude the aperiodicity of $\mathbb{X}_{\varrho}$ by just looking at simple  features of $\varrho$. Below, we provide several criteria for aperiodicity in terms of $|\mathcal{A}|$, $L$, and the existence of certain legal words.

\begin{proposition} \label{prop:periodic_conditions}
Let $\mathbb{X}_\varrho$ be the hull of a primitive, bijective substitution $\varrho$ of length $L$ on a finite alphabet $\mathcal{A}$. If $\mathbb{X}_\varrho$ is periodic with least period $p$, then it has to satisfy the following conditions:
\begin{enumerate}
    \item $|\mathcal{A}|$ divides $p$
    \item $L$ does not divide $|\mathcal{A}|$.     
\end{enumerate}
\end{proposition}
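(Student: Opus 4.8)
The plan is to reduce the statement to a finite, purely combinatorial picture and then dispatch the two conditions separately; condition~(1) is immediate from the letter frequencies, while condition~(2) carries the real content and will follow once $\varrho$ is recognised as a bijection of a finite orbit. First I would use primitivity to invoke minimality of $\mathbb{X}_\varrho$: if the hull contains a periodic point, its orbit is a finite, closed, shift-invariant set, so by minimality $\mathbb{X}_\varrho$ equals this single orbit, $\mathbb{X}_\varrho = \{x,\sigma x,\dots,\sigma^{p-1}x\}$ with $\sigma^p x = x$ and every point having least period exactly $p$. I fix the identification $\sigma^k x \leftrightarrow k$ of $\mathbb{X}_\varrho$ with $\mathbb{Z}/p\mathbb{Z}$. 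For condition~(1), Fact~\ref{fact:frequency_bij} gives that each $a\in\mathcal{A}$ occurs with frequency $\nu_a = 1/|\mathcal{A}|$; since in a configuration of least period $p$ this frequency equals $n_a/p$ for the number $n_a$ of occurrences of $a$ in one period window, the integrality of $n_a = p/|\mathcal{A}|$ yields $|\mathcal{A}|\mid p$.

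For condition~(2), the crucial observation is that bijectivity of the columns forces the substitution map $\varrho\colon\mathcal{A}^{\mathbb{Z}}\to\mathcal{A}^{\mathbb{Z}}$ to be injective: if $\varrho(x)=\varrho(y)$, then reading off position $Lk+j$ gives $\varrho_j(x_k)=\varrho_j(y_k)$, and bijectivity of each $\varrho_j$ forces $x=y$. Since $\varrho(\mathbb{X}_\varrho)\subseteq\mathbb{X}_\varrho$ and $\mathbb{X}_\varrho$ is finite, $\varrho$ restricts to a bijection of the $p$-element orbit. I would then use the intertwining relation $\varrho\circ\sigma = \sigma^L\circ\varrho$: writing $\varrho(x)=\sigma^c x$, one computes $\varrho(\sigma^k x)=\sigma^{Lk+c}x$, so under the identification above $\varrho$ acts on $\mathbb{Z}/p\mathbb{Z}$ as the affine map $k\mapsto Lk+c$. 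Such a map is bijective exactly when multiplication by $L$ is invertible modulo $p$, i.e.\ when $\gcd(L,p)=1$. Combined with $|\mathcal{A}|\mid p$ from condition~(1), this gives $\gcd(L,|\mathcal{A}|)=1$, and since $L>1$ this is incompatible with $L\mid|\mathcal{A}|$; hence $L\nmid|\mathcal{A}|$.

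I expect the main obstacle to lie entirely in setting up condition~(2) cleanly rather than in any computation: one must fix an alignment convention making $\varrho$ a genuine self-map of $\mathcal{A}^{\mathbb{Z}}$ that satisfies $\varrho\circ\sigma=\sigma^L\circ\varrho$, check that it preserves the hull, and---most importantly---leverage bijectivity of the columns to obtain injectivity. Once $\varrho$ is identified as a bijection of the finite orbit, the arithmetic constraint $\gcd(L,p)=1$ falls out automatically and both conditions follow with no further work.
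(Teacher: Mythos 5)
Your proposal is correct, and while condition (1) matches the paper's argument exactly (both derive $|\mathcal{A}|\mid p$ from the uniform frequencies of Fact~\ref{fact:frequency_bij} applied to one period window), your treatment of condition (2) takes a genuinely different and arguably cleaner route. The paper works with a specific periodic point $w^\infty$ fixed by a suitable power of $\varrho$, chooses the minimal pair $(c,d)$ with $cL=dp$, pulls the window $w^\infty\rvert_{[0,cL-1]}$ back through the substitution (using injectivity on letters) to conclude that $w^\infty$ is $c$-periodic, and then contradicts the minimality of $c$ via the bookkeeping in Eq.~\eqref{eq:length}, where $L$ and $p$ are written with the common factor $|\mathcal{A}|$. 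You instead use minimality to collapse the hull to a single $p$-cycle, note that column-bijectivity makes $\varrho$ injective on $\mathcal{A}^{\mathbb{Z}}$ and hence a bijection of this finite orbit, and exploit the intertwining $\varrho\circ\sigma=\sigma^L\circ\varrho$ to identify $\varrho$ with the affine map $k\mapsto Lk+c$ on $\mathbb{Z}/p\mathbb{Z}$, whose bijectivity forces $\gcd(L,p)=1$. The two arguments share the same engine---unique $\varrho$-preimages interacting with the period---and the paper's minimal-$c$ computation is essentially your statement $\gcd(L,p)=1$ in disguise (its preimage step shows $p\mid c$, while minimality gives $c=p/\gcd(L,p)$). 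Your packaging buys two things: it is strictly stronger, yielding $\gcd(L,p)=1$ and hence $\gcd(L,|\mathcal{A}|)=1$ explicitly, from which both $L\nmid|\mathcal{A}|$ (using the standing assumption $L>1$) and $|\mathcal{A}|\nmid L$ follow---note that the paper's computation, read literally with $L=b|\mathcal{A}|$ in Eq.~\eqref{eq:length}, refutes $|\mathcal{A}|\mid L$ rather than the stated $L\mid|\mathcal{A}|$, a gap your version closes for free; and it avoids the paper's slightly delicate step of producing a point of the orbit genuinely fixed by $\varrho^k$, since you never need fixed points, only that $\varrho(\mathbb{X}_\varrho)\subseteq\mathbb{X}_\varrho$ (immediate because $\mathcal{L}_\varrho$ is closed under $\varrho$). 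In a final write-up you should state those two housekeeping facts---hull invariance and $L>1$---explicitly, but as proposed the argument is complete.
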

\begin{proof}
From Fact~\ref{fact:frequency_bij} we know that every letter has the same frequency. An element of a periodic shift is just a concatenation of its periods and thus every letter has the same frequency in every period. This is only possible if every letter appears equally often in the period and thus the period length has to be a multiple of the alphabet size, which settles the first claim.

Let us assume that $w^\infty$ is a periodic word with $p$ as least period. Without loss of generality, choose a power $\varrho^k$ such that its first column is the identity, and so $w^\infty$ is fixed by $\varrho^k$. We choose $c$ and $d$ minimal such that:
\begin{align}\label{eq:length}
c L= d p \quad \Longleftrightarrow \quad c b |\mathcal{A}| = d a |\mathcal{A}|.    
\end{align}

We apply $\varrho^{-1}$ to $w^\infty|_{\left[0,cL-1\right]}$ of length $cL$, which has a unique pre-image in $\mathcal{L}$ since $\varrho$ is injective on letters, i.e., $\varrho(a)=\varrho(b)$ if and only if $a=b$. Then this segment must be of the form $x_1, \cdots x_c$. Applied to $w^\infty$, it yields $ \cdots x_c x_1, \cdots x_c x_1 \cdots $, which means that $w^\infty$ is $c$-periodic. 
Since $p$ is the least period $c=e p$ but then $c=e a| \mathcal{A}| $, making the factor $| \mathcal{A}|$ in Eq.~\eqref{eq:length} redundant, and thus contradicting the minimality of $c$. 
\end{proof}

Another way to get aperiodicity is through the existence of proximal pairs; see \cite[Sec.~3.2.1]{DDMP} and \cite[Cor.~4.2 and Thm.~5.1]{BG}. Two elements $x\neq y\in (\mathbb{X},\sigma)$ are said to be proximal if there exists a subsequence $\left\{n_k\right\}$ of $\mathbb{N}$ or $-\mathbb{N}$ such that $\dd(\sigma^{n_k}x,\sigma^{n_k}y)\to 0$ as $k\to\infty$. A stronger notion is that of asymptoticity, which requires 
$\dd(\sigma^{n}x,\sigma^{n}y)\to 0$ as $n\to\infty$ or $-\infty$. For bijective substitutions, these two notions are equivalent, and asymptotic pairs
are completely characterised by fixed points of $\varrho$; see \cite{KY}.

Consider a one-dimensional substitution $\varrho$ and a fixed point $w$ arising from a legal \emph{seed} $a|b$, i.e., $w=\varrho^{\infty}(a|b)$.  Here, the vertical bar represents the location of the origin, and the letter $a$ generates all the letters at the negative positions, while $b$ does the same for all non-negative ones. 
Two fixed points $w_1,w_2\in \mathbb{X}_{\varrho}$ generated by $a_1|b_1$ and $a_2|b_2$ are \emph{left-asymptotic} if they agree at all negative positions and disagree for all non-negative positions. Right-asymptotic pairs are defined in a similar manner. 
We have the following equivalent condition for aperiodicity in terms of existence of certain legal words; compare \cite[Prop.~4.1]{KY}

\begin{proposition}\label{prop: aperiodicity proximal pair}
Let $\varrho$ be a primitive, bijective substitution on a finite alphabet $\mathcal{A}$ in one dimension. Then the hull $\mathbb{X}_{\varrho}$ is aperiodic if and only if there exist distinct legal words of length \textnormal{2} which either share the same starting or ending letter.  
\end{proposition}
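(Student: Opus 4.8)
The plan is to prove the two implications separately, exploiting the dichotomy between ``deterministic'' and ``branching'' behaviour of the length-$2$ words. Write $S\subseteq\mathcal{A}\times\mathcal{A}$ for the set of legal words of length $2$. The negation of the stated condition says precisely that no two distinct elements of $S$ agree in either coordinate; equivalently, both coordinate projections are injective on $S$.

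For the (easier) direction that the condition is \emph{necessary}, I would argue the contrapositive: assuming no branching, $\mathbb{X}_\varrho$ is periodic. Indeed, injectivity of the first projection makes $S$ the graph of a partial map $f\colon\mathcal{A}\to\mathcal{A}$, while minimality (which follows from primitivity) together with extensibility of $\mathcal{L}$ shows every letter occurs and is right-extensible, so $f$ is total; injectivity of the second projection then makes $f$ a bijection of the finite set $\mathcal{A}$. Since every length-$2$ window $x_nx_{n+1}$ of any $x\in\mathbb{X}_\varrho$ lies in $S$, one gets $x_{n+1}=f(x_n)$ for all $n$, hence $x_n=f^{\,n}(x_0)$; as $f$ is a permutation, $x$ is periodic with period dividing the order of $f$, and $\mathbb{X}_\varrho$ is periodic.

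For the direction that the condition is \emph{sufficient} for aperiodicity, I would manufacture a proximal pair and invoke the fact that a periodic (hence finite, by minimality) subshift admits no nontrivial proximal pairs: along any sequence $n_k$ the pairs $(\sigma^{n_k}x,\sigma^{n_k}y)$ take only finitely many values, so $\dd(\sigma^{n_k}x,\sigma^{n_k}y)\to0$ forces $\sigma^{n_k}x=\sigma^{n_k}y$ and thus $x=y$. Suppose then that $ab$ and $ab'$ are legal with $b\neq b'$ (the case of a shared ending letter being symmetric). The key technical step is to turn this branching into genuine bi-infinite elements of the hull. I would pass to a power $\varrho^m$ with $m$ a common multiple of the orders of the permutations $\varrho_0$ and $\varrho_{L-1}$; since $(\varrho^m)_0=\varrho_0^{\,m}$ and $(\varrho^m)_{L^m-1}=\varrho_{L-1}^{\,m}$, both the first and last columns of $\varrho^m$ become the identity, while the hull, the primitivity, and the set of legal words are unchanged. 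Then the admissible seeds $a\,|\,b$ and $a\,|\,b'$ each generate two-sided fixed points $w=\varrho^{m\infty}(a\,|\,b)$ and $w'=\varrho^{m\infty}(a\,|\,b')$ lying in $\mathbb{X}_\varrho$; by construction they coincide on $(-\infty,-1]$ and satisfy $w_0=b\neq b'=w'_0$, so $(w,w')$ is a nontrivial (left-)asymptotic, in particular proximal, pair. This rules out periodicity.

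The only genuinely delicate point is this last construction: one must ensure the two branches really extend to distinct points of the hull sharing an entire half-line, which is exactly what the passage to a power with trivial boundary columns guarantees. Everything else---the follower-map argument and the absence of proximal pairs in finite systems---is routine.
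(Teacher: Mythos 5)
Your proof is correct and follows essentially the same route as the paper: pass to a power of $\varrho$ whose first and last columns are the identity (the paper takes $k=\operatorname{lcm}(|\varrho^{ }_0|,|\varrho^{ }_{L-1}|)$), use the branching seeds $a\,|\,b$ and $a\,|\,b'$ to produce a left-asymptotic, hence proximal, pair ruling out periodicity, and conversely observe that unique predecessors and successors make every point determined by its letter at the origin and hence periodic. Your write-up merely makes explicit two steps the paper leaves terse — that a minimal periodic hull is finite and admits no nontrivial proximal pairs, and that the follower map is a total bijection — but the argument is the same.
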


\begin{proof}
Let $\varrho:=\varrho^{ }_{0}\cdots\varrho^{ }_{L-1}$, with $\varrho_i\in G$. 
Choosing $k=\text{lcm}(|\varrho^{ }_0|,|\varrho^{ }_{L-1}|)$, we get that the first and the last columns of $\varrho^{k}$ are both the identity, i.e., $\varrho_0^{k}(a)=\varrho_{L^k-1}^{k}(a)=a$ for all $a\in\mathcal{A}$. If there exist $ab,ac\in\mathcal{L}_{\varrho}$ with $b\neq c$, the bi-infinite fixed points $\varrho^{\infty}(a|b)$ and $\varrho^{\infty}(a|c)$ they generate under $\varrho^{k}$ coincide in all negative positions and differ in at least one non-negative position, and  hence are left-asymptotic and proximal. Since $\mathbb{X}_{\varrho}$ is minimal and admits a proximal pair, all of its elements must then be aperiodic. 
Now suppose that every letter has a unique predecessor and successor in $\mathcal{A}$. This means that every element $x\in \mathbb{X}_{\varrho}$ is uniquely determined by the letter at the origin. From the finiteness of $\mathcal{A}$, one gets $x=w^{\infty}$ and hence is periodic, from which the periodicity of the hull follows.   
\end{proof}

\begin{example}
The substitution $\varrho\colon a\mapsto aba,b\mapsto bab$ is primitive, bijective and admits a periodic hull. Here, the only legal words of length $2$ are $ab$ and $ba$. Note that $\varrho$ is of height $2$ and generates the same hull as the substitution $\varrho^{\prime}\colon a,b\mapsto ab$.  \exend
\end{example}

\subsection{Symmetries}\label{sec: symmetries}

In the following sections, we deal with the \emph{symmetry groups} of our subshifts of interest, which are certain homeomorphisms of the shift space which preserve the dynamics of the shift action in a specific sense.

\begin{definition}
Let $\mathbb{X}$ be a $\mathbb{Z}^d$-subshift. The \emph{symmetry group} (often called \emph{automorphism group}\footnote{In this work, we follow the notational conventions of \cite{BRY}, and thus we avoid the term ``automorphism group'' as it may be understood as the set of all homeomorphisms $f\colon\mathbb{X}\to\mathbb{X}$.}) is the set $\mathcal{S}(\mathbb{X})$ of all homeomorphisms $\mathbb{X}\to\mathbb{X}$ which commute with the shift action, i.e.,     \begin{equation}\label{eq:dfn_of_symmetry}
    (\forall\boldsymbol{n}\in\mathbb{Z}^d)\colon \sigma_{\boldsymbol{n}}\circ f = f\circ\sigma_{\boldsymbol{n}}.
\end{equation}
\end{definition}

That is, $\mathcal{S}(\mathbb{X})$ is the centraliser of the set of shift maps in the group of all self-homeomorphisms of the space $\mathbb{X}$. In this context, every symmetry $f\in\mathcal{S}(\mathbb{X})$ is entirely determined by its \emph{local function}, which is a mapping $F\colon\mathcal{A}^U\to\mathcal{A}$, with $U\subset\mathbb{Z}^d$ finite, such that for every $\boldsymbol{n}\in\mathbb{Z}^d$, $f(x)_{\boldsymbol{n}} = F(x\rvert_{\boldsymbol{n}+U})$. This fact is known as the \emph{Curtis--Hedlund--Lyndon (or CHL) theorem}; see \cite{LMa}. We say that $f$ has \emph{radius} $r\geqslant 0$ if this is the least non-negative integer such that $U\subseteq[-r,r]^d$.

Symmetry groups of one-dimensional bijective substitutions are a thoroughly studied subject, both in the topological and ergodic-theoretical contexts. Complete characterisations of these groups are known, as seen in e.g. \cite{C} for a two-symbol alphabet, or \cite{LMe} for a characterisation in the measurable case; see also \cite{Frank2,CQY} for further elaboration in the description of the symmetries in this category of subshifts. The following theorem summarizes this classification:

\begin{theorem}
    \label{thm:symm_group}
    Let $\mathbb{X}_\varrho$ be the hull generated by an aperiodic, primitive, bijective substitution $\varrho$ on $\mathbb{Z}^d$. Then, the symmetry group $\mathcal{S}(\mathbb{X}_\varrho)$ is isomorphic to the  direct product of  $\mathbb{Z}^d$, generated by the shift action, with a finite group of radius-$0$ sliding block codes $\tau_\infty\colon\mathbb{X}_\varrho\to\mathbb{X}_\varrho$ given by $\tau_{\infty}((x^{}_{\boldsymbol{j}})^{}_{\boldsymbol{j}\in\mathbb{Z}^d})=(\tau(x^{}_{\boldsymbol{j}}))^{}_{\boldsymbol{j}\in\mathbb{Z}^d}$ for some bijection $\tau\colon\mathcal{A}\to\mathcal{A}$.
    
    Furthermore, let $N$ be any integer such that $\varrho_{\boldsymbol{j}}^N$ is the identity for some $\boldsymbol{j}$ (note that such an $N$ always exists). Then, $\tau\colon\mathcal{A}\to\mathcal{A}$ induces a symmetry if and only if $\tau\in\mathrm{cent}_{S_{\lvert\mathcal{A}\rvert}}G^{(N)}$. \qed 
\end{theorem}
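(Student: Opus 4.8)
The plan is to prove the two assertions in turn, relying throughout on two structural tools: the Curtis--Hedlund--Lyndon theorem, which presents every $f\in\mathcal{S}(\mathbb{X}_\varrho)$ as a sliding block code of some finite radius $r$, and the \emph{recognizability} of aperiodic primitive substitutions (see \cite{Q,KY}), which endows each $x\in\mathbb{X}_\varrho$ with a unique hierarchical decomposition into level-$k$ supertiles for every $k$ and realizes the maximal equicontinuous factor as the $L$-adic odometer $\pi\colon\mathbb{X}_\varrho\to\mathbb{O}$ (in higher dimensions, the $R$-adic $\mathbb{Z}^d$ odometer; the entire argument is dimension-agnostic once $L$ is replaced by the level-$N$ rectangle). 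Here aperiodicity is essential: it guarantees that $\pi$ is finite-to-one rather than collapsing onto a periodic orbit, so that the fibres carry the finite part of the symmetry group.

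For the structural statement, I would first show that every symmetry, after correction by a shift, respects the hierarchy, and then contract its radius. Writing $\phi(x)\in\{0,\dots,L-1\}$ for the offset of the origin within the level-$1$ supertile of $x$, a short computation using $\sigma_1$-equivariance shows that $\phi(f(x))-\phi(x)\bmod L$ is $\sigma$-invariant, hence constant $\equiv c$ by minimality; thus $f_0:=\sigma_{-c}\circ f$ preserves the level-$1$ cutting and descends, via recognizability, to a \emph{derived} symmetry $f^{(1)}:=\varrho^{-1}\circ f_0\circ\varrho$. The intertwining $\varrho\circ\sigma_1=\sigma_L\circ\varrho$ gives $f^{(1)}\in\mathcal{S}(\mathbb{X}_\varrho)$, and unpacking its local rule shows the radius is bounded by $(r+\kappa)/L+O(1)$, with $\kappa$ the recognizability radius. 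Iterating contracts the radius to a bounded value. I expect the main obstacle to be precisely the endgame: showing that this stabilized radius is $0$, equivalently that every symmetry covers an \emph{integer} translation on the odometer and hence factors as $f=\sigma_{\boldsymbol n}\circ\tau_\infty$ with $\tau_\infty$ of radius $0$. This is the genuinely rigid step, ruling out ``essential'' positive-radius symmetries; it is where the cited classifications \cite{C,CQY,LMe,Frank2} do their work, and it rests on the finite-to-one structure of $\pi$ together with the bijectivity of the columns.

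Granting this, the finite part $F$ consists exactly of the radius-$0$ codes $\tau_\infty$ that preserve $\mathbb{X}_\varrho$, and the second assertion identifies them. For sufficiency, if $\tau\in\mathrm{cent}_{S_{\lvert\mathcal{A}\rvert}}G^{(N)}$ then, writing $\boldsymbol n=L^N\boldsymbol m+\boldsymbol j$ and $g_{\boldsymbol j}:=\varrho^N_{\boldsymbol j}$, one checks directly that $(\tau_\infty\varrho^N x)_{\boldsymbol n}=\tau(g_{\boldsymbol j}(x_{\boldsymbol m}))$ equals $(\varrho^N\tau_\infty x)_{\boldsymbol n}=g_{\boldsymbol j}(\tau(x_{\boldsymbol m}))$ because $\tau$ commutes with every column $g_{\boldsymbol j}$ of $\varrho^N$; hence $\tau_\infty\circ\varrho^N=\varrho^N\circ\tau_\infty$, so $\tau_\infty$ maps any subpattern of $(\varrho^N)^k(a)$ to the corresponding subpattern of $(\varrho^N)^k(\tau(a))$ and therefore preserves $\mathcal{L}_\varrho$. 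Applying the same to $\tau^{-1}\in\mathrm{cent}(G^{(N)})$ yields $\tau_\infty(\mathbb{X}_\varrho)=\mathbb{X}_\varrho$, i.e.\ $\tau_\infty\in\mathcal{S}(\mathbb{X}_\varrho)$.

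For necessity, suppose $\tau_\infty\in\mathcal{S}(\mathbb{X}_\varrho)$. Since $\tau$ is a permutation of the finite set $\mathcal{A}$ it has finite order $d$, so $\tau_\infty^{\,d}=\mathrm{id}$, and as the odometer $\mathbb{O}$ is torsion-free the induced translation $\boldsymbol\beta$ satisfies $d\,\boldsymbol\beta=0$, forcing $\boldsymbol\beta=0$; thus $\tau_\infty$ covers the identity and preserves the level-$N$ cutting of every $x$. Consequently $\tau_\infty$ sends each level-$N$ supertile $\varrho^N(a)$ occurring in $x$ to a level-$N$ supertile $\varrho^N(b)$ occupying the same location. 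Evaluating at the distinguished column where, by the choice of $N$, $\varrho^N_{\boldsymbol j_0}=\mathrm{id}$ gives $b=\tau(a)$, and comparing the two readings of $\tau_\infty(x)$ on the whole supertile yields $g_{\boldsymbol j}(\tau(a))=\tau(g_{\boldsymbol j}(a))$ for every column $g_{\boldsymbol j}$ of $\varrho^N$. As every seed $a\in\mathcal{A}$ occurs by primitivity (Proposition~\ref{prop:group_primitivity}), this says $\tau$ commutes with all generators of $G^{(N)}$, i.e.\ $\tau\in\mathrm{cent}_{S_{\lvert\mathcal{A}\rvert}}G^{(N)}$. Finally, since radius-$0$ codes commute with every shift and meet $\langle\sigma_{\boldsymbol n}\rangle$ only in the identity (the shifts inject into $\mathbb{O}$), the decomposition $\mathcal{S}(\mathbb{X}_\varrho)\cong\mathbb{Z}^d\times F$ with $F\cong\mathrm{cent}_{S_{\lvert\mathcal{A}\rvert}}G^{(N)}$ follows, independently of the dimension $d$. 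One should also note that the choice of $N$ is immaterial, as $G^{(mN)}=G^{(N)}$ for all $m\geqslant 1$ once some column of $\varrho^N$ is the identity.
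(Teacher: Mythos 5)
Your proposal is correct and follows essentially the same route as the paper, which states this theorem as a summary of known classifications and proves it by the same sketch you give: a symmetry must preserve the supertile decomposition at every scale, the identity column of $\varrho^N$ forces the induced letter map on level-$N$ supertiles to satisfy $b=\tau(a)$, and membership $\tau\in\mathrm{cent}_{S_{\lvert\mathcal{A}\rvert}}G^{(N)}$ is then equivalent to $\varrho^N\circ\tau_\infty=\tau_\infty\circ\varrho^N$, with full details deferred to \cite{Frank2,CQY}. The one step you explicitly leave open---that every shift-corrected symmetry has radius $0$---is precisely the step the paper also outsources to those references, and it can be closed with the paper's own toolkit by the bijectivity argument used for reversors in the proof of Proposition~\ref{prop: reversor form}: once the level-$k$ cutting grids of $x$ and $f(x)$ are aligned by a shift (with $L^k$ exceeding the local window of $f$), the symbol $f(x)_{\boldsymbol{0}}$ is determined by the level-$k$ supertile at the origin, which, by bijectivity of the columns, is in turn determined by the single letter $x_{\boldsymbol{0}}$.
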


As a consequence, every symmetry on $\mathbb{X}_\varrho$ is a composition of a shift map and a radius-zero sliding block code as above. These conditions arise as a consequence of such a symmetry having to preserve the supertile structure of any $x\in\mathbb{X}_\varrho$ at every scale, which in particular implies that a level-$k$ supertile $\varrho^k(a),a\in\mathcal{A}$ has to be mapped to some $\varrho^k(b)$ for some other $b\in\mathcal{A}$ by the ``letter exchange map'' $\tau$. The choice of $N$ above ensures that, when $k$ is a multiple of $N$, the equality $a=b$ holds, which implies that $\tau$ commutes with the columns of $\varrho^N$, and thus $\varrho^N\circ\tau_\infty=\tau_\infty\circ\varrho^N$. This in turn implies Eq.~\eqref{eq:dfn_of_symmetry}. For further elaboration on the proof of the above result, the reader may consult \cite{Frank2,CQY}, among others.

\begin{example}
    \label{ex:cyclic_subst}
    Consider the following substitution $\varrho$ on the three-letter alphabet $\mathcal{A}=\{a,b,c\}$:
    \begin{align*}
        \varrho\colon a &\mapsto abc, \\
        b &\mapsto bca, \\
        c &\mapsto cab.
    \end{align*}
    The columns correspond to the three elements of the cyclic group generated by $\tau=(a\,b\,c)$. It is not hard to verify that the only elements of $S_3=D_3$ that commute with $\tau$ are the powers of $\tau$ themselves, and thus $\mathcal{S}(\mathbb{X}_\varrho)\simeq \mathbb{Z}\times C_3$, with the finite subgroup $C_3$ being generated by the symmetries induced by the powers of $\tau$. \exend
\end{example}

As it turns out, Theorem \ref{thm:symm_group} provides an algorithm to compute $\mathcal{S}(\mathbb{X}_\varrho)$ explicitly. To introduce this algorithm, let us recall some easily verifiable facts from group theory \cite[Ch.~1 and ~5]{Hall}:
\begin{fact} 
    \label{fact:centraliser_by_generators}
    Let $G$ be any group and $H=\langle S\rangle \leqslant G$ a subgroup generated by $S\subset G$. Then,
    \begin{equation*}\label{eq:cent_by_gen_formula}
    \pushQED{\qed}
        \mathrm{cent}_G(H)=\{c\in G\:|\:(\forall h\in H)\colon ch=hc\}=\bigcap_{s\in S} \mathrm{cent}_G(s). \qedhere
        \popQED
    \end{equation*} 
\end{fact}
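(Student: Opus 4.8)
The plan is to establish the stated equality by proving the two inclusions $\mathrm{cent}_G(H) \subseteq \bigcap_{s\in S}\mathrm{cent}_G(s)$ and $\bigcap_{s\in S}\mathrm{cent}_G(s) \subseteq \mathrm{cent}_G(H)$ separately. The first inclusion is immediate: since every generator $s\in S$ lies in $H=\langle S\rangle$, any element $c$ commuting with all of $H$ commutes in particular with each $s$, and hence lies in the intersection of the individual centralisers.

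For the reverse inclusion, the key observation I would exploit is that, for a fixed $c\in G$, the set $\mathrm{cent}_G(c)=\{g\in G: gc=cg\}$ is a subgroup of $G$: it contains the identity, it is closed under products since $g_1 c = c g_1$ and $g_2 c = c g_2$ give $(g_1 g_2)c = c(g_1 g_2)$, and it is closed under inverses since $gc=cg$ implies $cg^{-1}=g^{-1}c$. Now suppose $c\in\bigcap_{s\in S}\mathrm{cent}_G(s)$; this says precisely that $c$ commutes with each $s$, and since the commuting relation is symmetric, equivalently each $s$ commutes with $c$, i.e.\ $S\subseteq\mathrm{cent}_G(c)$. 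As $\mathrm{cent}_G(c)$ is a subgroup containing $S$, it must contain the subgroup that $S$ generates, namely $\langle S\rangle = H$. Therefore every $h\in H$ commutes with $c$, which is exactly the statement that $c\in\mathrm{cent}_G(H)$.

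There is no genuine obstacle here; the only point worth flagging is that writing a generic $h\in H$ as a word $s_1^{\pm1}\cdots s_n^{\pm1}$ in the generators and verifying the commutation $ch=hc$ by hand would force one to treat inverses and products of generators through a separate induction on word length. Invoking the subgroup structure of the single-element centraliser $\mathrm{cent}_G(c)$ absorbs both of these bookkeeping steps at once, so I would phrase the argument in that form rather than by an explicit word-length induction.
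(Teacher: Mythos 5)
Your proof is correct: the forward inclusion is immediate, and your reverse inclusion via the observation that $\mathrm{cent}_G(c)$ is a subgroup containing $S$ (hence containing $\langle S\rangle = H$) is the standard, clean argument that avoids induction on word length. The paper itself states this as a Fact with no proof, citing Hall's group theory text, so your write-up simply supplies the routine verification the authors chose to omit, and it does so correctly.
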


\begin{fact}
    \label{fact:permutation_groups_rep}
    Any permutation decomposes uniquely (up to reordering) as a product of disjoint cycles. Conjugation by some $\tau\in S_n$ can be computed from this decomposition using the identity:
        \[\tau(a_1\,a_2\,\dotsc\,a_n)\tau^{-1} = (\tau(a_1)\,\tau(a_2)\,\dotsc\,\tau(a_n)). \]
    A permutation $\tau\in S_n$ belongs to $\mathrm{cent}_{S_n}(\pi)$ if and only if $\tau\pi\tau^{-1}=\pi$, and thus:
        \begin{align*}
            \pi &=  (a_1\,a_2\,\dotsc\,a_{k_1})(b_1\,b_2\,\dotsc\,b_{k_2})\cdots(c_1\,c_2\,\dotsc\,c_{k_r})\\
            &=  (\tau(a_1)\,\tau(a_2)\,\dotsc\,\tau(a_{k_1}))(\tau(b_1)\,\tau(b_2)\,\dotsc\,\tau(b_{k_2}))\cdots(\tau(c_1)\,\tau(c_2)\,\dotsc\,\tau(c_{k_r})).
        \end{align*}
    Hence, the uniqueness of this decomposition implies that every cycle in the second decomposition is equal to a cycle of the same length in the first one. \qed
\end{fact}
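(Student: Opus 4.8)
The plan is to prove the three constituent assertions in turn---uniqueness of the cycle decomposition, the single-cycle conjugation identity, and the centraliser criterion---and then assemble them into the final claim about cycles of equal length.

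For the \emph{uniqueness of the cycle decomposition}, I would introduce the equivalence relation on $\{1,\dots,n\}$ defined by $i\sim j$ if and only if $j=\pi^{m}(i)$ for some $m\in\mathbb{Z}$. The equivalence classes are precisely the orbits of the cyclic group $\langle\pi\rangle$, so they partition $\{1,\dots,n\}$; restricting $\pi$ to an orbit $\{i,\pi(i),\dots,\pi^{k-1}(i)\}$ of size $k$ exhibits it as a $k$-cycle, and the product of these pairwise disjoint cycles reconstructs $\pi$. Uniqueness (up to reordering the factors and cyclically rotating within each cycle) follows because the orbits, together with the action of $\pi$ on each of them, are intrinsically determined by $\pi$.

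For the \emph{conjugation identity}, writing $c=(a_1\,\dots\,a_n)$, I would verify directly that $\tau c\tau^{-1}$ sends $\tau(a_i)\mapsto\tau(a_{i+1})$, with indices read cyclically, and fixes every point lying outside $\{\tau(a_1),\dots,\tau(a_n)\}$; this is exactly the cycle $(\tau(a_1)\,\dots\,\tau(a_n))$. Since $x\mapsto\tau x\tau^{-1}$ is an automorphism of $S_n$, it distributes over the product of disjoint cycles, which yields precisely the displayed second decomposition of $\pi$ in the statement.

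The \emph{centraliser criterion} $\tau\in\mathrm{cent}_{S_n}(\pi)\iff\tau\pi\tau^{-1}=\pi$ is immediate from the definition of commuting elements. Combining the three steps: if $\tau$ centralises $\pi$, then conjugation sends the cycle decomposition of $\pi$ to one whose factors are the $\tau$-images of the original cycles, and by the criterion this decomposition must represent $\pi$ itself. By the uniqueness established in the first step, the multiset of image cycles equals the multiset of original cycles; and since conjugation preserves cycle length, each $\tau$-image cycle must coincide with one of the cycles of $\pi$ of the same length. This is the asserted conclusion that $\tau$ permutes the cycles of $\pi$ within each length class. As everything here is elementary, I expect no genuine obstacle; the only points demanding care are the cyclic index bookkeeping in the conjugation formula and a precise reading of ``unique up to reordering.''
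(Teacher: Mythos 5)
Your proposal is correct in all three parts (orbit-based uniqueness of the disjoint-cycle decomposition, direct verification of the conjugation identity, and the definitional centraliser criterion), and the assembly into the final claim is sound. The paper itself gives no proof of this Fact---it is stated with a \verb|\qed| and deferred to the standard reference \cite{Hall}---and your argument is precisely the standard one that reference supplies, so there is nothing to reconcile.
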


Thus, to compute the letter exchange maps that determine $\mathcal{S}(\mathbb{X}_\varrho)$, we need to find all permutations $\tau$ that preserve certain cycle decompositions. We obtain the following procedure:
\vspace{1em}
\hrule
\vspace{0.3em}
{\noindent\small\textbf{Algorithm.} Assuming that $\varrho$ is a primitive, bijective, aperiodic substitution, the following algorithm computes $\mathcal{S}(\mathbb{X}^{ }_\varrho)$ explicitly.
\vspace{1em}
\begin{itemize}
    \item \textbf{Input:} $\varrho$ is a length-$L$ bijective substitution, which may be represented as a function (dictionary) $\varrho\colon\mathcal{A}\to\mathcal{A}^L$ or a set of $L$ permutations $\varrho_0,\varrho_1,\dotsc,\varrho_{L-1}\colon\mathcal{A}\to\mathcal{A}$, corresponding to each column.
    \item \textbf{Output:} A (finite) set of permutations $C$ forming a group, so that $\mathcal{S}(\mathbb{X}_\varrho)=\mathbb{Z}^d\times C$.
\end{itemize}
\vspace{1em}
\begin{itemize}
    \item[(1)] Compute the least positive integer $N$ such that $\varrho_{\boldsymbol{j}}^N$ is the identity on $\mathcal{A}$ for some column of the substitution $\varrho$. $N$ equals the least common multiple of all cycle lengths in the decomposition of the columns $\varrho_{\boldsymbol{j}}$ into disjoint cycles (and is thus finite).
    \item[(2)] Determine all columns $\varrho^{ }_{\boldsymbol{j}_1}\circ\cdots\circ\varrho^{ }_{\boldsymbol{j}_N}$ of the iterated substitution $\varrho^N$. This is a generating set for the group $G^{(N)}$.
    \item[(3)] For every column computed in (2), compute $G_{\boldsymbol{j}_1,\dotsc,\boldsymbol{j}_N}=\mathrm{cent}_{S_n}(\varrho^{ }_{\boldsymbol{j}_1}\circ\cdots\circ\varrho^{ }_{\boldsymbol{j}_N})$ by taking the cycle decomposition of this permutation (in where we identify $\mathcal{A}$ with the set $\{1,2,\dotsc,\left|\mathcal{A}\right|\}$) and employing the characterisation above.
    \item[(4)] Let $C = \bigcap_{\boldsymbol{j}_1,\dotsc,\boldsymbol{j}_n}G_{\boldsymbol{j}_1,\dotsc,\boldsymbol{j}_N}$. As $C$ can be biunivocally identified with the set of valid letter exchange maps modulo a shift, return $\mathcal{S}(\mathbb{X}_\varrho) = \mathbb{Z}^d\times C$ as output.
\end{itemize}}
\hrule
\vspace{1em}

Example \ref{ex:cyclic_subst} above corresponds to a simple case in which $G^{(N)}=G^{(1)}$ is a cyclic group, and we derive an abelian subgroup of $S_3$ corresponding to the valid letter exchange maps. We can use the above procedure to construct examples with more complicated symmetry groups, see Example~\ref{ex:quaternions}. 

\begin{example}
    \label{ex:quaternions}
    We take as alphabet the \emph{quaternion group} $Q=\{e,i,j,k,\bar{e},\bar{\imath},\bar{\jmath},\bar{k}\}$ (see \cite{Hall} for the multiplication table and basic properties of this group, which is generated by the two elements $i$ and $j$). With this, we construct a length-$3$ bijective substitution defined by right multiplication, $x\mapsto (x\cdot i)(x\cdot j)(x\cdot k)$, given in full by:
    \begin{align*}
        e &\mapsto ijk, & \bar{e} &\mapsto \bar{\imath}\bar{\jmath}\bar{k}, \\
        i &\mapsto\bar{e}k\bar{\jmath},& \bar{\imath} &\mapsto e\bar{k}j, \\
        j &\mapsto\bar{k}\bar{e}i, &
        \bar{\jmath} &\mapsto ke\bar{\imath},\\
        k &\mapsto j\bar{\imath}\bar{e}, &
        \bar{k} &\mapsto \bar{\jmath}ie.
    \end{align*}
    By direct computation, $G^{(n)}=G^{(1)}\simeq Q$ for all $n$, making the substitution primitive (as $Q$ acts transitively on itself in an obvious way). The three columns which generate $G^{(1)}$ are:
        \begin{align*}
            R_i & := (e\,i\,\bar{e}\,\bar{\imath})(j\,\bar{k}\,\bar{\jmath}\,k), \\
            R_j & := (e\,j\,\bar{e}\,\bar{\jmath})(i\,k\,\bar{\imath}\,\bar{k}), \\
            R_k & := (e\,k\,\bar{e}\,\bar{k})(j\,i\,\bar{\jmath}\,\bar{\imath}).
        \end{align*}

    Thus, the substitution $\varrho^3$ has as columns $R_{xyz}(g)=g\cdot xyz$ with $x,y,z\in\{i,j,k\}$; in particular, since $jik=e$, $\varrho^3$ must have an identity column. Also, since $G^{(3)}=G^{(1)}$, this group is the right Cayley embedding of $Q$ into $S_8$. By applying the above algorithm, we obtain that the group of letter exchange maps is generated by the following two permutations:
        \begin{align*}
            \pi_0 & := (e\,i\,\bar{e}\,\bar{\imath})(j\,k\,\bar{j}\,\bar{k}), \\
            \pi_1 & := (e\,j\,\bar{e}\,\bar{\jmath})(i\,\bar{k}\,\bar{i}\,k).
        \end{align*}
    We can verify that these permutations generate the \emph{left} Cayley embedding of $Q$ into $S_8$. Alternatively, if we consider the transposition $\nu=(k\,\bar{k})$, we can use Fact \ref{fact:permutation_groups_rep} above to see that $\pi_0 = \nu R_i\nu^{-1}$ and $\pi_1 = \nu R_{j}\nu^{-1}$, which in turn implies that the group generated by $\pi_0$ and $\pi_1$ is conjugate to the group generated by $R_i$ and $R_j$, the latter being isomorphic to $Q$. This shows that $\mathcal{S}(\mathbb{X}_\varrho)\simeq\mathbb{Z}\times Q$. \exend
\end{example}

It is well known that symmetry groups of aperiodic minimal one-dimensional subshifts are virtually $\mathbb{Z}$. The following result gives a full converse for shifts generated by bijective substitutions. 

\begin{theorem}[{\cite[Thm.~3.6]{DDMP}}]\label{thm: DDMP-result}
For any finite group $G$, there exists an explicit primitive, bijective substitution $\varrho$, on an alphabet on $\left|G\right|$ letters, such that $\mathcal{S}(\mathbb{X}_\varrho)\simeq\mathbb{Z} \times G$. \qed
\end{theorem}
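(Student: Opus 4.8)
The plan is to realise $G$ through its regular representation, choosing the alphabet and the length of the substitution so that the column structure is exactly a copy of $G$ sitting inside $S_{|G|}$, and then to invoke Theorem~\ref{thm:symm_group} together with the classical fact that the centraliser of the regular representation is the opposite regular representation. The point of matching $|\mathcal{A}|$ to $|G|$ is precisely that it pins the centraliser down to be no larger than $G$.

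Concretely, I would set $\mathcal{A}=G$ (identifying letters with group elements), take $L=|G|\geqslant 2$, fix an enumeration $g^{(0)}=e,g^{(1)},\dotsc,g^{(L-1)}$ of $G$ with $g^{(0)}$ the identity, and define the explicit substitution
\[
\varrho(x) = (x\, g^{(0)})(x\, g^{(1)})\cdots (x\, g^{(L-1)}), \qquad x\in G,
\]
so that its $j$-th column is the right-multiplication $\varrho_j = R_{g^{(j)}}\colon x\mapsto x\, g^{(j)}$. Each column is a bijection of $\mathcal{A}$, so $\varrho$ is bijective, and since $R_a\circ R_b = R_{ba}$ the first column group is $G^{(1)}=\langle R_{g^{(0)}},\dotsc,R_{g^{(L-1)}}\rangle = R_G$, the right regular representation, which is isomorphic to $G$. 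Because $g^{(0)}=e$, the zeroth column is already the identity, so one may take $N=1$ and $G^{(N)}=R_G$.

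With the construction in place, the standing hypotheses of Theorem~\ref{thm:symm_group} are quickly verified. Primitivity follows from Proposition~\ref{prop:group_primitivity}: the regular action of $R_G$ on $G$ is transitive, so $G^{(1)}$ is transitive. Aperiodicity is immediate from Proposition~\ref{prop:periodic_conditions}(2): here $L=|\mathcal{A}|=|G|$, so $L$ divides $|\mathcal{A}|$, which is incompatible with periodicity; hence $\mathbb{X}_\varrho$ is aperiodic. (Alternatively, one exhibits two distinct legal words of length $2$ sharing a letter, as in Proposition~\ref{prop: aperiodicity proximal pair} and Example~\ref{ex:cyclic_subst}.) Theorem~\ref{thm:symm_group} then yields $\mathcal{S}(\mathbb{X}_\varrho)\cong\mathbb{Z}\times C$ with $C=\mathrm{cent}_{S_{|G|}}(G^{(N)})=\mathrm{cent}_{S_{|G|}}(R_G)$.

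The crux is therefore the centraliser computation. I would show $\mathrm{cent}_{S_{|G|}}(R_G)=L_G$, the left regular representation $h\mapsto L_h$, $L_h(x)=hx$. Indeed $L_h$ and $R_g$ always commute by associativity, giving $L_G\subseteq\mathrm{cent}_{S_{|G|}}(R_G)$; conversely, if $c\in S_{|G|}$ commutes with every $R_g$, then evaluating $c\circ R_g = R_g\circ c$ at $e$ gives $c(g)=c(e)\,g$, so $c=L_{c(e)}\in L_G$. Since $h\mapsto L_h$ is an injective homomorphism, $C=L_G\cong G$, and we conclude $\mathcal{S}(\mathbb{X}_\varrho)\cong\mathbb{Z}\times G$. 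The only genuinely delicate point is this final lemma — forcing the centraliser to be \emph{exactly} $G$ rather than something larger — but for the regular representation it is classical and short; everything else is routine bookkeeping to check the hypotheses of Theorem~\ref{thm:symm_group} (with the trivial group $|G|=1$ being an excluded degenerate case, as it fails $L>1$).
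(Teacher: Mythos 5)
Your proposal is correct and takes essentially the same route as the paper, which cites \cite{DDMP} and sketches the schema in Example~\ref{ex:quaternions} and the proof of Theorem~\ref{thm: G and P construction}: columns drawn from one regular (Cayley) representation of $G$, Theorem~\ref{thm:symm_group} reducing $\mathcal{S}(\mathbb{X}_\varrho)$ to a centraliser computation, and the classical fact $\mathrm{cent}_{S_{|G|}}(R_G)=L_G\simeq G$. The only minor deviations — using all of $G$ (including the identity, so $N=1$) as columns of length $L=|G|$ instead of a generating set without the identity, the aperiodicity shortcut via Proposition~\ref{prop:periodic_conditions}(2) with $L=|\mathcal{A}|$, and the explicit exclusion of the degenerate case $|G|=1$ — are all sound and do not change the substance of the argument.
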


The proof, which may be consulted in \cite{DDMP}, follows a similar schema to the analysis done in Example \ref{ex:quaternions} above. In \cite[Sec.~4.1]{Frank2}, it was shown that the number of letters needed in Theorem~\ref{thm: DDMP-result} is actually a tight lower bound. Below, we actually prove something stronger.

\begin{theorem}\label{thm:alphabet size limitation}
    Let $\varrho$ be an aperiodic, primitive, bijective substitution on the alphabet $\mathcal{A}$. If $\mathcal{S}(\mathbb{X}_\varrho)\simeq\mathbb{Z}\times G$, then $G$ must act freely on $\mathcal{A}$, and the order of $G$ has to divide $\left|\mathcal{A}\right|$.
\end{theorem}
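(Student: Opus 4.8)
The plan is to read the finite part of the symmetry group off Theorem~\ref{thm:symm_group} and then invoke the classical fact that the centraliser of a transitive permutation group is semiregular. Since we are in dimension $d=1$, Theorem~\ref{thm:symm_group} gives $\mathcal{S}(\mathbb{X}_\varrho)\simeq\mathbb{Z}\times C$, where $C=\mathrm{cent}_{S_{|\mathcal{A}|}}(G^{(N)})$ is realised concretely as a group of radius-$0$ letter-exchange bijections of $\mathcal{A}$, i.e. as a subgroup of $S_{|\mathcal{A}|}$. Comparing with the hypothesis $\mathcal{S}(\mathbb{X}_\varrho)\simeq\mathbb{Z}\times G$ and passing to torsion subgroups (the torsion subgroup of $\mathbb{Z}\times H$ is $H$ for any finite $H$), I would first conclude $G\simeq C$, so that $G$ inherits a faithful permutation action on $\mathcal{A}$ via this identification. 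It is the freeness of this action that I must establish.

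First I would record that, by primitivity and Proposition~\ref{prop:group_primitivity}, the structure group $G^{(N)}$ acts transitively on $\mathcal{A}$. The central step is then the observation that any permutation commuting with a transitive group acts freely: suppose $\tau\in C=\mathrm{cent}_{S_{|\mathcal{A}|}}(G^{(N)})$ fixes some letter $a\in\mathcal{A}$. Given an arbitrary $b\in\mathcal{A}$, transitivity provides $h\in G^{(N)}$ with $h(a)=b$, and then $\tau(b)=\tau(h(a))=h(\tau(a))=h(a)=b$, using $\tau h=h\tau$. Hence $\tau$ fixes every letter, so $\tau=\mathrm{id}$; equivalently, no non-trivial element of $C\simeq G$ fixes a point, which is exactly freeness of the action.

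Finally I would translate freeness into the divisibility claim by a standard orbit count: a free action of the finite group $G$ on $\mathcal{A}$ has trivial point stabilisers, so by the orbit–stabiliser theorem every orbit has cardinality $|G|$, and $\mathcal{A}$ decomposes as a disjoint union of such orbits. Therefore $|\mathcal{A}|$ is a multiple of $|G|$, giving $|G|\mid|\mathcal{A}|$.

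I do not anticipate a serious obstacle: the argument is essentially the combination of the already-cited structure theorem with the semiregularity of centralisers of transitive groups. The only points requiring care are the bookkeeping identifications — that the finite factor $C$ of Theorem~\ref{thm:symm_group} is genuinely realised as permutations of $\mathcal{A}$ (so that the phrase ``$G$ acts on $\mathcal{A}$'' even makes sense), and that the abstract isomorphism $\mathbb{Z}\times G\simeq\mathbb{Z}\times C$ does force $G\simeq C$ as groups. Both are routine, the latter via the torsion-subgroup remark above.
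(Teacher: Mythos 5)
Your proposal is correct, and it is a genuine streamlining of the paper's argument rather than a reproduction of it. The paper works directly with inflation words: after passing to a power $\varrho^q$ so that $\varrho^q(a)$ begins with $a$ and contains every letter, it proves freeness from the observation that bijectivity forces $\varrho^q(a)$ and $\varrho^q(b)$ to agree nowhere unless $a=b$ (so a letter-exchange with one fixed point fixes everything), and then shows separately that all $G$-orbits in $\mathcal{A}$ have the same cardinality by using a column composition $\varrho^*$ as a $G$-equivariant bijection, before invoking orbit--stabiliser. You instead identify $G$ with $\mathrm{cent}_{S_{|\mathcal{A}|}}(G^{(N)})$ via Theorem~\ref{thm:symm_group} (with the torsion-subgroup step correctly handling the passage from the abstract isomorphism $\mathbb{Z}\times G\simeq\mathbb{Z}\times C$ to $G\simeq C$ --- a point the paper glosses over), take transitivity of $G^{(N)}$ from Proposition~\ref{prop:group_primitivity}, and quote the classical fact that the centraliser of a transitive permutation group is semiregular, proving it inline. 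The underlying mechanism is the same in both proofs --- commuting with a transitive family of maps transfers fixed points from one letter to all letters --- but your packaging has two advantages: it bypasses the substitution-word combinatorics entirely (no replacement of $\varrho$ by a power is needed), and it exposes a redundancy in the paper's proof, since once freeness is established every orbit automatically has cardinality $|G|$ by orbit--stabiliser, so the equal-orbit-size argument via $\varrho^*$ is superfluous. What the paper's more hands-on route buys in exchange is the explicit auxiliary fact that a symmetry is determined by the image of a single letter (whence $|G|\leqslant|\mathcal{A}|$ directly), which is of independent interest but not needed for the divisibility claim.
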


\begin{proof}
    As seen in \cite[Sec.~4.1]{Frank2}, if we replace $\varrho$ with a suitable power, we may ensure that the word $\varrho^q(a)$ starts with $a$ and contains every other symbol, for all $a\in\mathcal{A}$. Thus, for any $\pi\in S_n$, the equality $\pi(a)=b$ implies $\pi(\varrho^q(a))=\varrho^q(b)$, which in turn determines the images of every symbol in the alphabet; the bound $\lvert G\rvert\leqslant\lvert\mathcal{A}\rvert$ follows from here.

    Note as well that, since $\varrho$ is bijective, if $\pi(a)\ne a$, then $\pi(c)\ne c$ for every $c\in\mathcal{A}$ as the words $\varrho^q(a)$ and $\varrho^q(b)$ are either equal or differ at every position. This implies that if $\pi$ has any fixed point then it must be the identity, i.e. that, if we identify $G$ with the corresponding group of permutations over $\mathcal{A}$, the action of $G$ on the alphabet is free. Equivalently, the stabiliser $\mathrm{Stab}(c)$ of any $c\in\mathcal{A}$ is the trivial subgroup.
    
   The elements of $G$ commute with every column of $\varrho^q$. Due to primitivity, there always exists a column $\varrho^*=\varrho^{ }_{\boldsymbol{j}_1}\circ\cdots\circ\varrho^{ }_{\boldsymbol{j}_q}$ which maps this $a$ to any desired $c\in\mathcal{A}$. Since $\varrho^*$ commutes with every $\pi\in G$ (i.e., it is an equivariant bijection for the action of $G$ on $\mathcal{A}$), we have that $\mathrm{Orb}(c) = \varrho^*[\mathrm{Orb}(a)]$, i.e. the orbit of $c$ under $G$ is necessarily the image of the orbit of $a$ under $\varrho^*$. 
  
    Thus, every orbit is a set of the same cardinality. This means that $G$ induces a partition of $\mathcal{A}$ into disjoint orbits of the same cardinality $\ell$, which then must divide $\left|\mathcal{A}\right|$. By the freeness of the group action and the orbit-stabiliser theorem, we have $\lvert G\rvert = \lvert\mathrm{Orb}(a)\rvert\cdot\lvert\mathrm{Stab}(a)\rvert = \ell$, and thus $\lvert G\rvert$ divides $\lvert\mathcal{A}\rvert$.
\end{proof}

\begin{remark}
It follows from Theorem~\ref{thm:alphabet size limitation} that the substitution in Example~\ref{ex:quaternions} is a minimal one in the sense that for one to get a $Q$-extension in $\mathcal{S}(\mathbb{X}_{\varrho})$, one needs at least eight letters. \exend
\end{remark}

\begin{remark}\label{rem: direct multidimensional generalization}
    At no point in the proof of Theorem~\ref{thm: DDMP-result} found in \cite{DDMP} nor in Theorem~\ref{thm:alphabet size limitation} above the fact that the substitution was one-dimensional is actually used. Thus, since Theorem \ref{thm:symm_group} is known to be valid for general rectangular substitutions, the two theorems above must be valid in this more general setting as well, provided that the substitution is aperiodic in $\mathbb{Z}^d$, which one can always guarantee; see Propositions~\ref{prop: aperiodicity proximal pair} and ~\ref{prop: aperiodicity HD proximal}. 
    \exend
\end{remark}

\begin{corollary}
    For any finite group $G$, there exists an explicit primitive, bijective $d$-dimensional rectangular substitution $\varrho$, on an alphabet of $\left|G\right|$ letters, such that $\mathcal{S}(\mathbb{X}_\varrho)\simeq\mathbb{Z}^d\times G$. Furthermore, this is the least possible alphabet size: for any bijective, primitive and aperiodic $d$-dimensional rectangular substitution $\varrho$ on the alphabet $\mathcal{A}$, if $\mathcal{S}(\mathbb{X}_\varrho)\simeq\mathbb{Z}^d\times G$, then $G$ acts freely on $\mathcal{A}$, and $\left|G\right|$ divides $\left|\mathcal{A}\right|$. \qed
\end{corollary}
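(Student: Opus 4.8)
The statement splits into an existence claim and a matching lower bound, and both reduce to their one-dimensional counterparts via the dimension-independence recorded in Remark~\ref{rem: direct multidimensional generalization}. The plan is to adapt the construction behind Theorem~\ref{thm: DDMP-result} to a $d$-dimensional rectangular substitution built from the regular representation of $G$, read off the symmetry group from Theorem~\ref{thm:symm_group}, and then obtain the lower bound as an immediate consequence of Theorem~\ref{thm:alphabet size limitation}.

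For the construction I would identify the alphabet with the group, $\mathcal{A}=G$, and fix a cubic shape $R=[0,L-1]^d$ with $L$ large enough that the columns may be prescribed freely. To each position $\boldsymbol{j}\in R$ assign an element $g_{\boldsymbol{j}}\in G$ so that the family $\{g_{\boldsymbol{j}}\}$ generates $G$ and contains the identity $e$, and define the columns by the right multiplications $\varrho_{\boldsymbol{j}}(x)=x\,g_{\boldsymbol{j}}$. Each column is a bijection of $G$, so $\varrho$ is bijective; since $\{g_{\boldsymbol{j}}\}$ generates $G$ and the right regular action of $G$ on itself is transitive, every column group is transitive and hence $\varrho$ is primitive by Proposition~\ref{prop:group_primitivity}. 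Aperiodicity of $\mathbb{X}_\varrho$ can be arranged using the freedom in the choice of $L$ together with the criterion of Proposition~\ref{prop: aperiodicity HD proximal} (equivalently, by fixing an aperiodic one-dimensional DDMP substitution and transplanting its columns onto the cube).

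The symmetry group then follows from Theorem~\ref{thm:symm_group}, which gives $\mathcal{S}(\mathbb{X}_\varrho)\simeq\mathbb{Z}^d\times\mathrm{cent}_{S_{|G|}}(G^{(N)})$. Because every column is a right multiplication, the identity $\varrho_{\boldsymbol{j}}\circ\varrho_{\boldsymbol{j}'}=R_{g_{\boldsymbol{j}'}g_{\boldsymbol{j}}}$ shows that $G^{(N)}$ is exactly the right regular representation $\{x\mapsto xg : g\in G\}$ of $G$ inside $S_{|G|}$. Its centraliser in $S_{|G|}$ is the left regular representation $\{x\mapsto gx : g\in G\}$ --- this is the standard fact that the two regular representations are mutual centralisers --- which is isomorphic to $G$. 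Hence $\mathcal{S}(\mathbb{X}_\varrho)\simeq\mathbb{Z}^d\times G$ over an alphabet of exactly $|G|$ letters, this being the same computation carried out in Example~\ref{ex:quaternions}, now over an arbitrary group and an arbitrary dimension.

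Finally, the lower bound is a direct application of Theorem~\ref{thm:alphabet size limitation}: by Remark~\ref{rem: direct multidimensional generalization} its proof never uses $d=1$, so for any aperiodic primitive bijective rectangular $\varrho$ with $\mathcal{S}(\mathbb{X}_\varrho)\simeq\mathbb{Z}^d\times G$ the group $G$ acts freely on $\mathcal{A}$ and $|G|$ divides $|\mathcal{A}|$; in particular $|\mathcal{A}|\geqslant|G|$, so $|G|$ letters is optimal. I expect the only genuinely non-routine point to be guaranteeing aperiodicity of the $d$-dimensional hull while keeping $G^{(N)}$ equal to the full regular representation; every remaining step is either a verbatim transcription of the one-dimensional argument or the elementary centraliser computation for the regular representation.
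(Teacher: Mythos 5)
Your proposal is correct and follows essentially the same route as the paper: the corollary is obtained there directly from Remark~\ref{rem: direct multidimensional generalization}, i.e.\ the observation that the proofs of Theorem~\ref{thm: DDMP-result} (the regular-representation construction, whose centraliser computation you reproduce as in Example~\ref{ex:quaternions}) and Theorem~\ref{thm:alphabet size limitation} never use one-dimensionality, with aperiodicity in $\mathbb{Z}^d$ guaranteed via Proposition~\ref{prop: aperiodicity HD proximal}. Your explicit cubic-support construction with right-multiplication columns (padding with identity columns to secure both transitivity of $G^{(N)}$ and the two legal $2\times\cdots\times2$ blocks needed for the proximality criterion) simply spells out what the paper leaves implicit, so there is nothing to flag.
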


\section{Extended and reversing symmetries of substitution shifts}

\subsection{One-dimensional shifts}\label{sec: reversing symm}
Since the term symmetry group does not cover everything that can be thought of as a symmetry (in the geometric sense of the word) we introduce the notion of the reversing symmetry group; see \cite{BRY} for a detailed exposition. We will exclusively look at shift spaces $\mathbb{X}_\varrho$ which are given by a bijective, primitive substitution $\varrho$ and we will exploit this additional structure in determining the reversing symmetry group for this class.

\begin{definition}
The \emph{extended symmetry group} of a shift space $\mathbb{X}$ is given as 
\[
\mathcal{R}(\mathbb{X}):=\norm_{\Aut(\mathbb{X})}(\mathcal{G})= \{H \in \Aut(\mathbb{X}) \mid H \mathcal{G} = \mathcal{G} H \}
\]
where $\mathcal{G}$ is the group generated by the shift. 
In the case where the shift space is one-dimensional, we call $\mathcal{R}(\mathbb{X})$ the \emph{reversing symmetry group}  given by 
\[
\mathcal{R}(\mathbb{X})= \{H \in \Aut(\mathbb{X}) \mid H \circ \sigma\circ  H^{-1} = \sigma^{\pm 1} \}. 
\] 
\end{definition}

A homeomorphism $H\in \text{Aut}(\mathbb{X})$ which satisfies $H\circ\sigma \circ H^{-1}=\sigma^{-1}$ is called a \emph{reversor} or a \emph{reversing symmetry}.  
A Curtis--Hedlund--Lyndon-type characterisation of reversing symmetries, which incorporates the mirroring component ($\text{GL}(d,\mathbb{Z}$)-component in higher dimensions)
can be found in \cite{BRY}.

In what follows, we investigate the effect of a reversor $f$ on inflated words. Given a bijective substitution $
\varrho\colon \mathcal{A} \rightarrow \mathcal{A}^L, 
\varrho := \varrho^{ }_0\varrho^{ }_1 \cdots \varrho^{ }_{L-1}$, the mirroring operation $m$  acts on the columns of $\varrho$ via $
m(\varrho(a))=   \varrho^{ }_{L-1}(a)\cdots \varrho^{ }_2(a)  \varrho^{ }_0(a)$. We may extend this to infinite configurations over $\mathbb{Z}$ in two non-equivalent ways, given by $m(x)_k = x_{-k}$ and $m'(x)_k = x_{1-k}$, respectively; we shall refer to both as basic mirroring maps.

\begin{proposition}\label{prop: reversor form}
    Let $\varrho$ be an aperiodic, primitive, bijective substitution. Then, any reversor is a composition of a letter exchange map $\pi\in S_{n}$, where $n=|\mathcal{A}|$, a shift map $\sigma^k$ and one of the two basic mirroring maps $m$ or $m'$ (depending only on whether the substitution has odd or even length, respectively). 
\end{proposition}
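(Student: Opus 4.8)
The plan is to show that any reversor $f$ must preserve the supertile structure of $\mathbb{X}_\varrho$ up to mirroring, in exactly the same way that ordinary symmetries preserve it (as in Theorem~\ref{thm:symm_group}), and that this structural rigidity forces $f$ to factor as claimed. First I would record the CHL-type observation that, since $f\in\mathcal{R}(\mathbb{X}_\varrho)$ satisfies $f\circ\sigma\circ f^{-1}=\sigma^{-1}$, the composition $m\circ f$ (or $m'\circ f$) commutes with $\sigma$ and is therefore an element of the ordinary symmetry group $\mathcal{S}(\mathbb{X}_\varrho)$. By Theorem~\ref{thm:symm_group}, every such element is a shift composed with a radius-$0$ letter exchange $\tau_\infty$. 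Hence $f = m^{-1}\circ\sigma^k\circ\tau_\infty$ (or with $m'$), and since $m$ is an involution this already nearly gives the desired decomposition; the remaining work is to show that the choice between $m$ and $m'$ is dictated by the parity of $L$, and that no reversor can use the ``wrong'' mirroring.

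The crux, therefore, is the parity argument. Here I would examine how a basic mirroring map interacts with the inflation $\varrho$. The key computation is that $m$ conjugates the level-$1$ supertile decomposition to the mirror substitution $\bar\varrho$ whose columns are $\bar\varrho^{ }_j = \varrho^{ }_{L-1-j}$, and likewise for higher powers. The two mirroring maps $m$ and $m'$ differ by a single shift, $m' = \sigma^{-1}\circ m$, so the relevant question is how the origin sits relative to supertile boundaries after mirroring. For a level-$k$ supertile $\varrho^k(a)$ of length $L^k$, mirroring sends its cells $0,\dots,L^k-1$ to $0,-1,\dots,-(L^k-1)$; to realign the mirrored configuration with a genuine supertile tiling whose boundaries lie at multiples of $L^k$, one needs a shift whose residue mod $L^k$ is compatible with the parity of $L^k$. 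I expect that $m$ does the realignment correctly precisely when $L$ (hence $L^k$) is odd, while the half-unit offset built into $m'$ is exactly what is required when $L$ is even; this is where the dependence on parity enters and must be made precise by tracking the position of the central cell(s) of a supertile.

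I would then combine the two halves: starting from an arbitrary reversor $f$, compose with the parity-appropriate mirroring to land inside $\mathcal{S}(\mathbb{X}_\varrho)$, invoke Theorem~\ref{thm:symm_group} to peel off $\sigma^k$ and $\tau_\infty=\pi_\infty$, and then argue that the alternative mirroring is impossible because it would fail to respect the supertile boundaries at every scale, contradicting the requirement that $f$ maps legal configurations to legal configurations. Concretely, if a reversor used the mirroring of the wrong parity, then $m\circ f$ (or $m'\circ f$) would commute with $\sigma$ but fail to preserve the inflation structure of $\mathbb{X}_\varrho$, which is incompatible with the rigidity already established for symmetries. This forces the unique choice of mirroring and completes the decomposition $f = \pi\circ\sigma^k\circ m^{(\prime)}$.

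The main obstacle I anticipate is the parity bookkeeping: one must carefully track how the origin and the supertile boundaries transform under $m$ versus $m'$ across all powers $\varrho^k$, and verify that exactly one of the two maps is consistent with the tiling structure, uniformly in $k$. Aperiodicity is essential here, since it guarantees (via the recognizability or unique desubstitution implicit in Proposition~\ref{prop: aperiodicity proximal pair} and Theorem~\ref{thm:symm_group}) that supertile boundaries are canonically determined, so that the alignment condition is genuinely forced rather than a matter of choice. Once the boundary-alignment computation is pinned down, the rest follows cleanly from the already-established structure of $\mathcal{S}(\mathbb{X}_\varrho)$.
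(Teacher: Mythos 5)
Your opening step contains the gap on which the whole proposal rests. From $f\circ\sigma\circ f^{-1}=\sigma^{-1}$ you correctly conclude that $m\circ f$ commutes with the shift, but \emph{not} that it lies in $\mathcal{S}(\mathbb{X}_\varrho)$: the mirroring map $m$ is defined on the full shift and sends $\mathbb{X}_\varrho$ onto the mirrored hull $m(\mathbb{X}_\varrho)$, which is the hull of the reflected substitution with columns $\varrho^{ }_{L-(j+1)}$ and need not coincide with $\mathbb{X}_\varrho$. The existence of a reversor only tells you that $(\mathbb{X}_\varrho,\sigma)$ is topologically conjugate to $(m(\mathbb{X}_\varrho),\sigma)$; that the conjugacy can be taken to be a letter exchange (so that the two hulls agree up to relabelling) is part of what the proposition asserts, not something you may assume. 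Consequently Theorem~\ref{thm:symm_group}, which concerns shift-commuting \emph{self}-homeomorphisms of $\mathbb{X}_\varrho$, cannot be invoked for $m\circ f$; what you would need is a radius-zero rigidity statement for shift-commuting conjugacies between the hulls of two different bijective substitutions of the same length, and proving that is essentially the entire content of the proposition. This is what the paper's proof supplies directly: it takes a reversor of radius $r$, picks $k$ with $L^k\geqslant 2r+1$, uses aperiodicity (unique composition into level-$k$ supertiles, after Solomyak) to decompose both $x$ and $y=f(x)$, composes $f$ with a shift to align the two decompositions, and then uses bijectivity --- each word $\varrho^k(a)$ is determined by the letter at any one fixed position, in particular its central one --- to conclude that $y_n$ depends on $x_{-n}$ alone, i.e.\ the shifted map has radius $0$. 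Your closing appeal, that the wrong mirroring would ``fail to preserve the inflation structure'' of a map already known to commute with $\sigma$, again presupposes exactly this self-map rigidity and so does not repair the circularity.

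A smaller point: the extensive parity analysis you anticipate is not actually there to be done. Since $m'=\sigma^{-1}\circ m$, the two basic mirrorings differ by a shift, and your target decomposition already contains an arbitrary shift factor $\sigma^k$; hence no reversor is ever ``incompatible'' with either mirroring, and there is nothing to rule out. The parity of $L$ only governs which of the two is the natural normal form: in the paper's argument it merely records whether the aligned central supertile of odd length $L^k=2\ell+1$ is reflected about a cell (giving $m$) or, for even $L$, about a bond between cells (giving $m'$), and this falls out of the alignment bookkeeping rather than requiring a separate impossibility argument.
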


See \cite[Prop.~1]{BRY} and Theorem~\ref{thm:symm_group}. This result, while desirable, is not immediately obvious (and can indeed be false for non-bijective substitutions, which may have reversors whose local functions have positive radius), and thus we show this result as a consequence of bijectivity.

\begin{proof}
    Suppose $f\colon\mathbb{X}_\varrho\to\mathbb{X}_\varrho$ is a reversor of positive radius $r\geqslant 1$, i.e.,  $x\rvert_{[-r,r]}=y\rvert_{[-r,r]}$ implies that one has $f(x)_0 = f(y)_0$. There is some power $k\geqslant 1$ such that the words $\varrho^k(a)$ of length $L^k$ are longer than the local window of $f$, which has length $2r+1$ (say, $k=\lceil\log(2r+1)/\log(L)\rceil$). Any point of $\mathbb{X}_\varrho$ is a concatenation of words of the form $\varrho^k(a),a\in\mathcal{A}$, which is unique up to a shift because of aperiodicity; see \cite{Sol}. 
    In particular, if we choose a fixed $x\in\mathbb{X}_\varrho$ and let $y = f(x)$, both points have such a decomposition. 

    Now, suppose that the value $L^k = 2\ell + 1$ is odd (the case where $L$ is even is dealt with similarly). By composing $f$ with an appropriate shift map (say $\tilde{f}=f\circ\sigma^h$), we can ensure that the central word $\varrho^k(a)$ in the aforementioned decomposition has support $[-\ell,\ell]$ for both $x$ and $y$ (note that we employ the uniqueness of the decomposition here, to avoid ambiguity in the chosen $h$). Since $L^k=2\ell+1\geqslant 2r+1$, we must have $\ell\geqslant r$, and thus $y_0$ is entirely determined by $x\rvert_{[-\ell,\ell]}$, which is a substitutive word $\varrho^k(a)$. But, since $\varrho$ is bijective, this word is in turn completely determined by its central symbol $x_0$.
    
    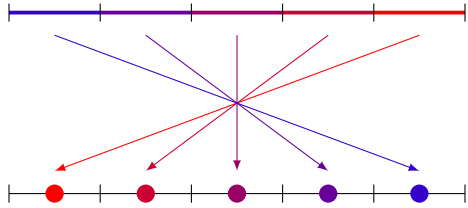
\begin{figure}[!h]
        \centering
        \begin{tikzpicture}[scale=1.2]
            \draw (0,0) -- (5,0);
            
            \draw[color=red!100!blue,line width=0.5mm] (5,2) -- (4,2);
            \draw[color=red!80!blue,line width=0.5mm] (4,2) -- (3,2);
            \draw[color=red!60!blue,line width=0.5mm] (3,2) -- (2,2);
            \draw[color=red!40!blue,line width=0.5mm] (2,2) -- (1,2);
            \draw[color=red!20!blue,line width=0.5mm] (1,2) -- (0,2);
            
            \draw[color=red!100!blue,-latex] (4.5,1.75) -- (0.5,0.25);
            \draw[color=red!80!blue,-latex] (3.5,1.75) -- (1.5,0.25);
            \draw[color=red!60!blue,-latex] (2.5,1.75) -- (2.5,0.25);
            \draw[color=red!40!blue,-latex] (1.5,1.75) -- (3.5,0.25);
            \draw[color=red!20!blue,-latex] (0.5,1.75) -- (4.5,0.25);
    
            \fill[color=red!100!blue] (0.5,0) circle (1mm);
            \fill[color=red!80!blue] (1.5,0) circle (1mm);
            \fill[color=red!60!blue] (2.5,0) circle (1mm);
            \fill[color=red!40!blue] (3.5,0) circle (1mm);
            \fill[color=red!20!blue] (4.5,0) circle (1mm);
                    
            \foreach \i in {0,1,...,5} {
                \draw (\i,-0.1) -- (\i,0.1);
                \draw[yshift=2cm] (\i,-0.1) -- (\i,0.1);    
            }
        \end{tikzpicture}
        \caption{A reversor $f$ establishes a 1-1 correspondence between words $\varrho^k(a)$ in a point $x$ and its image $f(x)$.}
        \label{fig:relabeling_argument}
    \end{figure}
    
    A similar argument shows that, for any $n\in\mathbb{Z}$, if $n\in mL^k + [-\ell,\ell]$, then $y_n$ depends only on the word $x\rvert_{-mL^k+[-\ell,\ell]}$, which contains (and is thus entirely determined by) $x_{-n}$. Since any point in $\mathbb{X}_\varrho$ is transitive, $\tilde{f}$ is entirely determined by the points $x$ and $y$, and thus, $\tilde{f}$ is a map of radius $0$. Equivalently, for some bijection $\pi\colon\mathcal{A}\to\mathcal{A}$, we have $\tilde{f}(x)_{-n} = \pi(x_n)$, that is, $\tilde{f} = f\circ\sigma_h = \pi\circ m$ (identifying $\pi$ with the letter exchange map $\mathcal{A}^{\mathbb{Z}}\to\mathcal{A}^{\mathbb{Z}}$). We conclude that $f$ is a composition of a letter exchange map, a mirroring map and a shift map.
\end{proof}

\begin{remark}
    \label{rem:higher dimensional ext symm decomposition}
    With some care, it can be shown that the same argument applies in the higher-dimensional case, where an element of the normaliser is a composition of a letter exchange map, a map of the form $f(x)_{\boldsymbol{n}} = x^{}_{A\boldsymbol{n}}$, with $A$ a linear map from the hyperoctahedral group (see Theorem \ref{thm: hyperoctahedral HD}, below), and a shift map; see \cite[Prop.~3]{BRY} for a more general formulation.\exend
\end{remark}

This result leads to the following criterion for the existence of a reversor in terms of the columns $\varrho^{ }_i$.

\begin{proposition} \label{prop:reversing necessary}
Let $\varrho$ be an aperiodic, primitive, bijective substitution $\varrho$ of length $L$ on a finite alphabet $\mathcal{A}$ of $n$ letters. Suppose that there exists a letter-exchange map $\pi\in S_n,\,\pi:\mathcal{A}\to\mathcal{A}$ which gives rise to a reversing symmetry. Then one has
\begin{equation}\label{eq: reversing symm condition}
\pi^{-1}\circ \varrho^{ }_i\circ \varrho^{-1}_{j}\circ \pi =\varrho^{ }_{L-(i+1)}\circ\varrho^{-1}_{L-(j+1)} 
\end{equation}
for all $0\leqslant i,j\leqslant L-1$, where $\varrho^{ }_i$  is the $i$-th column of $\varrho$ seen as an element of $S_n$. 
\end{proposition}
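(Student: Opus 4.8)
The plan is to start from the structural description of reversors provided by Proposition~\ref{prop: reversor form}: after composing with a suitable shift, any reversor $f$ equals $\pi\circ m$ when $L$ is odd (respectively $\pi\circ m'$ when $L$ is even), where $\pi\in S_n$ is a letter-exchange map and $m,m'$ are the basic mirroring maps. The core of the argument is then to show that such an $f$ carries level-$1$ supertiles to level-$1$ supertiles, so that there is a well-defined bijection $\beta\colon\mathcal{A}\to\mathcal{A}$ recording the fact that the reflected and $\pi$-relabelled copy of a supertile $\varrho(a)$ is again a legal supertile $\varrho(\beta(a))$. Everything else is an algebraic elimination of $\beta$.

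To produce $\beta$, I would fix an $x\in\mathbb{X}_\varrho$ together with its unique decomposition into level-$1$ supertiles (this uniqueness is exactly aperiodicity/recognisability, see \cite{Sol}), and record that $f(x)\in\mathbb{X}_\varrho$ admits an analogous decomposition. The mirroring operation reverses the reading order and sends an aligned interval such as $[0,L-1]$ onto another aligned interval; after absorbing the translation as in Proposition~\ref{prop: reversor form}, the reflected, $\pi$-relabelled image of the supertile $\varrho(a)$ reads, at position $0\leqslant j\leqslant L-1$, the letter $\pi(\varrho^{ }_{L-1-j}(a))$. Demanding that this word be the supertile $\varrho(\beta(a))$ yields, read as compositions of permutations of $\mathcal{A}$, the column identities $\pi\circ\varrho^{ }_{L-1-j}=\varrho^{ }_{j}\circ\beta$ for every $0\leqslant j\leqslant L-1$; equivalently, $\beta=\varrho_{j}^{-1}\circ\pi\circ\varrho^{ }_{L-1-j}$ is independent of $j$.

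With these identities in hand the conclusion is immediate. From $\pi=\varrho^{ }_{j}\circ\beta\circ\varrho_{L-1-j}^{-1}$ (using the index $j$) and $\pi^{-1}=\varrho^{ }_{L-1-i}\circ\beta^{-1}\circ\varrho_{i}^{-1}$ (using the index $i$), substituting into $\pi^{-1}\circ\varrho^{ }_{i}\circ\varrho_{j}^{-1}\circ\pi$ collapses the inner factors $\varrho_{i}^{-1}\circ\varrho^{ }_{i}$ and $\varrho_{j}^{-1}\circ\varrho^{ }_{j}$ together with the pair $\beta^{-1}\circ\beta$, leaving precisely $\varrho^{ }_{L-1-i}\circ\varrho_{L-1-j}^{-1}=\varrho^{ }_{L-(i+1)}\circ\varrho_{L-(j+1)}^{-1}$, which is Eq.~\eqref{eq: reversing symm condition}.

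I expect the main obstacle to be the rigorous justification that supertiles are sent to supertiles, i.e.\ that $\beta$ is well defined: this is exactly where aperiodicity and recognisability, together with the orientation-reversing nature of the mirror, must be used, and where the bookkeeping between the two parities of $L$ (and the corresponding choice of $m$ versus $m'$) has to be handled so that the reflected supertiles land on aligned intervals and the absorbed shift is unambiguous. Once this step is secured the remaining manipulation is routine, and in fact the assertion that $\beta=\varrho_{j}^{-1}\circ\pi\circ\varrho^{ }_{L-1-j}$ does not depend on $j$ is essentially equivalent to the claimed relation.
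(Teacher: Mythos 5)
Your proposal is correct and takes essentially the same route as the paper: your $\beta$ is precisely the paper's permutation $\tau$ recording the induced shuffling of inflation words, and your identity $\beta=\varrho_{j}^{-1}\circ\pi\circ\varrho^{ }_{L-(j+1)}$, valid for all $j$ (with well-definedness secured by Proposition~\ref{prop: reversor form} and the unique supertile decomposition from aperiodicity), is exactly the paper's key step. Equating these expressions for two indices $i,j$ and eliminating $\beta$ yields Eq.~\eqref{eq: reversing symm condition} just as in the paper's proof.
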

\begin{proof}

Let $a\in\mathcal{A}$. Let $m$ be the mirroring operation and suppose that there exists $\pi\in S_n$ such that $m\circ \pi$ extends to a reversor $ f\in\mathcal{R}(\mathbb{X}_\varrho)$. One then has 
\[
\varrho(a)=\varrho^{ }_0(a)\cdots \varrho^{ }_{L-1}(a)\,\overset{m}{\longmapsto}\, \varrho^{ }_{L-1}(a)\cdots \varrho^{ }_{0}(a)\, \overset{\pi}{\longmapsto}\, \pi\circ \varrho^{ }_{L-1}(a)\cdots \pi\circ\varrho^{ }_{0}(a).
\]
Since Proposition~\ref{prop: reversor form} guarantees that this must result to mapping substituted words to substituted words, one gets
\begin{equation}\label{eq: compare tau}
\pi\circ \varrho^{ }_{L-1}(a)\cdots \pi\circ\varrho^{ }_{0}(a)=\varrho^{ }_0(b)\cdots \varrho^{ }_{L-1}(b)=\varrho^{ }_0\circ\tau (a)\cdots \varrho^{ }_{L-1}\circ\tau(a), 
\end{equation}
where the permutation $\tau$ describes precisely this induced shuffling of inflation words. 
This yields
\[
\tau=\varrho^{-1}_j\circ\pi\circ \varrho^{ }_{L-(j+1)}
\]
for all $0\leqslant j\leqslant L-1$. Equating the corresponding right hand-sides for some pair $i,j$ yields Eq.~\eqref{eq: reversing symm condition}. The claim follows since this must hold for all $0\leqslant i,j\leqslant L-1$.
\end{proof}

\begin{theorem}\label{thm: reversing symmetries iff}
Let $\varrho$ be as in Proposition~\textnormal{\ref{prop:reversing necessary}}. Suppose further that  $\varrho^{ }_i=\varrho^{ }_{L-(i+1)}=\textnormal{id}$ for some $0\leqslant i \leqslant L-1$. Then, given a permutation (letter exchange map) $\pi\in S_n,\pi\colon \mathcal{A}\to \mathcal{A}$, the following are equivalent:

\begin{enumerate}
    \item[(i)] The letter exchange map $\pi$ gives rise to a reversing symmetry $f\in \mathcal{R}(\mathbb{X}_\varrho)\setminus \mathcal{S}(\mathbb{X}_\varrho)$ given by either $f(x)_n = \pi(x_{-n})$ or $f(x)_n = \pi(x_{1-n})$.
    \item[(ii)] The permutation $\pi$ satisfies the system of equations
    \begin{equation} \label{eq: reversing symm condition simple}
        \pi^{-1} \circ \varrho^{ }_i \circ \pi = \varrho^{ }_{L-(i+1)}     
    \end{equation}
    for all $0\leqslant i \leqslant L-1$. 
    \item[(iii)] There exist $\kappa^{ }_0,\kappa^{ }_1,\dotsc,\kappa^{ }_{L-1}\in S_n$, where each $\kappa^{ }_i$ satisfies $\kappa_i^{-1}\circ\varrho^{ }_i\circ\kappa^{ }_i = \varrho^{ }_{L-(i+1)}$, such that the following intersection of cosets is non-empty:
        \begin{equation}
            \label{eq: coset intersection}
            K = \bigcap_{i=0}^{L-1} \operatorname{cent}_{S_n}(\varrho^{ }_i)\kappa^{ }_i,
        \end{equation}
    and $\pi\in K$.
\end{enumerate}

\end{theorem}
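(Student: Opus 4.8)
The plan is to prove the cycle of equivalences by treating (i) $\Leftrightarrow$ (ii) via the action of a reversor on supertiles and (ii) $\Leftrightarrow$ (iii) as a purely group-theoretic statement about coset intersections. Throughout, let $i_0$ be an index with $\varrho^{}_{i_0}=\varrho^{}_{L-(i_0+1)}=\mathrm{id}$, which exists by hypothesis. The implication (i) $\Rightarrow$ (ii) is then almost immediate: if $\pi$ gives rise to a reversor, Proposition~\ref{prop:reversing necessary} supplies Eq.~\eqref{eq: reversing symm condition} for all $0\leqslant i,j\leqslant L-1$, and specialising to $j=i_0$ (so that $\varrho^{}_{j}=\varrho^{}_{L-(j+1)}=\mathrm{id}$) collapses that identity to exactly Eq.~\eqref{eq: reversing symm condition simple}. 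Conversely, writing Eq.~\eqref{eq: reversing symm condition simple} for both $i$ and $j$ and multiplying recovers Eq.~\eqref{eq: reversing symm condition}, so under the standing hypothesis the two conditions carry the same information.

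The substantive direction is (ii) $\Rightarrow$ (i), where I would exhibit the reversor explicitly. Let $\pi^{}_\infty$ denote the letterwise application of $\pi$, and set $f=m\circ\pi^{}_\infty$ (for odd $L$) or $f=m'\circ\pi^{}_\infty$ (for even $L$); these commute with $\pi^{}_\infty$ and differ only by the shift, so the parity merely selects the realisation giving the stated formula $f(x)_n=\pi(x_{-n})$ or $f(x)_n=\pi(x_{1-n})$. The heart of the argument is the word-level identity
\[
 f\big(\varrho^{k}(a)\big)=\varrho^{k}(\pi(a)) \qquad (a\in\mathcal{A},\ k\geqslant 0),
\]
understood as reversing and relabelling by $\pi$. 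I would prove it by induction on $k$: for $k=0$ it is the definition of $\pi^{}_\infty$, and for the step one writes $\varrho^{k+1}(a)=\varrho^{k}(\varrho^{}_0(a))\cdots\varrho^{k}(\varrho^{}_{L-1}(a))$, notes that reversing this concatenation moves the block $\varrho^{k}(\varrho^{}_j(a))$ to slot $L-1-j$ and internally reverses it, applies the inductive hypothesis, and then invokes the block-level consequence $\pi\circ\varrho^{}_{L-1-j}=\varrho^{}_j\circ\pi$ of Eq.~\eqref{eq: reversing symm condition simple}. Since every word of $\mathcal{L}_\varrho$ is a subword of some $\varrho^{k}(a)$, and reversing-and-relabelling a subword of $\varrho^{k}(a)$ yields a subword of $\varrho^{k}(\pi(a))$, this identity shows $f$ preserves $\mathcal{L}_\varrho$; because $\pi^{-1}$ also satisfies Eq.~\eqref{eq: reversing symm condition simple}, the inverse map preserves $\mathcal{L}_\varrho$ too, so $f$ is a well-defined self-homeomorphism of $\mathbb{X}_\varrho$. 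Finally $f\circ\sigma\circ f^{-1}=\sigma^{-1}$ since mirroring reverses the shift and $\pi^{}_\infty$ commutes with it, and $f\notin\mathcal{S}(\mathbb{X}_\varrho)$ because $\sigma^2\neq\mathrm{id}$ on an aperiodic hull. I expect this verification — that the candidate is a genuine self-homeomorphism of the hull, i.e. the language-preservation step with the parity bookkeeping choosing $m$ versus $m'$ — to be the main obstacle; Proposition~\ref{prop: reversor form} ensures no reversor outside this normal form is missed.

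For (ii) $\Leftrightarrow$ (iii) I would argue purely group-theoretically. Fixing $i$ and assuming $\kappa^{-1}\circ\varrho^{}_i\circ\kappa=\varrho^{}_{L-(i+1)}$ has a solution $\kappa^{}_i$, a standard computation shows the full solution set is the right coset $\operatorname{cent}_{S_n}(\varrho^{}_i)\kappa^{}_i$: the equality $\pi^{-1}\varrho^{}_i\pi=\kappa_i^{-1}\varrho^{}_i\kappa^{}_i$ holds if and only if $\pi\kappa_i^{-1}$ centralises $\varrho^{}_i$, and this coset does not depend on the chosen solution. Condition (ii) asks $\pi$ to solve all $L$ equations at once, i.e. to lie in $\bigcap_{i=0}^{L-1}\operatorname{cent}_{S_n}(\varrho^{}_i)\kappa^{}_i=K$ of Eq.~\eqref{eq: coset intersection}; such a $\pi$ exists exactly when each equation is individually solvable (the existence of the $\kappa^{}_i$) and the resulting cosets meet, which is precisely the assertion of (iii). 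Thus (ii) and (iii) are two phrasings of the same simultaneous-conjugacy problem, with (iii) reducing its verification to a finite intersection of cosets and hence yielding the algorithm promised in the surrounding discussion.
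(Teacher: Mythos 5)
Your proposal is correct and takes essentially the same approach as the paper: (i)$\Rightarrow$(ii) by specialising Proposition~\ref{prop:reversing necessary} at the identity-column pair, (ii)$\Leftrightarrow$(iii) by the identical right-coset argument, and (ii)$\Rightarrow$(i) by propagating the conjugation relations to all supertile levels, where your induction on $k$ proving $f(\varrho^k(a))=\varrho^k(\pi(a))$ is a repackaging of the paper's $L$-adic digit computation $\varrho^{}_{L^k-(i+1)}=\varrho^{}_{L-(i_0+1)}\circ\cdots\circ\varrho^{}_{L-(i_{k-1}+1)}$. Your explicit language-preservation and invertibility step simply fills in the well-definedness of $f$ on the hull, which the paper leaves implicit when invoking Proposition~\ref{prop: reversor form}.
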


\begin{proof}
It is clear that Eq.~\eqref{eq: reversing symm condition simple} implies Eq.~\eqref{eq: reversing symm condition}.
Note that it is sufficient to satisfy Eq.~\eqref{eq: reversing symm condition} for $j=i + 1$ mod $L$ as any term can be obtained by multiplying sufficient numbers of succeeding terms. Under the extra assumption that 
there exist a column pair which is the identity, Eq.~\eqref{eq: reversing symm condition} simplifies to Eq.~\eqref{eq: reversing symm condition simple}. This shows that $\mathrm{(i)\implies(ii)}$. 

For the other direction, we show that if Eq.~\eqref{eq: reversing symm condition simple} is satisfied at by the level-1 inflation words, then these sets of equations must also be fulfilled by any power $\varrho^k$ of $\varrho$. %
Remember that, from any arbitrary bijective substitution $\varrho$, we may derive another bijective substitution $\varrho'$ that satisfies the additional condition of having two identity columns in opposing positions by choosing $k=\text{lcm}(|\varrho_0|, |\varrho_{L-1}|)$ and replacing $\varrho$ by its $k$-th power, $\varrho^{\prime}:=\varrho^{k}$. This makes no difference when studying $\mathcal{R}(\mathbb{X}_\varrho)$, because $\varrho$ and $\varrho^{k}$ define the same subshift and the group of reversing symmetries is a property of the hull.

First, we prove an important property of the columns of powers. Fix a power $k\in\mathbb{N}$ and pick a column $\varrho^{ }_i$ of $\varrho^k$, where $0\leqslant i\leqslant L^k-1$. One then has 
$\varrho^{ }_i= \varrho^{ }_{i_0} \cdots \varrho^{ }_{i_{k-1}}$
where $i_{0}i_{1}\cdots i_{k-1}$ is the $L$-adic expansion of $i$ and $\varrho_{i_\ell}$ are columns of the level-$1$ substitution $\varrho$.

The corresponding $L$-adic expansion of $L^{k}-(i+1)$ is then given by \[L^{k}-(i+1)=(L-(i_0+1))\cdots (L-(i_{k-1}+1)).\]
This can easily be shown via the following direct computation
\[
\sum_{j=0}^{k-1}(L-(i_j+1))L^j=\sum_{j=0}^{k-1}(L^{j+1}-L^j)-\sum^{k-1}_{j=0}i_jL^j = L^k-(i+1). 
\]
This implies that if one considers the corresponding column $\varrho^{ }_{L^k-(i+1)}$ one gets that 
\begin{equation}\label{eq:expansion kth level column}
\varrho^{ }_{L^{k}-(i+1)}= \varrho^{ }_{L-(i_0+1)} \cdots \varrho^{ }_{L-(i_{k-1}+1)}. 
\end{equation}
This has two consequences. First, if $\varrho$ has an identity column pair, then all powers of $\varrho$ admit at least one identity column pair. For each power $k$ one just needs to choose $\varrho_j$ with $j=iii\cdots i$, which implies $\varrho_j=\varrho^{k}_i=\text{id}$. By Eq.~\eqref{eq:expansion kth level column}, we also get that $\varrho_{L^{k}-(j+1)}=(\varrho_{L-(i+1)})^k=\text{id}$. In fact, $\varrho^k$ contains at least $2^{k-1}$ pairs of identity columns.

Second, this property allows one to prove that if $\varrho$ satisfies the system of equations in Eq.~\eqref{eq: reversing symm condition simple}, then it is satisfied at all levels, i.e., by all powers of $\varrho$. 
To this end, choose $0\leqslant i\leqslant L^{k}-1$
with $L$-adic expansion $i_{0}i_{1}\cdots i_{k-1}$.
From Eq.~\eqref{eq: reversing symm condition simple} one then obtains
\begin{align*}
\pi^{-1}\circ \varrho^{ }_i\circ \pi &=\pi^{-1}\circ \varrho^{ }_{i_0} \cdots  \varrho^{ }_{i_{k-1}}  \circ \pi =\pi^{-1}\circ \varrho^{ }_{i_0} \pi \pi^{-1}  \cdots \pi \pi^{-1}  \varrho^{ }_{i_{k-1}} \pi  \\
&= \varrho^{ }_{L-(i_0+1)} \cdots \varrho^{ }_{L-(i_{k-1}+1)} = \varrho^{ }_{L^k-(i+1)}.
\end{align*}
Since $i$ is chosen arbitrarily and $\pi$ induces a permutation of the substituted words at all levels, this means it extends to a map $f=\sigma_n\circ m\circ\pi\colon \mathbb{X}_{\varrho}\to \mathbb{X}_{\varrho}$, which by Proposition~\ref{prop: reversor form} is a reversor. This shows $\mathrm{(ii)\implies(i)}$.

To prove the remaining equivalences, note that if $\pi_1,\pi_2\in S_n$ are two permutations satisfying the equality $\pi^{-1}\circ\varrho^{ }_i\circ\pi=\varrho^{ }_{L-(i+1)}$, then we have:
    \[\pi_1\circ\varrho^{ }_{L-(i+1)}\circ\pi_1^{-1} = \varrho^{ }_i\implies (\pi_2\circ\pi_1^{-1})^{-1}\circ\varrho^{ }_i\circ(\pi_2\circ\pi_1^{-1}) = \varrho^{ }_i, \]
that is, $(\pi_2\circ\pi_1^{-1})\in\operatorname{cent}_{S_n}(\varrho_i)$. As a consequence, $\pi_1$ belongs to the right coset $\operatorname{cent}_{S_n}(\varrho_i)\pi_2$ for any choice of $\pi_1,\pi_2$, and, since right cosets are either equal or disjoint, this means that all solutions of Eq.~\eqref{eq: reversing symm condition simple}, for a fixed $i$,
lie in the same right coset of $\operatorname{cent}_{S_n}(\varrho_i)$. Reciprocally, if $\pi$ satisfies Eq.~\eqref{eq: reversing symm condition simple} and $\gamma\in\operatorname{cent}_{S_n}(\varrho_i)$, it is easy to verify that $\gamma\circ\pi$ satisfies Eq.~\eqref{eq: reversing symm condition simple} as well. Thus, the set of solutions of this equation is either empty or the aforementioned uniquely defined right coset.

Thus, suppose that $\pi$ satisfies Eq.~\eqref{eq: reversing symm condition simple} for all $0\leqslant i\leqslant L-1$. The set of solutions for each $i$ equals the unique coset $\operatorname{cent}_{S_n}(\varrho_i)\pi$, and thus the set of all permutations that satisfy Eq.~\eqref{eq: reversing symm condition simple} for all $i$ is exactly the intersection of all these cosets, i.e. $\bigcap_{i=0}^{L-1}\operatorname{cent}_{S_n}(\varrho_i)\pi$.
Taking $\kappa^{ }_i=\pi$ for all $i$, we see that this is exactly the set $K$ from \eqref{eq: coset intersection}. Evidently, $\pi$ belongs to this intersection, and so we conclude that $\mathrm{(ii)\implies(iii)}$.

As stated before, our choice of $\kappa^{ }_i$ ensures that the set $\operatorname{cent}_{S_n}(\varrho^{ }_i)\kappa^{ }_i$ is exactly the set of solutions of Eq.~\eqref{eq: reversing symm condition simple} for a given $i$; thus, any permutation $\pi$ that satisfies all of these equalities must be in all of these cosets and thus in the intersection \eqref{eq: coset intersection}, which is therefore non-empty. This shows that $\mathrm{(iii)\implies(ii)}$, concluding the proof.
\end{proof}

The following general criterion on when a letter-exchange map generates  a reversor is given in \cite{BRY}. 

\begin{lemma}[{\cite[Lem.~2]{BRY}}]\label{lem: BRY lemma}
Let $\varrho$ be a primitive constant-length substitution of height $1$ and column number $c_{\varrho}$. Suppose that $\varrho$ is strongly injective. Then, a permutation $\pi\colon \mathcal{A}\to\mathcal{A}$ generates a reversor $f\in\mathcal{R}(\mathbb{X}_{\varrho})$ if and only if
\begin{enumerate}
\item $ab\in\mathcal{L}_{\varrho}^2\implies \pi(ba)\in \mathcal{L}_{\varrho}^2$
\item $(\pi\circ\varrho^{c_{\varrho}!})(ab)=(\varrho^{c_{\varrho}!}\circ \pi )(ba)$
\end{enumerate}
for each $ab\in\mathcal{L}^2_{\varrho}$. \qed
\end{lemma}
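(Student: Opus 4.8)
The plan is to prove Lemma~\ref{lem: BRY lemma} by connecting the two stated conditions to the structural characterisation of reversors already developed in Proposition~\ref{prop: reversor form} and Theorem~\ref{thm: reversing symmetries iff}, while accounting for the hypotheses of height $1$ and strong injectivity, which replace the bijectivity assumption used in our earlier results. First I would recall the setup from \cite{BRY}: for a strongly injective substitution of height $1$, the column number $c_\varrho$ plays the role that $|\mathcal{A}|$ or the period of the columns played before, and raising $\varrho$ to the power $c_\varrho!$ guarantees that a suitable column becomes the identity, so that fixed points and their inflation structure are well controlled. By Proposition~\ref{prop: reversor form} (and its generalisation to this setting), any candidate reversor decomposes as $f = \sigma^k \circ m \circ \pi$ for a letter-exchange map $\pi$, so the entire question reduces to determining which $\pi$ extend to a reversor.

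For the forward direction, I would assume $\pi$ generates a reversor $f\in\mathcal{R}(\mathbb{X}_\varrho)$ and extract the two combinatorial conditions. Condition (1) is essentially the statement that a reversor, being a homeomorphism that conjugates $\sigma$ to $\sigma^{-1}$, must send legal words to reversed legal words up to the relabelling $\pi$: if $ab\in\mathcal{L}^2_\varrho$ appears in some $x$, then applying the basic mirroring map $m$ reverses the order to $ba$ and $\pi$ relabels, so $\pi(ba)$ must appear in $f(x)\in\mathbb{X}_\varrho$ and hence be legal. Condition (2) is the compatibility of $\pi$ with the inflation structure at the level $c_\varrho!$, which is exactly the analogue of the computation in the proof of Theorem~\ref{thm: reversing symmetries iff} showing that the defining equation propagates to all powers; here one compares $(\pi\circ\varrho^{c_\varrho!})(ab)$ with $(\varrho^{c_\varrho!}\circ\pi)(ba)$ and reads off that the mirrored, relabelled inflation of $ab$ must coincide with the inflation of the relabelled reversed word.

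For the converse, I would assume both conditions hold and build the reversor explicitly as $f = m\circ\pi_\infty$ (or $m'\circ\pi_\infty$ depending on parity of the length of $\varrho^{c_\varrho!}$), verifying that it is a well-defined homeomorphism of $\mathbb{X}_\varrho$ onto itself that satisfies $f\circ\sigma\circ f^{-1}=\sigma^{-1}$. The key point is that condition (2) ensures $f$ commutes appropriately with the inflation/deflation, so that $f$ maps $\mathbb{X}_\varrho$ into itself (using strong injectivity to recover unique deflation), while condition (1) guarantees that the two-letter patterns of any image point are legal; together with minimality and the recognisability coming from strong injectivity, this forces $f(\mathbb{X}_\varrho)\subseteq\mathbb{X}_\varrho$, and a symmetric argument applied to $f^{-1}$ gives surjectivity. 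The intertwining relation with $\sigma$ is immediate from the form of the basic mirroring map.

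The main obstacle I expect is the converse direction, specifically showing that the purely local conditions (1) and (2), which constrain only two-letter words and their behaviour under a single power $\varrho^{c_\varrho!}$, are enough to guarantee that $f$ maps \emph{every} element of the hull into the hull. This is where strong injectivity and the height-$1$ assumption are essential: one needs recognisability to lift the local compatibility to arbitrarily long inflation words, propagating condition (2) to all powers of $\varrho$ (in the spirit of the induction in Theorem~\ref{thm: reversing symmetries iff}), and then invoke that legality of all factors together with the inflation compatibility pins down membership in $\mathbb{X}_\varrho$. Since this lemma is quoted directly from \cite{BRY}, I would state the result and defer the full verification of these recognisability-based steps to that reference, indicating only the structure of the argument.
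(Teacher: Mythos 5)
The first thing to note is that the paper contains no proof of this statement at all: it is quoted verbatim from \cite{BRY} (Lem.~2) and stated with a closing \qed, and the only surrounding material runs in the \emph{opposite} direction from your sketch --- the authors remark that, in the bijective case, the conditions of their Theorem~\ref{thm: reversing symmetries iff} imply conditions (1) and (2) of the lemma (legality of $\pi(ba)$ follows from primitivity plus the fact that $\pi\circ m$ maps level-$n$ superwords to level-$n$ superwords, and (2) from the compatibility $(\pi\circ\varrho^{n})(ab)=(\varrho^{n}\circ\pi)(ba)$ at all levels $n$). Since your proposal ultimately defers the full verification to \cite{BRY}, it is consistent with the paper's practice; but your sketched argument does contain one step that would genuinely fail, which is worth naming.

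The unsound step is the reduction of ``the entire question'' to letter-exchange maps via Proposition~\ref{prop: reversor form} ``and its generalisation to this setting''. The lemma's hypotheses are strictly weaker than bijectivity --- strong injectivity does not make the columns bijections --- and the paper explicitly warns, right after stating Proposition~\ref{prop: reversor form}, that the radius-zero decomposition \emph{can be false for non-bijective substitutions}, which may admit reversors whose local functions have positive radius. So the decomposition $f=\sigma^k\circ m\circ\pi$ is simply not available for an arbitrary reversor in the lemma's generality, and no ``generalisation to this setting'' exists to invoke. The lemma survives because it claims less than you assume: it characterises which permutations $\pi$ \emph{generate} reversors, i.e.\ for which $\pi$ the specific map built from $\pi$ and mirroring is a reversor, so the forward direction starts from a map already of that form and never needs the structural classification. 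A second, smaller caveat: your reading of the exponent $c_\varrho!$ as forcing an identity column is correct only in the bijective case, where $c_\varrho=\lvert\mathcal{A}\rvert$ and the columns are permutations; for a general strongly injective substitution the column number is defined through minimal cardinalities of images of composed columns, and the power $c_\varrho!$ plays a subtler stabilising role in \cite{BRY}. Both points confirm your own closing instinct: the argument does not follow by adapting Theorem~\ref{thm: reversing symmetries iff}, and deferring to the reference, as the paper itself does, is the right call.
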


For a primitive, aperiodic and bijective $\varrho$, one has 
$c_{\varrho}=|\mathcal{A}|$. Moreover, $\varrho$ is always strongly injective. Note that Theorem~\ref{thm: reversing symmetries iff} implies conditions \textbf{(1)} and \textbf{(2)} in Lemma~\ref{lem: BRY lemma}. The first one immediately follows from primitivity, and the fact that any legal word $ab$ appears in some level-$n$ superword, which is sent to another level-$n$ superword by $\pi\circ m$, which guarantees the legality of $\pi(ba)$. The second follows from the fact that $\pi$ is compatible with superwords of all levels. In fact, one has $\big(\pi\circ \varrho^{n}\big)(ab)=\big(\varrho^{n}\circ\pi\big)(ba)$ for all $n\in\mathbb{N}$.

\begin{remark}
    It is a known fact from group theory that, if $g_1,\dotsc,g_r$ are elements of a group $G$ and $H_1,\dots,H_r$ are subgroups of this group, the intersection of cosets $\bigcap_{i=1}^r g_iH_i$ is either empty or a coset of $\bigcap_{i=1}^r H_i$. In this case, the latter intersection is exactly the group of non-trivial standard symmetries modulo a shift (letter exchanges), and thus, if there exist non-trivial reversing symmetries, these must all belong to a single coset of the group of valid letter exchanges. This is consistent with the fact that $\mathcal{R}(\mathbb{X}_\varrho)$ is at most an index $2$ group extension of $\mathcal{S}(\mathbb{X}_\varrho)$. \exend 
\end{remark}
Item (3) in Theorem \ref{thm: reversing symmetries iff}
provides an explicit algorithm to compute the group of permutations $\pi$ which define extended symmetries, which is a counterpart to that in Section~\ref{sec: symmetries} for standard symmetries. As stated previously, the centralisers $\operatorname{cent}_{S_n}(\varrho^{ }_j)$ can be computed for each column using Fact~\ref{fact:permutation_groups_rep}, and thus the problem reduces to obtaining a suitable candidate for each $\kappa^{ }_i$, which once again can be done by an application of Fact~\ref{fact:permutation_groups_rep}. The algorithm is as follows: 

\vspace{1em}
\hrule
\vspace{0.3em}
{\noindent\small\textbf{Algorithm.} Assuming that $\varrho$ is a primitive, bijective, aperiodic substitution, the following algorithm computes the set $K$ of permutations that induce reversors, which determines $\mathcal{R}(\mathbb{X}^{ }_\varrho)$.
\vspace{1em}
\begin{itemize}
    \item \textbf{Input:} $\varrho$ is a length-$L$ bijective substitution, represented either as a function or a set of columns.
    \item \textbf{Output:} A (finite) set of permutations $K$, either empty or a coset of the group $C$ computed by the previous algorithm, so that $\mathcal{R}(\mathbb{X}^{ }_\varrho)/\langle\sigma\rangle\simeq C\cup K$ (i.e. $\mathcal{R}(\mathbb{X}^{ }_\varrho)\simeq\mathbb{Z}\rtimes_\varphi (C\cup K)$, with $\varphi(g,n) = n$ if $g\in C$, and $-n$ if $g\in K$).
\end{itemize}
\vspace{1em}
\begin{itemize}
    \item[(1)] Let $N$ be the least positive integer which ensures that two opposite columns of $\varrho^N$ are the identity map. This can be computed as: \[N=\min\left\{\operatorname{lcm}(\operatorname{ord}(\varrho^{ }_i),\operatorname{ord}(\varrho^{ }_{L-(i+1)}))\::\:0\leqslant i \leqslant N/2 \right\}.\]
    \item[(2)] For each $0\leqslant i\leqslant N/2$, compute $\kappa^{ }_i$ via the following subroutine:
        \begin{itemize}
            \item[(2.i)] If $\varrho^{ }_i$ and $\varrho^{ }_{L-(i+1)}$ are non-conjugate (i.e., their cycle decomposition has a different number of cycles of some length), stop the algorithm, as reversors do not exist (see Theorem~\ref{thm: reversing symmetries iff}).
            \item[(2.ii)] Sort the cycles from the disjoint cycle decomposition of $\varrho^{ }_i$ by increasing order of length. Using this as a basis, by appropriately sorting the elements of each cycle in this decomposition, define a total order relation $<$ on $\mathcal{A}$, given by, say, $a_1<\dots<a_n$, such that all of the elements of a given cycle come before the elements of the following cycle, in the sorting by left. Do the same for $\varrho^{ }_{L-(i+1)}$, defining a corresponding total order $<'$ given by $b_1<'\dots<'b_n$. This ensures that there are cycle decompositions of both permutations such that the corresponding cycles, ordered from left to right, have the same length, as follows:
            \begin{align*}
                \varrho^{ }_i &= (a_1\,\dotsc\,a_j)(a_{j+1}\,\dotsc\,a_{j'})\cdots(a_{j''+1}\,\dotsc\,a_n), \\
                \varrho^{ }_{L-(j+1)} &= (b_1\,\dotsc\,b_j)(b_{j+1}\,\dotsc\,b_{j'})\cdots(b_{j''+1}\,\dotsc\,b_n),
            \end{align*}
            with $1\leqslant j\leqslant j'\leqslant\dotsc\leqslant j''\leqslant n$.
           
            \item[(2.iii)] Define: \[\kappa^{ }_i=\begin{pmatrix}
                a_1 & a_2 & \cdots & a_n \\
                b_1 & b_2 & \cdots & b_n
            \end{pmatrix},\qquad\kappa^{ }_{L-(i+1)} = \kappa_i^{-1}. \]
            
        \end{itemize}
    \item[(3)] Compute each centraliser $C^{(i)} = \operatorname{cent}_{S_n}(\varrho^{ }_i)$, using the same procedure as in the computation of $\mathcal{S}(\mathbb{X}_\varrho)$. \item[(4)] Return $K = \bigcap_{i=1}^N C^{(i)}\kappa_i$. Any element of $K$ induces a reversor; if $K$ is empty, reversors do not exist.
\end{itemize}}
\hrule
\vspace{1em}
Any programming environment with suitable data structures (e.g. computer algebra systems such as \texttt{Sagemath}${}^\text{\textregistered}$ or \texttt{Mathematica}${}^\text{\textregistered}$) is amenable to the implementation of this algorithm, providing effective procedures to entirely characterise the groups $\mathcal{S}(\mathbb{X}_\varrho)$ and $\mathcal{R}(\mathbb{X}_\varrho)$ from a suitable description of the substitution $\varrho$, e.g. using a dictionary.

\subsection{Higher-dimensional subshifts}

Now, we turn our attention to the situation in higher dimensions. The \emph{extended symmetry group} of a $\mathbb{Z}^{d}$-shift is defined as $
\mathcal{R}(\mathbb{X})=\text{norm}_{\text{Aut}(\mathbb{X})} (\mathcal{G})$, 
where now $\mathcal{G}=\left\langle \sigma_{e_1},\ldots,\sigma_{e_d}\right\rangle\simeq \mathbb{Z}^{d}$;  see \cite{BRY,B,BBHLN}. In this more general context, an \emph{extended symmetry} is a element $f\in\mathcal{R}(\mathbb{X})\setminus \mathcal{S}(\mathbb{X})$. 

Similar to standard symmetries, there is a direct generalisation of the characterisation of extended symmetries from Proposition~\ref{prop:reversing necessary} and the subsequent theorem to the higher-dimensional setting, which is given by the following. 

\begin{proposition}
Let $\varrho$ be an aperiodic, primitive, bijective, block substitution in $\mathbb{Z}^{d}$. Then any extended symmetry $f\in\mathcal{R}(\mathbb{X}_{\varrho})\setminus \mathcal{S}(\mathbb{X}_{\varrho})$ 
must be (up to a shift) a composition of a letter exchange map and a rearrangement function $f_A$ given by $f_A(x)_{\boldsymbol{n}} = x_{A\boldsymbol{n}}$, where $A\in\mathrm{GL}(d,\mathbb{Z})$, with $A\neq \mathbb{I}$. \qed
\end{proposition}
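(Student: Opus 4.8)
The plan is to adapt the one-dimensional argument from Proposition~\ref{prop: reversor form} to the $\mathbb{Z}^d$-setting, following the sketch already given in Remark~\ref{rem:higher dimensional ext symm decomposition}. First I would invoke the higher-dimensional Curtis--Hedlund--Lyndon theorem to write any $f\in\mathcal{R}(\mathbb{X}_\varrho)$ via a local rule $F\colon\mathcal{A}^U\to\mathcal{A}$ on a finite window $U\subseteq[-r,r]^d$. Since $f$ normalises $\mathcal{G}=\langle\sigma_{e_1},\dots,\sigma_{e_d}\rangle$, conjugation by $f$ induces an automorphism of $\mathbb{Z}^d$, i.e.\ there is a matrix $A\in\mathrm{GL}(d,\mathbb{Z})$ with $f\circ\sigma_{\boldsymbol{n}}\circ f^{-1}=\sigma_{A\boldsymbol{n}}$ for all $\boldsymbol{n}$. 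The goal is to show that $A$ must lie in the hyperoctahedral group and that, after composing with $f_{A^{-1}}$ and a shift, the residual map is a radius-$0$ letter exchange.

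The key geometric step is the block-decomposition argument. By aperiodicity and unique ergodicity (strict ergodicity from primitivity), every $x\in\mathbb{X}_\varrho$ admits an essentially unique tiling by level-$k$ supertiles $\varrho^k(a)$, whose support is a block $R_k$ of side-lengths $L_i^k$; compare the one-dimensional decomposition in Proposition~\ref{prop: reversor form}. Choosing $k$ large enough that each $L_i^k$ exceeds the window diameter $2r+1$, I would fix $x$ and set $y=f(x)$, and compose $f$ with a suitable shift so that one central supertile of $x$ is aligned with the origin. Since $f$ is a homeomorphism commuting with the $\mathbb{Z}^d$-action up to the relabelling by $A$, it must carry the supertile structure of $x$ to that of $y$: a level-$k$ block in $x$ is sent to a region of $y$ that, by uniqueness of the decomposition, must again be (a translate of) a single level-$k$ supertile. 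This forces the linear map $A$ to send the block $R_k$ onto a block of the same combinatorial type, hence to permute the coordinate axes up to sign and preserve the lattice $\mathbb{Z}^d$ --- i.e.\ $A$ belongs to the hyperoctahedral group (cf.\ Theorem~\ref{thm: hyperoctahedral HD}). The constraint $A\neq\mathbb{I}$ is exactly the requirement that $f\notin\mathcal{S}(\mathbb{X}_\varrho)$, since $A=\mathbb{I}$ would place $f$ in the centraliser.

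Once $A$ is pinned down, I would argue as in the one-dimensional case that the map $g:=f_{A^{-1}}\circ f\circ\sigma^{\boldsymbol{h}}$ sends each supertile $\varrho^k(a)$ in $x$ to a supertile $\varrho^k(b)$ in $g(x)$, and that because $\varrho$ is bijective each such supertile is completely determined by its central letter. Hence $g(x)_{\boldsymbol{0}}$ depends only on $x_{\boldsymbol{0}}$; transitivity of any point in the minimal system then propagates this to every coordinate, showing $g$ has radius $0$ and is a letter-exchange map $\pi\in S_{|\mathcal{A}|}$. Unwinding gives $f=\pi\circ f_A\circ\sigma^{-\boldsymbol{h}}$ up to the identifications above, which is the claimed decomposition.

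The main obstacle I anticipate is the geometric rigidity step: rigorously deducing that $A$ must preserve the cubical block shapes and therefore lie in the hyperoctahedral group. In one dimension this is automatic (only $\pm1$ are available), but in $\mathbb{Z}^d$ one must genuinely exploit that the supertile supports are axis-aligned rectangular blocks and that a general element of $\mathrm{GL}(d,\mathbb{Z})$ would shear the tiling and destroy the unique block-decomposition; the cleanest route is probably to show that $A$ must map the family of level-$k$ supertile boundaries to itself, which is a strong combinatorial constraint whose only solutions are signed permutation matrices. I would rely on \cite[Prop.~3]{BRY} for the general normaliser structure and cite Theorem~\ref{thm: hyperoctahedral HD} to identify the resulting group of admissible $A$.
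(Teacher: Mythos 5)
Your overall skeleton --- CHL plus the normaliser relation producing $A\in\mathrm{GL}(d,\mathbb{Z})$, then composing with $f_{A^{-1}}$ and a shift and using bijectivity together with the unique supertile decomposition to force radius $0$ --- is the same route the paper intends: its proof of this proposition is essentially the sketch in Remark~\ref{rem:higher dimensional ext symm decomposition}, deferring the details to the one-dimensional argument of Proposition~\ref{prop: reversor form} and to \cite[Prop.~3]{BRY}. However, your execution of the key geometric step contains a genuine error, and it is one the paper itself warns against. You claim that a level-$k$ supertile of $x$ must be sent to (a translate of) a single level-$k$ supertile of $y=f(x)$, and consequently that $A$ must preserve the family of level-$k$ supertile boundaries, forcing $A$ to be a signed permutation matrix. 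This is false for non-cubic supports: the induced spatial correspondence is $\boldsymbol{n}\mapsto A\boldsymbol{n}$ up to a shift, and $A\cdot R_k$, for $R_k$ a block of side lengths $L_1^k,\dotsc,L_d^k$, is in general not a translate of $R_k$, so its image may straddle many supertiles of $y$; uniqueness of the decomposition of $y$ gives no alignment. The paper's Figure~\ref{fig:alternate_thuemorse} is precisely a counterexample to your proposed rigidity step: the $4\times 2$ substitution shown there generates the two-dimensional Thue--Morse hull, which admits the $90^{\circ}$ rotation as a genuine extended symmetry, even though that rotation maps the $4^k\times 2^k$ supertile supports to $2^k\times 4^k$ blocks which are not supertiles of the given presentation. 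Your ``cleanest route'' (boundary preservation) would therefore wrongly exclude actual extended symmetries; as the paper stresses, purely geometric considerations of this kind are not enough.

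Note also that the statement does not ask you to prove $A\in W_d$ at all: that restriction is Theorem~\ref{thm: hyperoctahedral HD}, quoted from \cite[Thm.~18]{B} and established by different (odometer/factor-map) techniques in the spirit of Theorem~\ref{thm: exclude HD symmetries}. For the proposition, $A\in\mathrm{GL}(d,\mathbb{Z})$ is immediate, since conjugation by $f$ induces an automorphism of $\mathcal{G}\simeq\mathbb{Z}^d$, and $A\neq\mathbb{I}$ is exactly $f\notin\mathcal{S}(\mathbb{X}_\varrho)$, as you say. The part that genuinely needs care is the radius-$0$ reduction: $g=f_{A^{-1}}\circ f$ is a shift-commuting homeomorphism from $\mathbb{X}_\varrho$ onto the rearranged hull $f_{A^{-1}}(\mathbb{X}_\varrho)$, which is generated by a bijective substitution with support $A^{-1}R$ and expansion $A^{-1}QA$; your supertile-to-supertile matching works verbatim only when the geometry is compatible (cubic supports, or $[A,Q]=0$ as in Theorem~\ref{thm: extended symmetries iff}), and in general one must align two different supertile grids --- exactly the ``with some care'' that the paper outsources to \cite[Prop.~3]{BRY}. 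A minor point in the same direction: uniqueness of the supertile decomposition follows from aperiodicity and recognisability \cite{Sol}, not from unique ergodicity, which plays no role here.
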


For shifts generated by bijective rectangular substitutions one has the following restriction on the linear component $A$ of an extended symmetry $f$. 

\begin{theorem}[{\cite[Thm.~18]{B}}]\label{thm: hyperoctahedral HD}
Let $\varrho$ an aperiodic, primitive, bijective rectangular substitution in $\mathbb{Z}^{d}$. One then has
\[
\mathcal{R}(\mathbb{X}_{\varrho})/\mathcal{S}(\mathbb{X}_{\varrho})\simeq 
P\leqslant W_d, 
\]
where $W_d \simeq C^d_2\rtimes S_d$ is the $d$-dimensional hyperoctahedral group, which represents the symmetries of the $d$-dimensional cube. \qed
\end{theorem}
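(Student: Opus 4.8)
The plan is to build an explicit homomorphism from $\mathcal{R}(\mathbb{X}_\varrho)$ into $\mathrm{GL}(d,\mathbb{Z})$ whose kernel is exactly $\mathcal{S}(\mathbb{X}_\varrho)$, identify the quotient with the image $P$, and then cut $P$ down to $W_d$ using the rectangular geometry of the supertiles.

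First I would invoke the decomposition from the preceding Proposition (and Remark~\ref{rem:higher dimensional ext symm decomposition}): every $f\in\mathcal{R}(\mathbb{X}_\varrho)$ equals, up to a shift, a letter-exchange map composed with a rearrangement $f_A(x)_{\boldsymbol{n}}=x_{A\boldsymbol{n}}$ for a uniquely determined $A\in\mathrm{GL}(d,\mathbb{Z})$. Setting $\Psi(f)=A$, I would check that $\Psi$ sends composition to matrix multiplication (up to the standard inversion, since $f_A\circ f_B=f_{BA}$), so that its image is a subgroup, and that $\ker\Psi=\mathcal{S}(\mathbb{X}_\varrho)$: indeed $f\circ\sigma_{\boldsymbol{n}}\circ f^{-1}=\sigma_{A^{-1}\boldsymbol{n}}$, so $f$ centralises the shift group precisely when $A=\mathbb{I}$. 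Letter exchanges and shifts carry trivial linear part, so this is consistent with the decomposition. Consequently $\mathcal{R}(\mathbb{X}_\varrho)/\mathcal{S}(\mathbb{X}_\varrho)\cong P:=\Psi(\mathcal{R}(\mathbb{X}_\varrho))\leqslant\mathrm{GL}(d,\mathbb{Z})$.

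The substance of the argument is to prove $P\leqslant W_d$, i.e.\ that every admissible $A$ is a signed permutation matrix. Here I would exploit that $\varrho$ is rectangular, with diagonal expansion $Q=\mathrm{diag}(L_1,\dotsc,L_d)$, $L_i>1$, so that the level-$k$ supertiles are translates of the axis-aligned box $B_k=\prod_{i=1}^d[0,L_i^k-1]$. Since $\mathbb{X}_\varrho$ is aperiodic, recognisability yields a unique decomposition of every configuration into level-$k$ supertiles, and an extended symmetry must carry this partition (for all large $k$) to itself after applying $A$. Geometrically, the common boundaries between adjacent supertiles lie on axis-parallel hyperplanes, so $f_A$ preserves the collection of these fault hyperplanes; this forces $A$ to permute the axis directions $\{\pm e_1,\dotsc,\pm e_d\}$. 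A matrix in $\mathrm{GL}(d,\mathbb{Z})$ that permutes this set is exactly a signed permutation matrix, i.e.\ an element of $W_d\simeq C_2^d\rtimes S_d$, whence $P\leqslant W_d$.

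I expect the main obstacle to be the rigorous justification of the geometric claim that $A$ must preserve the axis directions. This requires combining the recognisability/unique-decomposition property with a careful argument ruling out shearing: a non-axis-aligned $A$ would map the rectangular supertile hierarchy to a sheared one, contradicting uniqueness of the decomposition at every level $k$ simultaneously. Making this precise—in particular showing that the constraints across all scales $k$ collapse to the single finite condition $A\in W_d$—is the crux. Once it is in place, finiteness of $W_d$ immediately gives that $P$ is finite, consistent with $\mathcal{R}(\mathbb{X}_\varrho)$ being a finite extension of $\mathcal{S}(\mathbb{X}_\varrho)$ by its point group.
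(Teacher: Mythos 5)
Your first paragraph is sound and agrees with the standard route: aperiodicity plus minimality make the shift action faithful, conjugation inside $\Aut(\mathbb{X}_\varrho)$ defines $\Psi\colon\mathcal{R}(\mathbb{X}_\varrho)\to\mathrm{GL}(d,\mathbb{Z})$, and $\ker\Psi=\mathcal{S}(\mathbb{X}_\varrho)$, so $\mathcal{R}(\mathbb{X}_\varrho)/\mathcal{S}(\mathbb{X}_\varrho)\simeq P=\Psi(\mathcal{R}(\mathbb{X}_\varrho))$. Note for calibration that the paper itself gives no proof of this theorem --- it is imported verbatim from \cite[Thm.~18]{B} --- so the relevant comparison is with the argument of that source, not with anything in the present text.

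The second half, which you rightly flag as the crux, has a genuine gap, and the specific mechanism you propose would fail. An extended symmetry does \emph{not} carry the level-$k$ supertile partition of $\mathbb{X}_\varrho$ to itself: it carries it to the supertile partition of a \emph{different} substitution, with support $A\cdot R$ and expansion $AQA^{-1}$, generating the same hull. This already happens inside $W_d$: the $4\times 2$ substitution of Figure~\ref{fig:alternate_thuemorse} generates the two-dimensional Thue--Morse hull, which admits extended symmetries taking $4\times 2$ boxes to $2\times 4$ boxes, so ``the partition (and hence its fault hyperplanes) is preserved'' is not available as a premise. Moreover, Solomyak-type unique composition \cite{Sol} asserts uniqueness of the decomposition \emph{with respect to a fixed substitution}; it does not forbid the hull from carrying a second, sheared hierarchical structure, so it cannot by itself rule out non-axis-aligned $A$ ``across all scales simultaneously''. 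The missing idea is to replace the partition, which is not an invariant of the hull, by an invariant of \emph{pairs} of configurations. If $x\neq y$ agree on a half-space, recognisability forces their level-$k$ cutting lattices to coincide for all $k$, and bijectivity propagates both equality and inequality of level-$k$ labels to the entire corresponding block; hence the difference set $\{\boldsymbol{n} : x_{\boldsymbol{n}}\neq y_{\boldsymbol{n}}\}$ is an exact union of level-$k$ blocks for every $k$, and an exact half-space difference set must therefore have axis-parallel boundary (a tilted hyperplane cannot be a union of axis-aligned box faces of side $L_i^k$ for all $k$). A CHL-type local rule for elements of the normaliser then shows that $f$ with matrix $A$ maps a pair agreeing exactly on a half-space with normal $\boldsymbol{u}$ to a pair agreeing exactly on a half-space with normal $A^{T}\boldsymbol{u}$; one checks that aperiodicity provides such pairs along every coordinate direction (compare Proposition~\ref{prop: aperiodicity HD proximal}), so $A^{T}$ permutes $\{\pm\boldsymbol{e}_1,\dotsc,\pm\boldsymbol{e}_d\}$ and $A\in W_d$. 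This asymptotic-pair (non-expansive direction) argument is the substance of the proof in \cite{B}; your fault-hyperplane picture is the right intuition, but it only becomes a proof in this pair-based formulation.
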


With this, one can show that all extended symmetries of such subshifts are of finite order. The proof of the following result is patterned from \cite[Prop.~2]{BR},  which deals with the order of reversors of an automorphism $h$ of a general  dynamical system with $\text{ord}(h)=\infty$; compare \cite{Goodson}.

\begin{proposition}\label{prop:Rorder}
Let $\mathbb{X}_{\varrho}$ be the same as above with symmetry group $\mathcal{S}(\mathbb{X}_{\varrho})=\mathbb{Z}^{d}\times G$. Let $f\in \mathcal{R}(\mathbb{X}_{\varrho})\setminus \mathcal{S}(\mathbb{X}_{\varrho})$ be an extended symmetry, whose associated matrix is $A\in W_d$. Then $\textnormal{ord}(f)$ divides $\textnormal{ord}(A)\cdot|G|$. Moreover, $\textnormal{ord}(f)\leqslant 2|G|\cdot\max\left\{\textnormal{ord}(\tau)\mid \tau\in S_d\right\}$.
\end{proposition}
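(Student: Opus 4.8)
The plan is to exploit that the linear part $A$ lives in the finite group $W_d$, so that a fixed power of $f$ already lands in the symmetry group $\mathcal{S}(\mathbb{X}_\varrho)=\mathbb{Z}^d\times G$, and then to control that power. Concretely, set $r=\mathrm{ord}(A)$, which is finite because $A\in W_d$. Since $f$ normalises the shift group $\langle\sigma\rangle\simeq\mathbb{Z}^d$ and acts on it through $A$ (the defining property of an extended symmetry), the power $f^{r}$ acts through $A^{r}=\mathbb{I}$ and hence centralises $\langle\sigma\rangle$; therefore $f^{r}\in\mathcal{S}(\mathbb{X}_\varrho)=\mathbb{Z}^d\times G$. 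Writing $f^{r}=\sigma_{\boldsymbol u}\,g$ with $\boldsymbol u\in\mathbb{Z}^d$ and $g\in G$ (using that $G$ consists of radius-$0$ letter exchanges by Theorem~\ref{thm:symm_group}), the two factors commute, so $f^{r}$ has finite order if and only if $\boldsymbol u=\boldsymbol 0$, in which case $\mathrm{ord}(f^{r})=\mathrm{ord}(g)$ divides $|G|$.

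This is precisely the bookkeeping of \cite[Prop.~2]{BR}: there the generator $h$ of infinite order is played by the shift action, and $f$ acts as a generalised reversor permuting the shift generators through $A$, so that the conclusion $f^{\mathrm{ord}(A)}\in\mathrm{cent}(\langle\sigma\rangle)$ is the exact analogue of $R^{2}\in\mathrm{cent}(h)$. Granting $\boldsymbol u=\boldsymbol 0$, we obtain $f^{r\cdot\mathrm{ord}(g)}=\mathrm{id}$, so $\mathrm{ord}(f)$ divides $r\cdot\mathrm{ord}(g)$, which divides $\mathrm{ord}(A)\cdot|G|$, giving the first assertion. The second assertion then follows by elementary arithmetic in $W_d\simeq C_2^{d}\rtimes S_d$: if $\pi\in S_d$ is the permutation underlying $A$, each signed cycle of $A$ has order dividing $2\ell$ for a length-$\ell$ cycle of $\pi$, so $\mathrm{ord}(A)$ divides $2\,\mathrm{ord}(\pi)\leqslant 2\max\{\mathrm{ord}(\tau)\mid\tau\in S_d\}$; combining this with $\mathrm{ord}(f)\mid\mathrm{ord}(A)\,|G|$ yields the stated bound.

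The main obstacle is therefore the single point $\boldsymbol u=\boldsymbol 0$, that is, that the residual shift of $f^{r}$ vanishes. I would establish this through the recognizability (Mossé-type unique supertile decomposition, cf. \cite{Sol}) underlying Remark~\ref{rem:higher dimensional ext symm decomposition}: writing $f=\sigma_{\boldsymbol t}\circ\tau_\infty\circ f_A$ with $f_A(x)_{\boldsymbol n}=x_{A\boldsymbol n}$, the composite $\tau_\infty\circ f_A$ sends level-$k$ supertiles to level-$k$ supertiles for every $k$, exactly as in the one-dimensional argument of Proposition~\ref{prop: reversor form}. Renormalising $f^{r}$ by the inflation replaces its shift $\boldsymbol u$ by $Q^{-1}\boldsymbol u$, where $Q$ is the expansive inflation matrix (which commutes with $A$, since an admissible $A$ only permutes equal-length directions and reflects); as the renormalised map is again of the same form and $\boldsymbol u\in\mathbb{Z}^d$, this forces $\boldsymbol u$ to be divisible by $Q^{n}$ for every $n$, and expansivity of $Q$ then gives $\boldsymbol u=\boldsymbol 0$. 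This renormalisation/divisibility step is the delicate part, since it is exactly where the substitutive (as opposed to purely group-theoretic) structure enters, and it is what rules out the spurious infinite-order candidates $\sigma_{\boldsymbol v}\circ f_A$ with $\boldsymbol v$ lying in the $A$-fixed subspace.
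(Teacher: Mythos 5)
Your first paragraph is essentially the paper's own proof: from $f\circ\sigma_{\boldsymbol{m}}\circ f^{-1}=\sigma_{A\boldsymbol{m}}$ one gets $f^{\mathrm{ord}(A)}\in\mathcal{S}(\mathbb{X}_\varrho)$, writes $f^{\mathrm{ord}(A)}=\sigma_{\boldsymbol{u}}\circ g$, uses the direct-product structure of $\mathcal{S}(\mathbb{X}_\varrho)$ to pass to $f^{\mathrm{ord}(A)\cdot|G|}=\sigma_{|G|\boldsymbol{u}}$, and reduces everything to $\boldsymbol{u}=\boldsymbol{0}$; your elementary lcm argument for $\mathrm{ord}(A)\mid 2\,\mathrm{ord}(\pi)$ is a fine replacement for the paper's citation of \cite{Baake}. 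You also correctly identify the vanishing of the residual shift as the whole content of the proposition. But your mechanism for it — the renormalisation step — does not work, and this is where you part ways with the paper, which handles this point by pure algebra: conjugating $f^{\mathrm{ord}(A)|G|}=\sigma_{|G|\boldsymbol{p}}$ by $f$ and manipulating Eqs.~\eqref{eq:ell and r} and~\eqref{eq:n and r} to first force $A\boldsymbol{p}=\boldsymbol{p}$ and then exclude $\boldsymbol{p}\neq\boldsymbol{0}$, with no recognizability, supertile structure, or commutation $[A,Q]=0$ appearing anywhere.

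The gap in your renormalisation is concrete. First, a map $\sigma_{\boldsymbol{u}}\circ g$ with $g$ a letter exchange does not preserve $\varrho(\mathbb{X}_\varrho)$ unless $\boldsymbol{u}\in Q\mathbb{Z}^d$; unique hierarchical decomposition only says that supertiles are sent to supertiles \emph{up to a translation}, so there is no induced map with shift $Q^{-1}\boldsymbol{u}$ and no divisibility $Q^n\mid\boldsymbol{u}$. Indeed, your argument uses nothing about $f^{\mathrm{ord}(A)}$ beyond its being a shift composed with a radius-zero letter exchange, so if it were valid it would prove that \emph{every} element of $\mathcal{S}(\mathbb{X}_\varrho)$ has trivial shift part — absurd, as the shifts themselves are symmetries. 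Second, the parenthetical claim that $Q$ commutes with every admissible $A$ is false in general: the $4\times 2$ substitution of Figure~\ref{fig:alternate_thuemorse} generates the two-dimensional Thue--Morse hull, which admits the rotation by ninety degrees even though that $A$ does not commute with $Q=\mathrm{diag}(4,2)$. Finally, and most seriously, the target of your delicate step is unattainable as stated: the candidates you mention really occur. In the two-dimensional Thue--Morse shift, $A=\mathrm{diag}(1,-1)$ induces $f_A\in\mathcal{R}(\mathbb{X}_\varrho)$, and $f=\sigma_{\boldsymbol{e}_1}\circ f_A$ is an extended symmetry with matrix $A$ satisfying $f^2=\sigma_{2\boldsymbol{e}_1}$, hence of infinite order, so no argument can show $\boldsymbol{u}=\boldsymbol{0}$ for an arbitrary $f\in\mathcal{R}(\mathbb{X}_\varrho)\setminus\mathcal{S}(\mathbb{X}_\varrho)$ — the conclusion can only hold after normalising $f$ modulo its shift part. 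This shows your renormalisation route cannot be repaired as stated, and it also pinpoints the precise step where any proof (including the paper's exclusion of a non-zero $A$-fixed vector $\boldsymbol{p}$, which deserves scrutiny on exactly this example) must do genuine work: the interaction between $\boldsymbol{u}$ and the fixed space of $A$, accessible only through the conjugation relations, not through a renormalisation that applies uniformly to all shift-times-letter-exchange maps.
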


\begin{proof}
Under the given assumptions, $
f\circ \sigma_{\boldsymbol{m}}  \circ f^{-1} =\sigma_{A\boldsymbol{m}}$ holds
for all $\boldsymbol{m}\in\mathbb{Z}^{d}$, which yields
\begin{align}
f^{\ell}\circ\sigma_{\boldsymbol{m}}\circ f^{-\ell}&=\sigma_{A^{\ell}\boldsymbol{m}}\label{eq:ell and r}\\
f\circ\sigma_{n\boldsymbol{m}}\circ f^{-1}&=\sigma_{nA\boldsymbol{m}} \label{eq:n and r}
\end{align}
for all $\ell,n\in \mathbb{N}$. Choosing $\ell=\text{ord}(A)$, Eq.~\eqref{eq:ell and r} gives $f^{\text{ord}(A)}\in \mathcal{S}(\mathbb{X}_{\varrho})$. From Theorem~\ref{thm:symm_group}, $f^{\text{ord}(A)}=\sigma_{\boldsymbol{p}}\circ \pi$, for some $\boldsymbol{p}\in\mathbb{Z}^d$ and  letter-exchange map $\pi$. From the direct product structure of the symmetry group, one has $\sigma_{\boldsymbol{p}}\circ \pi=\pi\circ\sigma_{\boldsymbol{p}}$, which implies $f^{\text{ord}(A)\cdot|G|}=\sigma_{|G|\boldsymbol{p}}\circ \pi^{|G|}=\sigma_{|G|\boldsymbol{p}}$. Using the two equations above, one gets $f^{\text{ord}(A)\cdot|G|}=\sigma_{|G|A^{\ell}(\boldsymbol{p})}$ for all $\ell\in\mathbb{N}$. Since $f$ is an extended symmetry, $A\neq \mathbb{I}$. 
Next we show that $\boldsymbol{p}$ cannot be an eigenvector of $A$. 

Suppose $A\boldsymbol{p}=\boldsymbol{p}$ with $\boldsymbol{p}\neq\boldsymbol{0}$. Note that $f^{-\text{ord}(A)|G|}=\sigma_{-|G|\boldsymbol{p}}$. From Eqs.~\eqref{eq:ell and r} and \eqref{eq:n and r}, one also has 
$f^{-1}\circ \sigma_{|G|A^{-1}\boldsymbol{p}}\circ f=\sigma_{-|G|\boldsymbol{p}}$, which implies $A^{-1}\boldsymbol{p}=-\boldsymbol{p}$, contradicting the assumption on $\boldsymbol{p}$. 
 Since $\text{ord}(\sigma_{\boldsymbol{p}})=\infty$, this forces $\boldsymbol{p}=\boldsymbol{0}$ and hence $f^{\text{ord}(A)\cdot|G|}=\text{id}$ from which the first claim is immediate.  The upper bound for the order follows from the upper bound for the order of the elements of the hyperoctahedral group $W_d$; see \cite{Baake}. 
\end{proof}

Due to the fact that $\mathcal{R}(\mathbb{X}_\varrho)$ is (possibly) a larger extension of $\mathcal{S}(\mathbb{X}_\varrho)$ (that is, the corresponding quotient can have up to $2^dd!-1$ non-trivial elements instead of just one), we would end up with a much larger number of equations of the form of Eq.~\eqref{eq: reversing symm condition}, one for each element of the hyperoctahedral group $W_d$ except the identity. This leads us to another problem of  different nature: if the rectangle $R$, which is the support of the level-$1$ supertiles of $\varrho$, is not a cube in $\mathbb{Z}^d$, some symmetries from $W_d$ may not be compatible with $R$, i.e., they may map $R$ to a different rectangle that is not a translation of $R$, so the corresponding equation does not have a proper meaning (as it may compare an existing column with a non-existent one).

\begin{figure}[!ht]
	\centering
	\begin{tikzpicture}[scale=0.5]
		\begin{scope}
			\draw[fill=white] (-0.5,-0.5) rectangle (0.5,0.5);
			\node at (1.5,0) {$\mapsto$};
			\begin{scope}[xshift=2.5cm,yshift=-1cm]
				\draw[fill=white] (0,0) rectangle (1,1) rectangle (2,2) rectangle (3,1) rectangle (4,0);
				\draw[fill=black] (0,2) rectangle (1,1) rectangle (2,0) rectangle (3,1) rectangle (4,2);
			\end{scope}
		\end{scope}
		\begin{scope}[xshift=10cm]
			\draw[fill=black] (-0.5,-0.5) rectangle (0.5,0.5);
			\node at (1.5,0) {$\mapsto$};
			\begin{scope}[xshift=2.5cm,yshift=-1cm]
				\draw[fill=black] (0,0) rectangle (1,1) rectangle (2,2) rectangle (3,1) rectangle (4,0);
				\draw[fill=white] (0,2) rectangle (1,1) rectangle (2,0) rectangle (3,1) rectangle (4,2);
			\end{scope}
		\end{scope}
	\end{tikzpicture}
	\caption{A non-square substitution that generates the two-dimensional Thue-Morse hull.}
	\label{fig:alternate_thuemorse}
\end{figure}
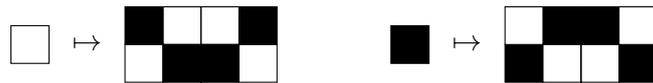

This could be taken as a suggestion that such symmetries cannot actually happen, imposing further limitations on the quotient $\mathcal{R}(\mathbb{X}_\varrho)/\mathcal{S}(\mathbb{X}_\varrho)$. Interestingly, this is not actually the case. For instance, consider the  two-dimensional rectangular substitution from Figure \ref{fig:alternate_thuemorse}. As the support for this substitution is a $4\times 2$ rectangle, we could guess that this substitution is incompatible with rotational symmetries or reflections along a diagonal axis, which would produce a $2\times 4$ rectangle instead. However, further examination shows that the hull generated by this substitution is actually the same as the hull of the two-dimensional Thue--Morse substitution as seen in e.g. \cite{B}, which is compatible with every symmetry from $W_2=D_4$. Thus, only geometrical considerations are not enough to exclude candidates for extended symmetries.

Fortunately, there is a subcase of particular interest in which this geometrical intuition is actually correct, which involves an arithmetic restriction on the side lengths of the support rectangle $R$. It turns out that coprimality of the side lengths is a sufficient condition (although it can be weakened even further) to rule out such symmetries, e.g.~there are no extended symmetries compatible with rotations when $R$ is a, say, $2\times 5$ rectangle. To be precise:

\begin{theorem}\label{thm: exclude HD symmetries}
   Let $\varrho\colon\mathcal{A}\to\mathcal{A}^R$ be a bijective rectangular substitution with faithful associated shift action. Suppose that $R=[\boldsymbol{0},\boldsymbol{L}-\boldsymbol{1}]$ with $\boldsymbol{L}=(L_1,\dotsc,L_d)$ (that is, $R$ is a $d$-dimensional rectangle with side lengths $L_1, L_2,\dotsc, L_d$) and that for some indices $i,j$ there is a prime $p$ such that $p\mid L_j$ but $p\nmid L_i$, i.e. $L_i$ and $L_j$ have different sets of prime factors. Let $A\in W_d\leqslant\mathrm{GL}(d,\mathbb{Z})$ and suppose that $A$ is the underlying matrix associated to an extended symmetry $f\in\mathcal{R}(\mathbb{X}_{\varrho})$. Then $A_{ij}=A_{ji}=0$.
\end{theorem}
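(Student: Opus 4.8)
The plan is to avoid arguing directly with the columns of $\varrho$ inside the rectangle $R$ and instead transfer the problem to the maximal equicontinuous factor of $\mathbb X_\varrho$, which is a genuine $\mathrm{GL}(d,\mathbb Z)$-conjugacy invariant. A direct supertile argument (comparing the column at a position $\boldsymbol p\in R$ with the column at $A\boldsymbol p$) would force $L_i=L_j$, which is too strong: as the two-dimensional Thue--Morse example in Figure~\ref{fig:alternate_thuemorse} shows, the shape of $R$ is \emph{not} an invariant of the hull, so an extended symmetry need not map level-$k$ supertiles to level-$k$ supertiles of the same geometry. What \emph{is} invariant is the $\boldsymbol L$-adic odometer structure, and the theorem should follow from the fact that a coordinate swap is incompatible with that structure when the side lengths have different prime supports.

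Concretely, I would first recall that the relevant equicontinuous factor of $\mathbb X_\varrho$ is the $\boldsymbol L$-adic odometer $\mathbb O=\prod_{\ell=1}^d\mathbb Z_{L_\ell}$, where $\mathbb Z_L=\varprojlim_k\mathbb Z/L^k\mathbb Z$, with $\mathbb Z^d$ acting by translation through the dense diagonal embedding $\iota\colon\mathbb Z^d\hookrightarrow\mathbb O$; for primitive aperiodic rectangular substitutions this factor is canonical and, up to a finite cyclic factor accounting for the height, is the full maximal equicontinuous factor (see \cite{Frank2,BG}, and compare \cite{BBHLN,B} for the behaviour under $\mathrm{GL}(d,\mathbb Z)$-conjugacy). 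Faithfulness of the shift action is what makes the matrix $A$ of an extended symmetry well defined. I would then pass $f$ to this factor: since $f\circ\sigma_{\boldsymbol m}\circ f^{-1}=\sigma_{A\boldsymbol m}$ conjugates the $\mathbb Z^d$-action to itself through the automorphism $\boldsymbol m\mapsto A\boldsymbol m$, it preserves the regionally proximal relation and hence descends to a self-homeomorphism $\bar f$ of $\mathbb O$ with $\bar f\circ T_{\iota(\boldsymbol m)}=T_{\iota(A\boldsymbol m)}\circ\bar f$ for all $\boldsymbol m$, where $T$ denotes translation. A homeomorphism of a compact abelian group normalising translations in this way is affine, so $\alpha(\boldsymbol z):=\bar f(\boldsymbol z)-\bar f(\boldsymbol 0)$ is a continuous group automorphism of $\mathbb O$ extending $A$, that is, $\alpha\circ\iota=\iota\circ A$.

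Writing $A\boldsymbol e_j=\pm\boldsymbol e_{\tau(j)}$ for the underlying permutation $\tau\in S_d$, I would then compute the effect of $\alpha$ on the coordinate subgroups. The $j$-th coordinate subgroup $C_j=\{\boldsymbol z\in\mathbb O:z_\ell=0 \text{ for } \ell\neq j\}\cong\mathbb Z_{L_j}$ is the closure of $\iota(\mathbb Z\boldsymbol e_j)$, so $\alpha(C_j)=\overline{\iota(A\mathbb Z\boldsymbol e_j)}=\overline{\iota(\mathbb Z\boldsymbol e_{\tau(j)})}=C_{\tau(j)}$; thus $\alpha$ restricts to a topological isomorphism $\mathbb Z_{L_j}\cong\mathbb Z_{L_{\tau(j)}}$. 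Since $\mathbb Z_L\cong\prod_{p\mid L}\mathbb Z_p$ depends only on the set of primes dividing $L$, this isomorphism forces $L_j$ and $L_{\tau(j)}$ to have the same set of prime divisors for every $j$. Consequently, if $A_{ij}\neq 0$ then $\tau(j)=i$ and $L_i,L_j$ share their prime divisors, contradicting $p\mid L_j$ and $p\nmid L_i$; hence $A_{ij}=0$, and applying the same reasoning to $\tau(i)=j$ gives $A_{ji}=0$.

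The hard part will be the descent step: making rigorous that $f$ induces an \emph{affine} homeomorphism of the odometer whose linear part is a continuous automorphism extending $A$. This rests on the functoriality of the equicontinuous factor under $\mathrm{GL}(d,\mathbb Z)$-twisted conjugacy and on the affineness of homeomorphisms that normalise an odometer action. Once these are in place the coordinate-subgroup computation and the comparison of prime supports are routine, and the assumption that $L_i$ and $L_j$ have different prime factors enters only at the very last step.
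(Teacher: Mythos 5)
Your proposal is correct, and its decisive first step coincides with the paper's: the paper likewise transfers the problem to the product odometer $\mathbb{Z}_{\boldsymbol{L}}=\mathbb{Z}_{L_1}\times\cdots\times\mathbb{Z}_{L_d}$ via the standard factor map $\varphi$ and invokes \cite[Thm.~5]{BRY} for precisely the affine representation $\varphi(f(x))=\boldsymbol{k}_f+\alpha_f(\varphi(x))$, with $\alpha_f$ the unique continuous extension of $\boldsymbol{n}\mapsto A\boldsymbol{n}$ --- so the step you flag as ``the hard part'' is, in the paper too, outsourced to that citation rather than proved from scratch. Where you genuinely diverge is the endgame. The paper argues dynamically: assuming $A\boldsymbol{e}_i=\pm\boldsymbol{e}_j$, it feeds the sequence $\boldsymbol{h}_m=L_i^m\boldsymbol{e}_i$ into the conjugation relation, extracts by compactness of $\mathbb{X}_\varrho$ a convergent subsequence $\sigma_{\boldsymbol{h}_{\beta(m)}}(x)\to x^*$ with $\varphi(x^*)=\varphi(x)$, and concludes that $L_i^{\beta(m)}\to 0$ in $\mathbb{Z}_{L_j}$, which is impossible because $L_j\nmid L_i^n$ for all $n$ keeps these points at $L_j$-adic distance $1$ from $0$. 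Your endgame is instead purely algebraic--topological: $\alpha_f$ is a continuous automorphism carrying the coordinate subgroup $C_j=\overline{\iota(\mathbb{Z}\boldsymbol{e}_j)}\cong\mathbb{Z}_{L_j}$ onto $C_{\tau(j)}$, and since $\mathbb{Z}_L\cong\prod_{p\mid L}\mathbb{Z}_p$ is determined by the prime support of $L$, the permutation $\tau$ underlying $A$ must preserve prime supports, contradicting $p\mid L_j$, $p\nmid L_i$ when $\tau(j)=i$. The two endgames are equivalent in substance --- your subgroup computation is a structural repackaging of the paper's non-convergence of $L_i^{\beta(m)}$ in $\mathbb{Z}_{L_j}$ --- but yours buys something: it dispenses with the choice of a point, subsequences, and compactness in $\mathbb{X}_\varrho$ (continuity of $\alpha_f$ already encodes all of this); it treats $A_{ij}$ and $A_{ji}$ symmetrically in one stroke, where the paper must rerun its argument for $f^{-1}$; and it isolates the sharper structural fact that any admissible $A$ must preserve the partition of coordinates by prime support, of which the stated theorem is the contrapositive. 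The paper's version, in exchange, is more self-contained at the level of the factor map, using only continuity of $\varphi$ and $f$ together with the cited affine representation, and never needing that $\alpha_f$ maps coordinate subgroups to coordinate subgroups. Your opening observation --- that a naive supertile-geometry argument would wrongly force $L_i=L_j$ and is refuted by the two-dimensional Thue--Morse example --- matches the paper's own motivation for routing the proof through the odometer, so no gap remains beyond the citation you already identified.
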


The underlying idea is that, if $A\in W_d$ induces a valid extended symmetry for some substitution $\varrho$ with support $U$, we can find another substitution $\eta$ with support $A\cdot U$ (up to an appropriate translation) such that $\mathbb{X}_\varrho = \mathbb{X}_\eta$, and then we use the known factor map from an aperiodic substitutive subshift onto an associated odometer to rule out certain matrices $A$. Similar exclusion results have been studied by Cortez and Durand \cite{CD}.

\begin{proof}
    Let $\varphi\colon\mathbb{X}_{\varrho}\twoheadrightarrow\mathbb{Z}_{L_1}\times\cdots\times\mathbb{Z}_{L_d}=\mathbb{Z}_{\boldsymbol{L}}$ be the standard factor map from the substitutive subshift to the corresponding product of odometers. It is known \cite[Thm.~5]{BRY} that, for any extended symmetry $f\colon\mathbb{X}_\varrho\to\mathbb{X}_\varrho$ with associated matrix $A$, there exists $\boldsymbol{k}_f = (k_1,\dotsc,k_d)\in\mathbb{Z}_{\boldsymbol{L}}$ and a group automorphism $\alpha_f\colon \mathbb{Z}_{\boldsymbol{L}}\to \mathbb{Z}_{\boldsymbol{L}}$ satisfying the following equation:
		\begin{equation}\label{eq:factor_symmetry_representation}
		    \varphi(f(x)) = \boldsymbol{k}_f + \alpha_f(\varphi(x)),
		\end{equation}
	where $\alpha_f$ is the unique extension of the map $\boldsymbol{n}\mapsto A\boldsymbol{n}$, defined in the dense subset $\mathbb{Z}^d$, to $\mathbb{Z}_{\boldsymbol{L}}$. In particular, for any $\boldsymbol{n}\in\mathbb{Z}^d$, if $f=\sigma_{\boldsymbol{n}}$ is a shift map, then $\boldsymbol{k}_{\sigma_{\boldsymbol{n}}}=\boldsymbol{n}$ and $\alpha_{\sigma_{\boldsymbol{n}}} = \mathrm{id}_{\mathbb{Z}_{\boldsymbol{L}}}$.
		
	Now, consider the sequence $\boldsymbol{h}_m = L_i^m\boldsymbol{e}_i$, and suppose $A_{ji} = \pm1$. Equivalently, $A\boldsymbol{e}_i = \pm \boldsymbol{e}_j$, since $A$ is a signed permutation matrix. Without loss of generality, we may assume the sign to be $+$. One has $L_i^m\xrightarrow{m\to\infty}0$ in the $L_i$-adic topology, and thus $\varphi(\sigma_{\boldsymbol{h}_m}(x)) = \boldsymbol{h}_m+\varphi(x) \xrightarrow{m\to\infty}\varphi(x)$, as it does so componentwise. By compactness, we may take a subsequence $\boldsymbol{h}_{\beta(m)}$ such that $\sigma_{\boldsymbol{h}_{\beta(m)}}(x)$ converges to some $x^*$; then, as the factor map $\varphi$ is continuous, we have $\varphi(x^*) = \varphi(x)$.

    Eq.~(\ref{eq:factor_symmetry_representation}) and this last equality imply that $\varphi(f(x))=\varphi(f(x^*))$ as well. Writing $x^*$ as a limit, we obtain from continuity that
        \begin{align*}
            \varphi(x^*) & = \lim_{m\to\infty}\varphi(f(\sigma_{\boldsymbol{h}_{\beta(m)}}(x)))= \lim_{m\to\infty}\varphi(\sigma_{A\boldsymbol{h}_{\beta(m)}}(f(x)))\\
            &= \varphi(x) + \lim_{m\to\infty}A\boldsymbol{h}_{\beta(m)}= \varphi(x) + \lim_{m\to\infty}L_i^{\beta(m)}A\boldsymbol{e}_{i} \\
            \implies \lim_{m\to\infty}L_i^{\beta(m)}\boldsymbol{e}_j &= \varphi(x^*) - \varphi(x)= \boldsymbol{0}.
        \end{align*}
    The last equality implies that, in the topology of $\mathbb{Z}_{L_j}$, the sequence $L_i^{\beta(m)}$ converges to $0$. However, since there is a prime $p$ that divides $L_j$ but not $L_i$, due to transitivity we must have $L_j\nmid L_i^n$ for all $n$, as otherwise $p\mid L_i^n$ and thus $p\mid L_i$. Thus, in base $L_j$, the last digit of $L_i^{\beta(m)}$ is never zero, and thus $L_i^{\beta(m)}$ remains at fixed distance $1$ from $\boldsymbol{0}$ (in the $L_j$-adic metric), contradicting this convergence. Thus, $A_{ji}$ cannot be $1$ and must necessarily equal $0$. For $A_{ij}$, the same reasoning applies to $f^{-1}$. Since $A$ is a signed permutation matrix, $A_{ij} = \pm 1$ would imply $(A^{-1})_{ji} = \pm 1$, again a contradiction.
\end{proof}

We now proceed to the generalisation of Theorem~\ref{thm: reversing symmetries iff} in higher dimensions. 
As before, for a block substitution $\varrho$, we have $R=\prod_{i=1}^{d}[0,L_{i}-1]$, with $L_i\geqslant 2$ and the expansive map $Q=\textnormal{diag}(L_1,L_2,\ldots,L_d)$. 
Let $A\in W_d\leqslant \text{GL}(d,\mathbb{Z})$ be a signed permutation matrix.  First, we assume that the location of a tile in any supertile is given by the location of its centre. 
Define the affine map $A^{(1)}\colon R\to R$ via $
A^{(1)}(\boldsymbol{i})=A(\boldsymbol{i}-\boldsymbol{x}_1)+|A|\boldsymbol{x}_1$
where $\boldsymbol{i}\in R$ and $\boldsymbol{x}_1=Q\boldsymbol{v}-\boldsymbol{v}$ with $\boldsymbol{v}=\frac{1}{2}(1,1,\ldots,1)^T$. Here, $(|A|)_{ij}=|A_{ij}|$.  The vector $|A|\boldsymbol{x}_1$ is the translation needed to shift the centre of the supertile to the origin, which we will need before applying the map $A$ and shifting it back again. 
We extend $A^{(1)}$ to any level-$k$ supertile by defining the map 
$A^{(k)}\colon R^{(k)}\to R^{(k)}$ given by 
\begin{equation}\label{eq: level-k g map} 
A^{(k)}(\boldsymbol{i})=A(\boldsymbol{i}-\boldsymbol{x}_k)+|A|\boldsymbol{x}_k,
\end{equation}
with $\boldsymbol{i}\in R^{(k)}$ and $\boldsymbol{x}_k=Q^{k}\boldsymbol{v}-\boldsymbol{v}$. Here  $ R^{(k)}:=\prod_{i=1}^{d}[0,L_i^k-1]$ is the set of locations of tiles in a level-$k$ supertile.

\begin{example}
Let $\varrho$ be a two-dimensional block substitution with $Q=\begin{psmallmatrix}
2 & 0\\
0 & 2
\end{psmallmatrix}$ and $A$ be the counterclockwise rotation by $90$ degrees, with corresponding matrix $A=\begin{psmallmatrix}
0 &-1\\
1& 0
\end{psmallmatrix}$. Consider the level-$3$ supertile and let $\boldsymbol{i}=(7,3)^{T}\in R^{(3)}$, with $Q$-adic expansion $\boldsymbol{i}\,\widehat{=}\,\boldsymbol{i}_2\boldsymbol{i}_1\boldsymbol{i}_0$. Here one has $\boldsymbol{i}_0=\boldsymbol{i}_1=\boldsymbol{e}_1+\boldsymbol{e}_2$ and $\boldsymbol{i}_2=\boldsymbol{e}_1$. One then gets $A^{(3)}(\boldsymbol{i})=(4,7)^{T}$; see Figure~\ref{fig: HD location set}. One can check that $\sum_{j=0}^{2} Q^{j}(A^{(1)}(\boldsymbol{i}_j))=A^{(3)}(\boldsymbol{i})$. \exend
\end{example}

\begin{figure}[h]
\begin{center}
\includegraphics[scale=1.1]{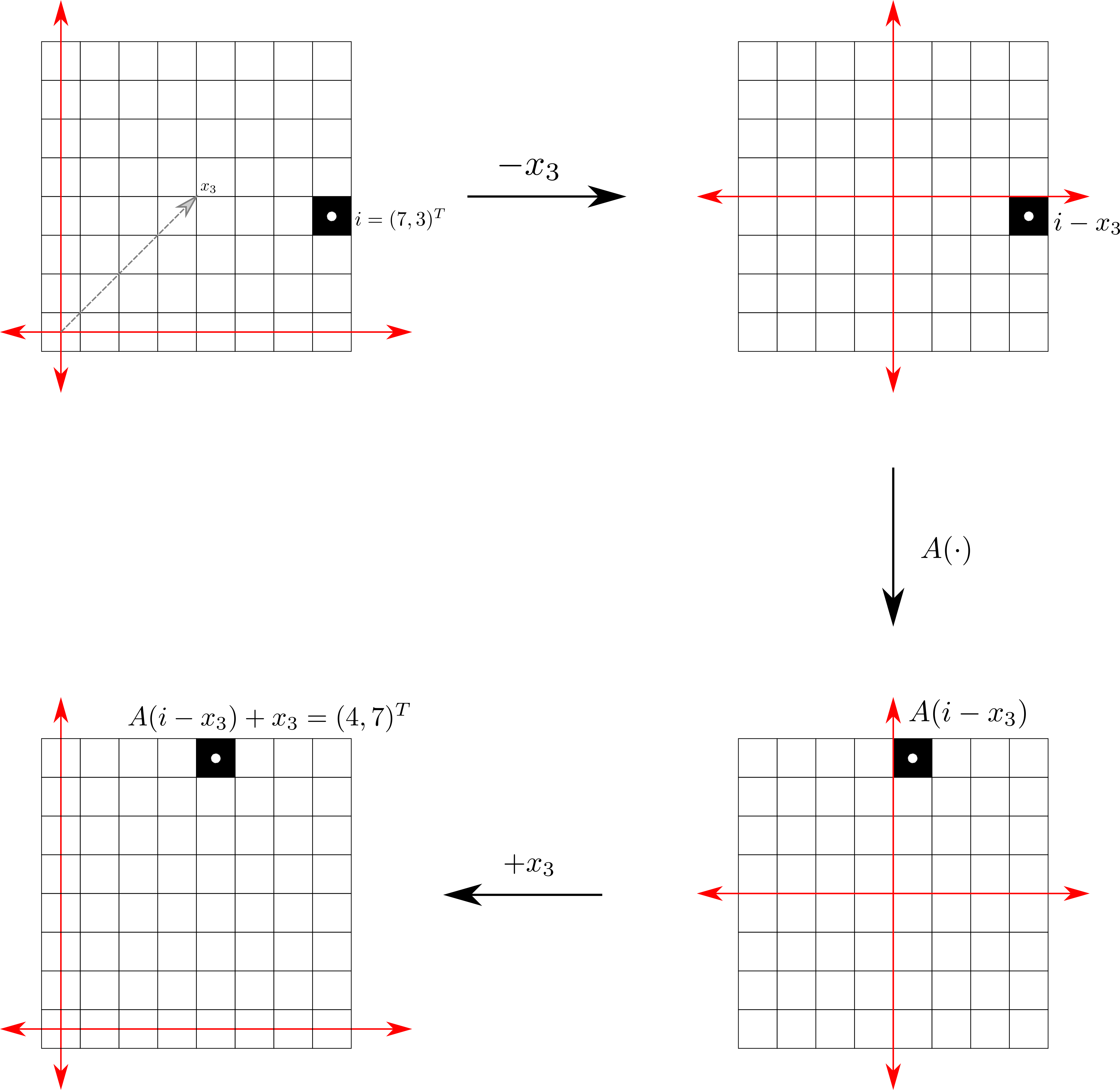}
\end{center}
\caption{The transformation of a marked level-3 location set $R^{(3)}$ under the map $A^{(3)}$.}\label{fig: HD location set}
\end{figure}

The following result is the analogue of Theorem~\ref{thm: reversing symmetries iff} in $\mathbb{Z}^{d}$.

\begin{theorem}\label{thm: extended symmetries iff}
Let $\varrho$ be an aperiodic, primitive, bijective block substitution $\varrho\colon \mathcal{A} \rightarrow \mathcal{A}^{R}$.
Let $W_d$ be the $d$-dimensional hyperoctahedral group and let $A\in W_d$. 
Suppose there exists $\boldsymbol{\ell}\in R$ such that $\varrho^{ }_{\boldsymbol{\ell}^{\prime}}=\textnormal{id}$ for all $\boldsymbol{\ell}^{\prime}\in \textnormal{Orb}_A(\boldsymbol{\ell})$.  
Assume further that $[A,Q]=0$ and $|A|\boldsymbol{x}_1=\boldsymbol{x}_1$.
Then $\pi$, together with $A$, gives rise to an extended symmetry  $f\in \mathcal{R}(\mathbb{X}_\varrho)$ if and only if
\begin{equation}\label{eq: HD sufficient}
\pi^{-1} \circ  \varrho^{ }_{\boldsymbol{i}} \circ \pi= \varrho^{ }_{A^{(1)}(\boldsymbol{i})}
\end{equation}
for all $\boldsymbol{i}\in R$. 
\end{theorem}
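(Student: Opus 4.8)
The plan is to mirror the structure of the proof of Theorem~\ref{thm: reversing symmetries iff}, with the one-dimensional position reflection $i\mapsto L-(i+1)$ replaced throughout by the geometric map $A^{(1)}$ acting on the level-$1$ location set $R$. The standing hypotheses $[A,Q]=0$ and $|A|\boldsymbol{x}_1=\boldsymbol{x}_1$ serve to guarantee that $A^{(1)}$ is a well-defined bijection of $R$ onto itself and that it is compatible with the $Q$-adic inflation structure, while the orbit hypothesis $\varrho^{ }_{\boldsymbol{\ell}'}=\mathrm{id}$ for $\boldsymbol{\ell}'\in\mathrm{Orb}_A(\boldsymbol{\ell})$ plays exactly the role of the opposing identity columns in the one-dimensional statement, namely pinning down the induced supertile permutation.

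For the forward direction (necessity of Eq.~\eqref{eq: HD sufficient}), I would invoke the higher-dimensional decomposition of extended symmetries recorded in Remark~\ref{rem:higher dimensional ext symm decomposition} (the analogue of Proposition~\ref{prop: reversor form}): up to a shift, $f$ is a composition of the letter exchange $\pi$ and the rearrangement $f_A(x)_{\boldsymbol{n}}=x_{A\boldsymbol{n}}$. Since $f(\mathbb{X}_\varrho)=\mathbb{X}_\varrho$ and every point decomposes into supertiles, $f$ must carry level-$1$ supertiles to level-$1$ supertiles. Applying $f$ to a single supertile $\varrho(a)$ and reading off the entry at position $\boldsymbol{i}$ therefore produces an induced permutation $\tau$ of supertile types with
\[
\pi\circ\varrho^{ }_{A^{(1)}(\boldsymbol{i})}=\varrho^{ }_{\boldsymbol{i}}\circ\tau\qquad(\boldsymbol{i}\in R).
\]
Evaluating this at $\boldsymbol{i}=\boldsymbol{\ell}$, where both $\varrho^{ }_{\boldsymbol{\ell}}=\mathrm{id}$ and $\varrho^{ }_{A^{(1)}(\boldsymbol{\ell})}=\mathrm{id}$ (as $\boldsymbol{\ell}$ and $A^{(1)}(\boldsymbol{\ell})$ both lie in $\mathrm{Orb}_A(\boldsymbol{\ell})$), forces $\tau=\pi$. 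Substituting back and rearranging yields precisely $\pi^{-1}\circ\varrho^{ }_{\boldsymbol{i}}\circ\pi=\varrho^{ }_{A^{(1)}(\boldsymbol{i})}$.

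For the converse (sufficiency), the central step is to promote the level-$1$ identity to all levels. I would first establish the digit-wise compatibility
\[
A^{(k)}(\boldsymbol{i})=\sum_{j=0}^{k-1}Q^{j}\,A^{(1)}(\boldsymbol{i}_j),
\]
where $\boldsymbol{i}=\sum_j Q^j\boldsymbol{i}_j$ is the $Q$-adic expansion. This is a direct computation using $[A,Q]=0$ to pull $A$ through the powers of $Q$, the telescoping identity $\big(\sum_{j<k}Q^j\big)(Q-\mathbb{I})=Q^k-\mathbb{I}$, and the observation that $|A|\boldsymbol{x}_1=\boldsymbol{x}_1$ propagates to $|A|\boldsymbol{x}_k=\boldsymbol{x}_k$, since $|A|$ can only permute coordinates sharing the same side length. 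Combining this with the $Q$-adic column factorisation $\varrho^{k}_{\boldsymbol{i}}=\varrho^{ }_{\boldsymbol{i}_0}\circ\cdots\circ\varrho^{ }_{\boldsymbol{i}_{k-1}}$ and inserting $\pi\pi^{-1}$ between consecutive factors gives
\[
\pi^{-1}\circ\varrho^{k}_{\boldsymbol{i}}\circ\pi
=\big(\pi^{-1}\varrho^{ }_{\boldsymbol{i}_0}\pi\big)\circ\cdots\circ\big(\pi^{-1}\varrho^{ }_{\boldsymbol{i}_{k-1}}\pi\big)
=\varrho^{ }_{A^{(1)}(\boldsymbol{i}_0)}\circ\cdots\circ\varrho^{ }_{A^{(1)}(\boldsymbol{i}_{k-1})}
=\varrho^{k}_{A^{(k)}(\boldsymbol{i})}
\]
for every $k$ and $\boldsymbol{i}\in R^{(k)}$. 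This says exactly that $\pi$ together with the geometric action $A^{(k)}$ carries each level-$k$ supertile $\varrho^k(a)$ onto $\varrho^k(\pi(a))$. By aperiodicity, every $x\in\mathbb{X}_\varrho$ has a unique supertile decomposition at each level, and the displayed identities are mutually consistent across levels by the same digit compatibility; hence they glue to a well-defined homeomorphism $f$ whose linear part is $A$, and by the decomposition result for extended symmetries we conclude $f\in\mathcal{R}(\mathbb{X}_\varrho)$.

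The step I expect to be the main obstacle is the geometric bookkeeping around $A^{(1)}$: confirming that the centring convention makes $A^{(1)}$ a genuine bijection of $R$, and establishing the digit-wise compatibility $A^{(k)}(\boldsymbol{i})=\sum_j Q^j A^{(1)}(\boldsymbol{i}_j)$ on which the entire inductive propagation rests. This is precisely where $[A,Q]=0$ and $|A|\boldsymbol{x}_1=\boldsymbol{x}_1$ are indispensable, and some care is needed to verify $|A|\boldsymbol{x}_k=\boldsymbol{x}_k$ for all $k$ rather than only $k=1$. A secondary, more technical point is the gluing argument in the converse: one must check that the level-wise maps are compatible so that they assemble into a single continuous self-map of $\mathbb{X}_\varrho$ that genuinely normalises the shift action, rather than merely permuting supertiles at each fixed scale.
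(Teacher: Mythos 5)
Your proposal is correct and follows essentially the same route as the paper: the necessity direction via the induced supertile permutation $\tau$ (the higher-dimensional analogue of Eq.~\eqref{eq: compare tau}, with the orbit hypothesis pinning $\tau=\pi$), and sufficiency via the digit-wise compatibility $A^{(k)}(\boldsymbol{i})=\sum_{j}Q^{j}A^{(1)}(\boldsymbol{i}_j)$ proved with $[A,Q]=0$, the telescoping sums, and $|A|\boldsymbol{x}_k=\boldsymbol{x}_k$, followed by inserting $\pi\pi^{-1}$ between the column factors. You actually spell out two steps the paper leaves terse (forcing $\tau=\pi$ on the identity orbit, and the gluing of level-wise supertile maps into a homeomorphism), but these are elaborations of the same argument rather than a different approach.
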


\begin{proof}

Most parts of the proof mimics those of the proof of Theorem~\ref{thm: reversing symmetries iff}, where one replaces the mirroring operation $m$
with a more general map $A\in W_d$. One then gets an analogous system of equations, as in those coming from Eq.~\eqref{eq: compare tau}. Using 
this, one can show the necessity direction. 

To prove sufficiency, we show that
if Eq.~\eqref{eq: HD sufficient} is satisfied for all $\boldsymbol{i}\in R$, then it also holds for all positions in any level-$k$ supertile. 
Let $\boldsymbol{i}\in R^{(k)}$, which admits the unique $Q$-adic expansion given by $\boldsymbol{i}\,\widehat{=}\,\boldsymbol{i}_{k-1}\boldsymbol{i}_{k-2}\cdots \boldsymbol{i}_1 \boldsymbol{i}_0$, i.e., $\boldsymbol{i}=\sum_{j=0}^{k-1}Q^{j}(\boldsymbol{i}_j)$. We now show that the $Q$-adic expansion of $A^{(k)}(\boldsymbol{i})$ is given by 
$A^{(k)}(\boldsymbol{i})\,\widehat{=}\, A^{(1)}(\boldsymbol{i}_{k-1})A^{(1)}(\boldsymbol{i}_{k-2})\cdots A^{(1)}(\boldsymbol{i}_{0})$. Plugging in the expansion of $\boldsymbol{i}$ into Eq.~\eqref{eq: level-k g map}, one gets $
A^{(k)}(\boldsymbol{i})=\left(\sum_{j=0}^{k-1} AQ^{j}(\boldsymbol{i}_j)\right)-A\boldsymbol{x}_k+\boldsymbol{x}_k$. 
On the other hand, one also has
\begin{align*}
\sum_{j=0}^{k-1}Q^{j}(A^{(1)}(\boldsymbol{i}_j))&=\sum_{j=0}^{k-1} Q^{j}\big(A(\boldsymbol{i}-Q\boldsymbol{v}+\boldsymbol{v})+Q\boldsymbol{v}-\boldsymbol{v}\big)\\
&= \sum_{j=0}^{k-1} Q^jA(\boldsymbol{i}_j)+\sum_{j=0}^{k-1}\left( -AQ^{j+1}\boldsymbol{v}+AQ^j\boldsymbol{v} \right)+ \sum_{j=0}^{k-1} \left(Q^{j}\boldsymbol{v}-Q^{j}\boldsymbol{v}\right)\\
&=\sum_{j=0}^{k-1} AQ^j(\boldsymbol{i}_j)\underbrace{-AQ^{k}\boldsymbol{v}+A\boldsymbol{v}}_{-A\boldsymbol{x}_k} + \underbrace{Q^{k}\boldsymbol{v}-\boldsymbol{v}}_{\boldsymbol{x}_k}\\
&=A^{(k)}(\boldsymbol{i}),
\end{align*}
where the penultimate equality follows from $[A,Q]=0$ and the evaluation of the two telescoping sums. As in Theorem~\ref{thm: reversing symmetries iff}, one then obtains 
\[
\pi^{-1}\circ \varrho^{ }_{\boldsymbol{i}}\circ
\pi=\pi^{-1}\circ \varrho^{ }_{\boldsymbol{i}_{k-1}}\circ\varrho^{ }_{\boldsymbol{i}_{k-2}}\circ\cdots \varrho^{ }_{\boldsymbol{i}_{0}}\circ\pi=\varrho^{ }_{A^{(k)}(\boldsymbol{i})},
\] 
whenever $\boldsymbol{i}\,\widehat{=}\,\boldsymbol{i}_{k-1}\boldsymbol{i}_{k-2}\cdots \boldsymbol{i}_0$ and $\pi^{-1}\circ \varrho^{ }_{\boldsymbol{i}_s}\circ \pi =\varrho^{ }_{A^{(1)}(\boldsymbol{i}_s)}$ for all $\boldsymbol{i}_s\in R$, which finishes the proof.
\end{proof}

\begin{remark}
The conditions $[A,Q]=0$ and $|A|\boldsymbol{x}_1=\boldsymbol{x}_1$ in Theorem~\ref{thm: extended symmetries iff} are automatically satisfied if $\varrho$ is a cubic substitution, i.e., $L_i=L$ for all $1\leqslant i\leqslant d$, which means 
one can use Eq.~\eqref{eq: HD sufficient} to check whether a given letter-exchange map works for any $A\in W_d$. 
For general $\varrho$, these relations are only satisfied for certain $A\in W_d$, e.g. reflections along coordinate axes, which means one needs a different tool to ascertain whether it is possible for other rigid motions to generate extended symmetries. For example, one can use Theorem~\ref{thm: exclude HD symmetries} to exclude some symmetries. \exend
\end{remark}

Before we proceed, we need a higher-dimensional generalisation of Proposition~\ref{prop: aperiodicity proximal pair} regarding aperiodicity. For this, we use the following result, which is formulated in terms of Delone sets. Here, $\mathbb{S}^{d-1}$ is the unit sphere in $\mathbb{R}^d$.

\begin{theorem}[{\cite[Thm.~5.1]{BG}}]\label{thm: aperiodicity Delone}
Let $\mathbb{X}(\varLambda)$ be the continuous hull of a repetitive Delone set $\varLambda\subset\mathbb{R}^d$. Let $\left\{\boldsymbol{b}_i\in \mathbb{S}^{d-1}\mid 1\leqslant i\leqslant d \right\}$ be a basis of $\mathbb{R}^d$ such that for each $i$, there are two distinct elements of $\mathbb{X}(\varLambda)$ which agree on the half-space $\left\{\boldsymbol{x}\mid\left\langle \boldsymbol{b}_i|\boldsymbol{x} \right\rangle >\alpha_i\right\}$ for some $\alpha_i\in\mathbb{R}^d$. Then one has that $\mathbb{X}(\varLambda)$ is aperiodic.  \qed
\end{theorem}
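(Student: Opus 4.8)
The plan is a proof by contradiction: assume $\mathbb{X}(\varLambda)$ is periodic and show that this forces one of the hypothesised half-space-agreeing pairs to coincide. Since $\varLambda$ is repetitive, the continuous hull $\mathbb{X}(\varLambda)$ is minimal under the translation action of $\mathbb{R}^d$. The first thing I would establish is that, in this minimal setting, periodicity propagates to the whole hull: the fixed-point set $\{w\in\mathbb{X}(\varLambda):\boldsymbol{t}+w=w\}$ of any vector $\boldsymbol{t}$ is closed and translation-invariant (translations commute), so if it is nonempty it must be all of $\mathbb{X}(\varLambda)$. Hence, if \emph{some} element admits a nonzero period $\boldsymbol{t}$, then $\boldsymbol{t}$ is a common period of \emph{every} element of the hull. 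I take the existence of such a $\boldsymbol{t}\neq\boldsymbol{0}$ as the working hypothesis to be refuted.

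The next step is a linear-algebraic observation that turns the basis condition into something usable. A nonzero vector cannot be orthogonal to a spanning set, so since $\{\boldsymbol{b}_i\}_{i=1}^d$ is a basis of $\mathbb{R}^d$, the candidate period $\boldsymbol{t}\neq\boldsymbol{0}$ satisfies $\langle\boldsymbol{b}_i\mid\boldsymbol{t}\rangle\neq 0$ for at least one index $i$. I fix this $i$ and let $x\neq y$ be the two distinct elements of the hull that, by hypothesis, agree on the half-space $H_i=\{\boldsymbol{z}:\langle\boldsymbol{b}_i\mid\boldsymbol{z}\rangle>\alpha_i\}$. Writing $\varLambda_x,\varLambda_y$ for the associated Delone sets, the agreement reads $\varLambda_x\cap H_i=\varLambda_y\cap H_i$, and by the previous paragraph both are $\boldsymbol{t}$-periodic, $\varLambda_x+\boldsymbol{t}=\varLambda_x$ and $\varLambda_y+\boldsymbol{t}=\varLambda_y$.

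The crux is then a transversality argument exploiting that $\boldsymbol{t}$ is \emph{not} parallel to the boundary of $H_i$. Since $\langle\boldsymbol{b}_i\mid\boldsymbol{t}\rangle\neq 0$, for every $\boldsymbol{p}\in\mathbb{R}^d$ there is an integer $n$ with $\langle\boldsymbol{b}_i\mid\boldsymbol{p}+n\boldsymbol{t}\rangle>\alpha_i$, that is, $\boldsymbol{p}+n\boldsymbol{t}\in H_i$. Sliding $\boldsymbol{p}$ into $H_i$ by periodicity and using the agreement there, I would compute
\begin{equation*}
\boldsymbol{p}\in\varLambda_x \iff \boldsymbol{p}+n\boldsymbol{t}\in\varLambda_x\cap H_i = \varLambda_y\cap H_i \iff \boldsymbol{p}+n\boldsymbol{t}\in\varLambda_y \iff \boldsymbol{p}\in\varLambda_y,
\end{equation*}
where the outer equivalences use $\varLambda_x+n\boldsymbol{t}=\varLambda_x$ and $\varLambda_y+n\boldsymbol{t}=\varLambda_y$. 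As $\boldsymbol{p}$ is arbitrary, this gives $\varLambda_x=\varLambda_y$, i.e.\ $x=y$, contradicting their distinctness. Therefore no nonzero common period exists, and $\mathbb{X}(\varLambda)$ is aperiodic.

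I expect the only genuinely delicate point to be the reduction in the first paragraph. The pairs $x,y$ are handed to us in advance with no control over their individual stabilisers, so the argument only works once we know that a period of the hull is a period of \emph{these particular} elements; this is exactly where minimality (hence repetitivity) is indispensable. After that bridge, the transversality computation is routine, and the basis hypothesis does precisely the job of guaranteeing that any potential period is ``seen'' by at least one of the $d$ half-space directions.
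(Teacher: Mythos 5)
Your proof is correct, and it is worth noting at the outset that the paper itself contains no proof of this statement: the theorem is imported verbatim from the cited source (Baake--Grimm, Thm.~5.1), and the only indication of method is the remark immediately after it, that the proof ``relies on \ldots proximality along $\boldsymbol{s}\in\mathbb{S}^{d-1}$.'' Your three steps are all sound: (a) repetitivity gives minimality of the hull, and the set of $\boldsymbol{t}$-periodic elements is closed (preimage of the diagonal under a continuous map into $\mathbb{X}\times\mathbb{X}$) and translation-invariant (translations commute), hence empty or everything; (b) a nonzero $\boldsymbol{t}$ cannot be orthogonal to a spanning set, so $\langle\boldsymbol{b}_i\mid\boldsymbol{t}\rangle\neq 0$ for some $i$; (c) the equivalence chain pushing an arbitrary $\boldsymbol{p}$ into the half-space by a multiple of $\boldsymbol{t}$ and using the agreement there checks out and yields $\varLambda_x=\varLambda_y$, the desired contradiction. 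In substance, your transversality computation is exactly the proximality mechanism of the cited proof stripped of its terminology: a pair agreeing on $\{\boldsymbol{x}:\langle\boldsymbol{b}_i\mid\boldsymbol{x}\rangle>\alpha_i\}$ is proximal along the direction $\boldsymbol{b}_i$, and a common period transverse to the bounding hyperplane propagates the half-space agreement to all of $\mathbb{R}^d$. What your version buys is a short, self-contained argument needing nothing beyond minimality; what the proximality formulation buys is a reusable notion that the paper exploits elsewhere (e.g.\ in Propositions~\ref{prop: aperiodicity proximal pair} and \ref{prop: aperiodicity HD proximal}, where asymptotic pairs along coordinate directions feed into exactly this criterion). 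You also correctly identified the one delicate reduction -- that periodicity of a single element propagates to the specific pair $x,y$ via minimality -- and you silently (and correctly) read the statement's $\alpha_i\in\mathbb{R}^d$ as the typo it is, treating $\alpha_i$ as a scalar threshold.
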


The proof of the previous theorem relies on the generalisation of the notion of proximality for tilings and Delone sets in $\mathbb{R}^{d}$, which is proximality along $\boldsymbol{s}\in\mathbb{S}^{d-1}$; see \cite[Sec.~5.5]{BG} for further details. 
Note that from a $\mathbb{Z}^d$-tiling generated by a rectangular substitution, one can derive a (coloured) Delone set $\varLambda$ by choosing a consistent control point for each cube (usually one of the corners or the centre). Primitivity guarantees that $\varLambda$ is repetitive and the notion of proximality extends trivially to coloured Delone sets using the same metric. The two hulls $\mathbb{X}(\varLambda)$ and $\mathbb{X}_{\varrho}$ are then mutually locally derivable, and the aperiodicity of one implies that of the other. We then have a sufficient criterion for the aperiodicity of $\mathbb{X}_{\varrho}$ in higher dimensions.

\begin{proposition}\label{prop: aperiodicity HD proximal}
Let $\varrho\colon \mathcal{A}\to \mathcal{A}^{R}$ be a $d$-dimensional rectangular substitution which is bijective and primitive. If there exist two legal blocks $u,v\in \mathcal{L}$ of side-length $2$ in each direction such that $u$ and $v$ disagrees at exactly one position and coincides at all other positions, then the hull  $\mathbb{X}_{\varrho}$ is aperiodic. 
\end{proposition}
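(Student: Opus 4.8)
The plan is to reduce the statement to Theorem~\ref{thm: aperiodicity Delone}. Via the mutually locally derivable coloured Delone set $\varLambda$ attached to $\mathbb{X}_\varrho$ (as described just before the statement), it suffices to produce a basis $\{\boldsymbol{b}_1,\dots,\boldsymbol{b}_d\}$ of $\mathbb{R}^d$ and, for each $i$, two distinct elements of the hull agreeing on a half-space with normal $\boldsymbol{b}_i$. I would in fact produce a \emph{single} pair $y\neq z\in\mathbb{X}_\varrho$ that works for all coordinate directions at once, taking $\boldsymbol{b}_i=\epsilon_i\boldsymbol{e}_i$ for signs $\epsilon_i\in\{\pm1\}$ dictated by the location of the disagreement. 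Since the raw blocks $u,v$ differ in a single cell, the idea is to inflate that disagreement by the substitution until the region of \emph{agreement} fills space, and then pass to a limit.

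First I would inflate. Write $u^{ }_{\boldsymbol{p}}=a\neq b=v^{ }_{\boldsymbol{p}}$ for the unique position $\boldsymbol{p}\in\{0,1\}^d$ where $u$ and $v$ differ, so they coincide at the other $2^d-1$ positions. Because $\varrho$ is bijective, every column of $\varrho^n$ is a bijection of $\mathcal{A}$, so the level-$n$ supertiles $\varrho^n(a)$ and $\varrho^n(b)$ differ at \emph{every} interior position, whereas the supertiles over the shared letters are literally equal. Hence $\varrho^n(u)$ and $\varrho^n(v)$, which are legal blocks of shape $2\boldsymbol{L}^n$, agree everywhere except on the single corner supertile at block-position $\boldsymbol{p}$, on which they disagree throughout.

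Next I would translate and take limits. Pick $Y_n\in\mathbb{X}_\varrho$ containing $\varrho^n(u)$ and $Z_n\in\mathbb{X}_\varrho$ containing $\varrho^n(v)$, translated by the common vector $\boldsymbol{t}_n$ with $(\boldsymbol{t}_n)_i=L_i^n$, so the enlarged blocks occupy $\prod_i[-L_i^n,L_i^n-1]$ and the differing corner occupies $\prod_i J_i^{(n)}$, where $J_i^{(n)}=[-L_i^n,-1]$ if $p_i=0$ and $J_i^{(n)}=[0,L_i^n-1]$ if $p_i=1$. By compactness I pass to a common subsequence with $Y_n\to y$ and $Z_n\to z$ in the closed set $\mathbb{X}_\varrho$. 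As $n\to\infty$ the block exhausts $\mathbb{Z}^d$ while the differing corner converges to the orthant $O=\prod_i K_i$, with $K_i=\{x_i<0\}$ if $p_i=0$ and $K_i=\{x_i\geqslant 0\}$ if $p_i=1$. Therefore $y$ and $z$ agree on $\mathbb{Z}^d\setminus O=\bigcup_i\{x:x_i\in K_i^{c}\}$, which contains the full half-space $\{x:x_i\in K_i^c\}$ for every $i$; this is exactly a half-space with normal $\epsilon_i\boldsymbol{e}_i$, where $\epsilon_i=+1$ if $p_i=0$ and $\epsilon_i=-1$ if $p_i=1$. For distinctness I would track the fixed point $\boldsymbol{x}^\ast$ whose $i$-th coordinate is $-1$ when $p_i=0$ and $0$ when $p_i=1$: it lies in the differing corner for every $n$, so $Y_n(\boldsymbol{x}^\ast)\neq Z_n(\boldsymbol{x}^\ast)$ by bijectivity of the relevant column, and these values are eventually constant along the subsequence, forcing $y(\boldsymbol{x}^\ast)\neq z(\boldsymbol{x}^\ast)$.

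With the basis $\{\epsilon_1\boldsymbol{e}_1,\dots,\epsilon_d\boldsymbol{e}_d\}$ and the single pair $(y,z)$, all hypotheses of Theorem~\ref{thm: aperiodicity Delone} are met (choosing each $\alpha_i=\pm\tfrac12$), giving aperiodicity of $\mathbb{X}(\varLambda)$ and hence of $\mathbb{X}_\varrho$. The step that I expect to need the most care is the limit bookkeeping: one must fix the common translation so that the inflated disagreement collapses onto exactly \emph{one} orthant, guaranteeing that its complement contains a genuine half-space in each coordinate direction, while simultaneously checking that the disagreement survives at $\boldsymbol{x}^\ast$ under the passage to the limit. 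The only other point to spell out carefully is the reduction to Theorem~\ref{thm: aperiodicity Delone} through mutual local derivability of $\mathbb{X}_\varrho$ and $\mathbb{X}(\varLambda)$, which is already outlined before the statement.
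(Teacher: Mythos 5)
Your proposal is correct, and it reaches the same endgame as the paper --- an application of Theorem~\ref{thm: aperiodicity Delone} with coordinate directions $\boldsymbol{b}_i=\pm\boldsymbol{e}_i$ --- but by a genuinely different mechanism for producing the pair of hull elements. The paper first replaces $\varrho$ by the power $\varrho^k$ with $k=\operatorname{lcm}\{|\varrho^{}_{\boldsymbol{r}}| : r_i\in\{0,L_i-1\}\}$, so that all $2^d$ corner columns become the identity; then $u$ and $v$, placed as seeds around the origin (occupying $\{-1,0\}^d$, in analogy with the one-dimensional seed $a|b$), generate genuine fixed points $x=\varrho^{\infty}(u)$ and $x'=\varrho^{\infty}(v)$ that coincide on every orthant except the one containing the disagreement --- an explicitly asymptotic pair, mirroring Proposition~\ref{prop: aperiodicity proximal pair}. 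You instead skip the lcm/identity-corner-column preparation entirely: you inflate once by an arbitrary power $n$, translate so the disagreeing corner supertile collapses onto a single orthant, and let compactness extract limits $y,z$. Your bookkeeping is sound: bijectivity of the columns of $\varrho^n$ makes the supertiles over $a\neq b$ disagree everywhere, so the agreement region exhausts the complement of one orthant; and the distinctness at $\boldsymbol{x}^\ast$ needs no further subsequence, since convergence in the product topology already forces $Y_n(\boldsymbol{x}^\ast)=y(\boldsymbol{x}^\ast)$ and $Z_n(\boldsymbol{x}^\ast)=z(\boldsymbol{x}^\ast)$ for large $n$, while $Y_n(\boldsymbol{x}^\ast)\neq Z_n(\boldsymbol{x}^\ast)$ for all $n$. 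What each approach buys: the paper's fixed-point construction is more explicit and exhibits the pair as asymptotic (hence proximal) along each $\boldsymbol{e}_i$, directly extending the one-dimensional picture and reusable elsewhere in the paper; your soft compactness argument is slightly more robust, as it never needs fixed points or nested supertile hierarchies to be arranged --- the limits $y,z$ need not be fixed points of any power, and Theorem~\ref{thm: aperiodicity Delone} does not care. The one point to keep explicit in a final write-up is the existence of $Y_n,Z_n$: the blocks $\varrho^n(u),\varrho^n(v)$ are legal because $u,v$ appear in some $\varrho^m(a)$, hence their images appear in $\varrho^{n+m}(a)$, and shift-invariance lets you position them as required.
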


\begin{proof}
The proof proceeds in analogy to Proposition~\ref{prop: aperiodicity proximal pair}. Here we choose the appropriate power to be 
 $k=\text{lcm}\left\{|\varrho^{ }_{\boldsymbol{r}}| \colon \boldsymbol{r}=\sum^{d}_{i=1} r_i\boldsymbol{e}_i, r_i\in \left\{0,L_i-1\right\} \right\}$. 
If we then place $u$ and $v$ at the origin, the resulting fixed points $x=\varrho^{\infty}(u)$ and $x^{\prime}=\varrho^{\infty}(v)$ which cover $\mathbb{Z}^{d}$ will coincide at every sector except at the one where $u_{\boldsymbol{j}}\neq v_{\boldsymbol{j}}$.
One can then choose $\boldsymbol{b}_i=\boldsymbol{e}_i$ and $\alpha_i=0$ in Theorem~\ref{thm: aperiodicity Delone}, and for each $i$, $x$ and $x^{\prime}$ to be the two elements which agree on a half-space, which guarantees the aperiodicity of $\mathbb{X}_{\varrho}$.
More concretely, $x$ and $x^{\prime}$ are asymptotic, and hence proximal, along $\boldsymbol{e}_i$ for all $1\leqslant i\leqslant d$. 
\end{proof}

\begin{remark}
Obviously, one can have a lattice of periods of rank less than $d$ in higher dimensions. An example would be when $\varrho=\varrho^{ }_1\times \varrho^{ }_2$, where $\varrho^{ }_1$ is the trivial substitution $a\mapsto aa,b\mapsto bb$ and $\varrho^{ }_2$ is Thue--Morse. 
Although $\varrho^{ }_1$ is itself not primitive, the product $\varrho$ is and admits the legal blocks given in Figure~\ref{fig: TM cross per}, which generate fixed points that are $\mathbb{Z}\boldsymbol{e}_1$-periodic. 
If one requires that the shift component in $\mathcal{S}(\mathbb{X}_{\varrho})$ is $\mathbb{Z}^d$, one needs all elements of $\mathbb{X}_{\varrho}$ to be aperiodic in all cardinal directions, hence the stronger criterion in Proposition~\ref{prop: aperiodicity HD proximal}. \exend
\end{remark}

\begin{figure}[h]
\begin{center}
\includegraphics[scale=1.0]{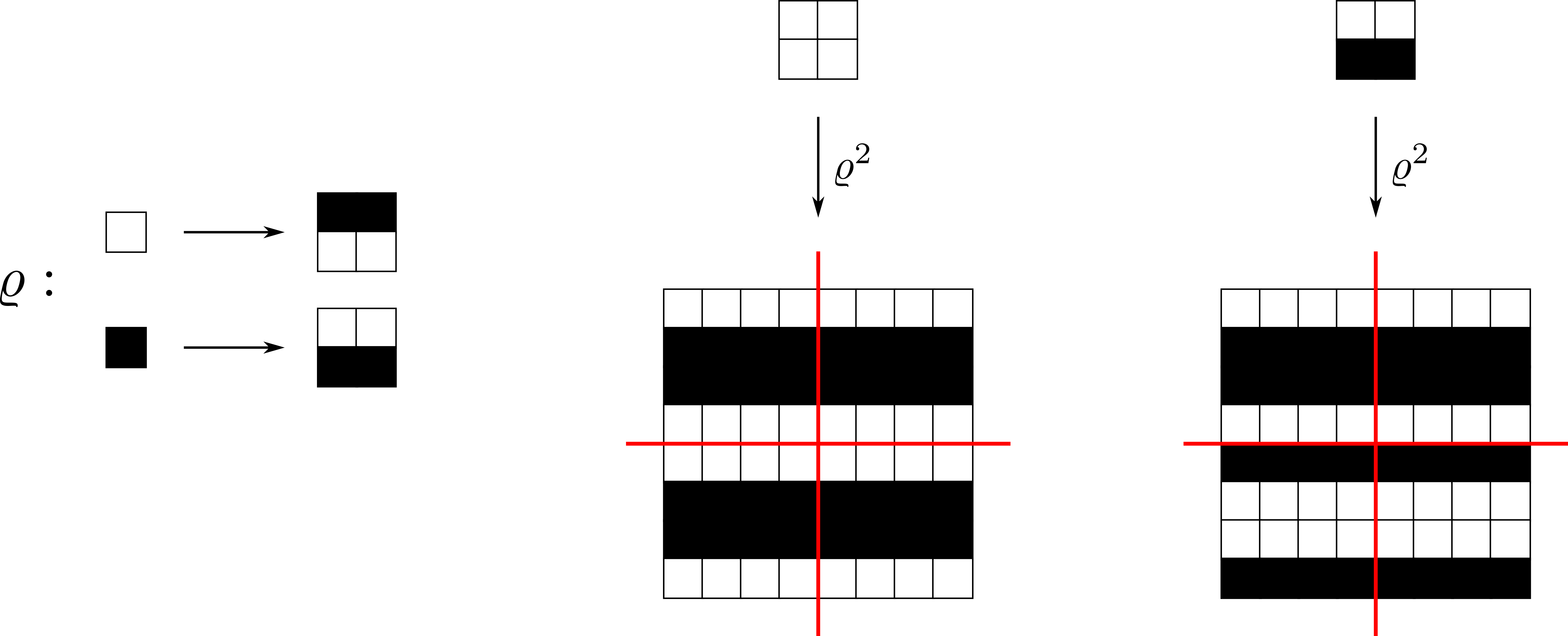}
\end{center}
\caption{The image of two distinct blocks under $\varrho$ coincide in the upper half-plane and are distinct in the lower half-plane. In the limit, these legal seeds  generate two fixed points which are neither left nor right asymptotic with respect to $\sigma_{\boldsymbol{e}_1}$. } 
\label{fig: TM cross per}
\end{figure}

The next result is the analogue of Theorem~\ref{thm: DDMP-result} for extended symmetries, which holds in any dimension.

\begin{theorem}\label{thm: G and P construction}
Given a finite group $G$ and a subgroup $P$ of the $d$-dimensional hyperoctahedral group $W_d$, there is an aperiodic, primitive, bijective $d$-dimensional substitution $\varrho$ whose shift space satisfies
\begin{align*}
\mathcal{S}(\mathbb{X}_\varrho) &\simeq \mathbb{Z}^d \times G    \\ 
\mathcal{R}(\mathbb{X}_\varrho) &\simeq (\mathbb{Z}^d \rtimes P) \times G. 
\end{align*}
\end{theorem}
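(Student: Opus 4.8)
The plan is to realise both groups at once with a single cubic substitution on the alphabet $\mathcal{A}=G$, letting the colour degrees of freedom encode $G$ and the geometric arrangement of the columns encode $P$. First I would fix an odd side length $L$ and take $R=[0,L-1]^d$ with expansion $Q=L\,\mathbb{I}$, which is convenient because then $[A,Q]=0$ and $|A|\boldsymbol{x}_1=\boldsymbol{x}_1$ hold for every $A\in W_d$, while the centre $\boldsymbol{c}=\tfrac{L-1}{2}(1,\dots,1)^{T}$ is an integer position fixed by every $A^{(1)}$. Each column would be the right multiplication $\varrho^{ }_{\boldsymbol{i}}\colon x\mapsto x\,g(\boldsymbol{i})$ determined by a labelling $g\colon R\to G$ chosen with $g(\boldsymbol{c})=e$ and with $g(R)$ generating $G$. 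Writing $G_{\mathrm r},G_{\ell}\leqslant S_{|G|}$ for the right and left regular representations, the column group is then $G_{\mathrm r}$, which is transitive, so $\varrho$ is primitive by Proposition~\ref{prop:group_primitivity}; since $\mathrm{cent}_{S_{|G|}}(G_{\mathrm r})=G_{\ell}\simeq G$, Theorem~\ref{thm:symm_group} yields $\mathcal{S}(\mathbb{X}_\varrho)\simeq\mathbb{Z}^d\times G$ once aperiodicity is established.

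Next I would convert the existence of extended symmetries into a condition on $g$. The centre $\boldsymbol{c}$ provides the identity column required by the hypotheses of Theorem~\ref{thm: extended symmetries iff}, so that theorem applies to every $A\in W_d$ and says that $A$ underlies an extended symmetry with letter exchange $\pi$ if and only if $\pi^{-1}\circ\varrho^{ }_{\boldsymbol{i}}\circ\pi=\varrho^{ }_{A^{(1)}(\boldsymbol{i})}$ for all $\boldsymbol{i}\in R$. Since $g(R)$ generates $G$, such a $\pi$ must normalise $G_{\mathrm r}$ and hence lie in the holomorph $G_{\ell}\rtimes\mathrm{Aut}(G)$; a direct computation gives $\pi^{-1}\circ(x\mapsto xa)\circ\pi=(x\mapsto x\,\alpha^{-1}(a))$ when the automorphism part of $\pi$ is $\alpha$, so the condition reduces to the purely combinatorial requirement
\[
    g\circ A^{(1)}=\alpha^{-1}\circ g\quad\text{for some }\alpha\in\mathrm{Aut}(G).
\]
The task therefore becomes constructing a labelling $g$ for which the set of $A\in W_d$ admitting such an $\alpha$ is exactly $P$, and for which, when $A\in P$, the witness can be taken to be $\alpha=\mathrm{id}$ (equivalently, $g$ is invariant under the $P$-action $\boldsymbol{i}\mapsto A^{(1)}(\boldsymbol{i})$). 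Because $g(R)$ generates $G$, an automorphism fixing $g(R)$ pointwise is the identity, so $\alpha=\mathrm{id}$ is then the unique witness for each $A\in P$, which is what forces the direct-product structure below.

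Producing this rigid labelling is the core difficulty. I would take $g$ constant on the orbits of the $P$-action on $R$ (making $P$-invariance automatic) and choose its values on orbit representatives. Two facts must be arranged for $L$ large. First, no $A\in W_d\setminus P$ may fix every $P$-orbit setwise: choosing an integer point $\boldsymbol{i}$ with $\boldsymbol{i}-\boldsymbol{c}$ off the finitely many proper fixed subspaces of the non-identity elements of $W_d$, any $A$ preserving the $P$-orbit of $\boldsymbol{i}$ must agree with some $B\in P$ at $\boldsymbol{i}$, whence $B^{-1}A$ fixes $\boldsymbol{i}$ and so $A=B\in P$; such points exist once $L$ is large. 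Consequently, for each $A\notin P$ there is a position whose image under $A^{(1)}$ lies in a different $P$-orbit, so $g\circ A^{(1)}=\alpha^{-1}\circ g$ becomes a genuine constraint relating values on distinct orbits. Second, over the finite set of $P$-invariant labellings, of cardinality $|G|^{t}$ with $t$ the number of $P$-orbits (and $t\to\infty$ as $L\to\infty$), each of the finitely many forbidden pairs $(A,\alpha)$ with $A\notin P$ rules out only a vanishing proportion, as does the non-generating locus. A counting argument then yields a labelling $g$ that generates $G$, satisfies $g(\boldsymbol{c})=e$, and violates every forbidden equation, so that the matrices of extended symmetries are precisely $P$, in agreement with Theorem~\ref{thm: hyperoctahedral HD}.

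Finally I would assemble the group. For each $A\in P$ the choice $\pi=\mathrm{id}$ solves the combinatorial condition, so the geometric map $f_A(x)_{\boldsymbol{n}}=x_{A^{-1}\boldsymbol{n}}$ is an extended symmetry, and the full solution set of $\pi$ for this $A$ is the coset $G_{\ell}\cdot\mathrm{id}=G_{\ell}$; hence every extended symmetry has the form $f_A\circ\sigma_{\boldsymbol{m}}\circ(\lambda_h)_\infty$ with $A\in P$, $\boldsymbol{m}\in\mathbb{Z}^d$ and $h\in G$. The relations $f_A\circ f_B=f_{AB}$ and $f_A\circ\sigma_{\boldsymbol{m}}\circ f_A^{-1}=\sigma_{A\boldsymbol{m}}$ show that the shifts together with the $f_A$ generate a copy of $\mathbb{Z}^d\rtimes P$; since each $f_A$ acts only on positions it commutes with the colour permutations $(\lambda_h)_\infty$ generating $G$, and this geometric subgroup meets $G$ trivially. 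Thus $\mathcal{R}(\mathbb{X}_\varrho)\simeq(\mathbb{Z}^d\rtimes P)\times G$. It remains to secure aperiodicity, needed both for Theorem~\ref{thm:symm_group} and to make the translation part exactly $\mathbb{Z}^d$: for $|G|\geqslant2$ a generic $g$ yields two legal $2\times\cdots\times2$ blocks differing at a single site, so Proposition~\ref{prop: aperiodicity HD proximal} applies. The degenerate case $G=\{e\}$ is treated by the same scheme after replacing the alphabet $G$ by a minimal rigidifying alphabet of at least three letters whose column group has trivial centraliser.
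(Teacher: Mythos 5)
Your proposal is sound in architecture and shares the paper's skeleton: a cubic support so that $[A,Q]=0$ and $|A|\boldsymbol{x}_1=\boldsymbol{x}_1$ hold and Theorem~\ref{thm: extended symmetries iff} applies to every $A\in W_d$ (after Theorem~\ref{thm: hyperoctahedral HD} restricts the matrices to $W_d$); an identity column on an $A$-invariant position (your centre $\boldsymbol{c}$, the paper's corners); free $W_d$-orbits to separate $P$ from $W_d\setminus P$; commutation of the positional maps $f_A$ with the letterwise maps to obtain the direct factor $G$; Proposition~\ref{prop: aperiodicity HD proximal} for aperiodicity; and a separate treatment of trivial $G$. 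Where you genuinely diverge is the mechanism killing $A\notin P$: the paper places one fixed non-identity column exactly on $P\cdot\boldsymbol{p}$ inside a free $W_d$-orbit and identity columns on the rest of that orbit, so that for $A\notin P$ Eq.~\eqref{eq: HD sufficient} would force a non-identity permutation to be conjugate to the identity, which is impossible for \emph{every} $\pi$ at once, with no case analysis over letter exchanges. You instead allow an arbitrary $P$-invariant labelling $g$ and must exclude every holomorph witness $(A,\alpha)$ with $\alpha\in\mathrm{Aut}(G)$ by counting; your normaliser reduction of $\pi$ to the holomorph and the identity $\pi^{-1}\circ R_a\circ\pi=R_{\alpha^{-1}(a)}$ are correct, and this is close in spirit to the construction of Remark~\ref{rem: alt construction}.

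Two concrete gaps remain. First, the counting as stated does not close: the single constraint you exhibit per forbidden pair $(A,\alpha)$ — one free point $\boldsymbol{i}$ with $A^{(1)}(\boldsymbol{i})\notin P\boldsymbol{i}$ — rules out proportion exactly $1/\lvert G\rvert$ of the $P$-invariant labellings, not a vanishing proportion, and the number of forbidden pairs can exceed $\lvert G\rvert$ (take $G=C_2$, $d=3$, $P$ trivial: up to $47$ pairs, each of proportion $1/2$), so the union bound fails. The fix is available but must be said: for $L$ large the free points split into many distinct $W_d$-orbits, $A^{(1)}$ preserves each $W_d$-orbit, so constraints arising from distinct free $W_d$-orbits involve disjoint sets of orbit-variables and are independent; with $s$ such orbits each pair is excluded outside a proportion at most $\lvert G\rvert^{-s}\to 0$. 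Second, aperiodicity is asserted (``a generic $g$ yields two legal $2\times\cdots\times 2$ blocks differing at a single site'') but not argued: you need two side-$2$ subcubes of $R$ whose $2^{d+1}$ points lie in pairwise distinct $P$-orbits — these exist for $L$ large — after which the required one-site discrepancy occurs with positive probability and is compatible with the other generic requirements, or you can impose it deterministically as the paper does with its extra layer of identity columns. A final sentence is also owed to the observation that $g(\boldsymbol{c})=e$ does double duty: beyond feeding Theorem~\ref{thm: extended symmetries iff}, the identity column gives $G^{(N)}=G^{(1)}=G_{\mathrm r}$, so that Theorem~\ref{thm:symm_group} really computes the letter exchanges as $\mathrm{cent}_{S_{|G|}}(G_{\mathrm r})=G_{\ell}\simeq G$. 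With these repairs your argument is correct; it buys flexibility (any sufficiently generic $P$-invariant labelling works, and the holomorph analysis makes visible why the paper's non-conjugacy trick suffices), at the price of a probabilistic existence argument where the paper is fully explicit.
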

\begin{proof}
We start by taking a cursory look at the proof of Theorem~3.6 in \cite{DDMP}. For a given finite group $G$, we choose a generating set $S = \{s_1,\dotsc,s_r\}$ that does not contain the identity, and build a substitution whose columns correspond to the left multiplication maps $L_{s_j}(g) = s_j\cdot g$, seen as permutations of the alphabet $\mathcal{A}=G$. These permutations generate the left Cayley embedding of $G$ in the symmetric group on $\lvert G\rvert$ elements, whose corresponding centraliser, which induces all of the letter exchanges in $\mathcal{S}(\mathbb{X}_\varrho)$, is the right Cayley embedding of $G$ generated by the maps $R_{s_j}(g) = g\cdot s_j$.

In what follows, we shall assume first that the group $G$ is non-trivial, as the case in which $G$ is trivial requires a slightly different construction. We also assume that the rectangular substitution we will construct engenders an aperiodic subshift, so that the group generated by the shifts is isomorphic to $\mathbb{Z}^d$. We delay the proof of this until later on, to avoid cluttering our construction with extraneous details.

Since $\mathcal{S}(\mathbb{X}_\varrho)$ depends only on the columns of the underlying substitution and not their relative position, we shall construct a $d$-dimensional rectangular substitution $\varrho$  with cubic support  whose columns correspond to copies of the aforementioned $L_{s_j}$, placed in adequate positions along the cube. We start with a cube $R=[0,2\lvert S\rvert + 2d + 1]^d$ of side length $2\lvert S\rvert + 2d + 2$, where the additional layer corresponding to the term $2$ will be used below to ensure aperiodicity. This cube is comprised of $N = \lvert S\rvert + d + 1$ ``shells'' or ``layers'', which are the boundaries of the inner cubes $[j,2\lvert S\rvert + 2d + 2 - j]^d$; we shall denote each of them by $\Lambda_j$, where $j$ can vary from $0$ to $N - 1$.

Fill the $i$-th inner shell $\Lambda_{N-i}$ with copies of the column $L_{s_i}$, for all $1\leqslant i\leqslant r$. This ensures that, as long as every other column is a copy of $L_{s_j}$ for some $j$ or an identity column, the symmetry group $\mathcal{S}(\mathbb{X}_\varrho)$ of the corresponding subshift will be isomorphic to $G$, because in our construction the $2^d$ corners of the point will always be identity columns. 

Now, note that $N$ is chosen large enough so that the point $\boldsymbol{p} = (0,1,\dotsc,d-1)$ lies in the outer $N-r\geqslant d$ shells and, moreover, the cube $[0,d-1]^d$ is contained in these outer shells as well. Thus, any permutation of the coordinates maps the cube $[0,d-1]^d$ to itself and, in particular, two different permutations map this point to two different points in this cube, that is, the orbit of $\boldsymbol{p}$ has $d!$ different points. Combining this with the fact that the mirroring maps send this cube to one of $2^d$ disjoint cubes (translations of $[0,d-1]^d$) in the corners of the larger cube $[0,N-1]^d$, it can be seen that $W_d$ acts freely on the orbit of the point $\boldsymbol{p}$, that is, there is a bijection between the hyperoctahedral group $W_d$ and the set $\operatorname{Orb}(\boldsymbol{p})$. 

Next, choose a fixed $s_j\in S$ that is not the identity element of $G$, so that $L_{s_j}$ is not an identity column. As $P$ is a subgroup of $W_d$, it is bijectively mapped to the set $P\cdot\boldsymbol{p} = \{g\cdot\boldsymbol{p}\::\:g\in P\}$. Place a copy of $L_{s_j}$ in each position from $P\cdot\boldsymbol{p}$, and an identity column in every other position from $\operatorname{Orb}(\boldsymbol{p})$. Fill every remaining position in the cube with identity columns. This ensures that the group of letter exchanges will remain isomorphic to $G$, and, for each matrix $A\in W_d$ associated with some element $g\in P$, the map $f_A$ given by the relation $f_A(x)_{\boldsymbol{n}} = x_{A\boldsymbol{n}}$ will be a valid extended symmetry, as a consequence of Theorem \ref{thm: extended symmetries iff}. 
 
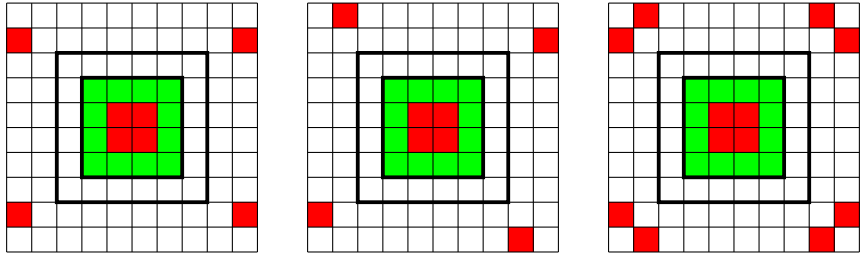
\begin{figure}[!ht]
    \centering
    \begin{tikzpicture}[scale=0.33]
        \fill[color=green] (3,3) rectangle (7,7);
        \fill[color=red] (4,4) rectangle (6,6);
        \fill[color=red] (0,1) rectangle +(1,1);
        \fill[color=red] (9,1) rectangle +(1,1);
        \fill[color=red] (0,8) rectangle +(1,1);
        \fill[color=red] (9,8) rectangle +(1,1);
        \foreach \i in {0,...,10} {
            \draw (\i,0) -- (\i,10);
            \draw (0,\i) -- (10,\i);
        }
        \draw[line width=0.5mm] (2,2) rectangle (8,8);
        \draw[line width=0.5mm] (3,3) rectangle (7,7);
        \begin{scope}[xshift=12cm]
            \fill[color=green] (3,3) rectangle (7,7);
            \fill[color=red] (4,4) rectangle (6,6);
            \fill[color=red] (0,1) rectangle +(1,1);
            \fill[color=red] (8,0) rectangle +(1,1);
            \fill[color=red] (1,9) rectangle +(1,1);
            \fill[color=red] (9,8) rectangle +(1,1);
            \foreach \i in {0,...,10} {
                \draw (\i,0) -- (\i,10);
                \draw (0,\i) -- (10,\i);
            }
            \draw[line width=0.5mm] (2,2) rectangle (8,8);
            \draw[line width=0.5mm] (3,3) rectangle (7,7);            
        \end{scope}
        \begin{scope}[xshift=24cm]
            \fill[color=green] (3,3) rectangle (7,7);
            \fill[color=red] (4,4) rectangle (6,6);
            \fill[color=red] (0,1) rectangle +(1,1);
            \fill[color=red] (9,1) rectangle +(1,1);
            \fill[color=red] (0,8) rectangle +(1,1);
            \fill[color=red] (9,8) rectangle +(1,1);
            \fill[color=red] (1,0) rectangle +(1,1);
            \fill[color=red] (1,9) rectangle +(1,1);
            \fill[color=red] (8,0) rectangle +(1,1);
            \fill[color=red] (8,9) rectangle +(1,1);
            \foreach \i in {0,...,10} {
                \draw (\i,0) -- (\i,10);
                \draw (0,\i) -- (10,\i);
            }
            \draw[line width=0.5mm] (2,2) rectangle (8,8);
            \draw[line width=0.5mm] (3,3) rectangle (7,7);
        \end{scope}

    \end{tikzpicture}
    \caption{Examples of substitutions obtained by the above construction, for the Klein $4$-group $C_2\times C_2$, the cyclic group $C_4$ and the whole $W_2=D_4$, respectively. The thicker lines mark the layer of identity columns separating the inner cube from the outer shell.}
    \label{fig:my_label}
\end{figure}

Since every other extended symmetry is a product of such an $f_A$ with some letter-exchange map that has to satisfy the conditions given by Eq.~\eqref{eq: HD sufficient} due to our construction, and $L_{s_j}$ cannot be conjugate to the identity column, the only other extended symmetries are compositions of the already extant $f_A$ with elements from $\mathcal{S}(\mathbb{X}_\varrho)$, i.e. $\mathcal{R}(\mathbb{X}_\varrho)/\mathcal{S}(\mathbb{X}_\varrho)$ has the equivalence classes of each $f_A$ as its only elements. 
As the set of all $f_A$ is an isomorphic copy of $P$ contained in $\mathcal{R}(\mathbb{X}_\varrho)$, we conclude that $\mathcal{R}(\mathbb{X}_\varrho)$ is isomorphic to the semi-direct product $\mathcal{S}(\mathbb{X}_\varrho)\rtimes P$. However, since every letter exchanges from $G$ commutes with every $f_A$ trivially, this semi-direct product may be written as $\mathcal{R}(\mathbb{X}_\varrho)\simeq (\mathbb{Z}^d\rtimes P)\times G$, as desired.

In the case where $G$ is trivial, we may choose an alphabet with at least three symbols (to ensure that $S_{\lvert\mathcal{A}\rvert}$ is non-Abelian) and repeat the construction above with a collection of columns $\varrho^{ }_0,\dotsc,\varrho^{ }_{r-1}$ that generates some subgroup of $S_{\lvert\mathcal{A}\rvert}$ with trivial centraliser (e.g. the two generators of $S_{\lvert\mathcal{A}\rvert}$ itself). The rest of the proof proceeds in the same way.

To properly conclude the proof, we need to verify that the constructed substitution generates an aperiodic shift space. We focus on the case $d>1$, as the one-dimensional case is a straightforward modification of the construction from Theorem~\ref{thm: DDMP-result}. Since our $d$-dimensional cube has at least $d+1\geqslant 3$ outer layers, we see that there is a $2\times\cdots\times 2$ cube $R_0$ contained in the outer layers that does not overlap any of the $2^d$ cubes of size $d\times\cdots\times d$ on the corners nor the inner cube of size $2\lvert S\rvert\times\cdots\times 2\lvert S\rvert$. As a consequence, this cube $R_0$ contains only identity columns. Since we have a layer $\Lambda_d$ consists only of identity columns directly enveloping the inner cube $\Lambda_{d+1}\cup\cdots\cup\Lambda_{d+\lvert S\rvert}$, the layer immediately following $\Lambda_{d}$ is comprised only of non-identity columns, which are copies of the same bijection $\pi\colon\mathcal{A}\to\mathcal{A}.$. Thus, the $2^d$ corners of the hollow cube $\Lambda_d\cup\Lambda_{d+1}$ are $2\times\cdots\times 2$ cubes $R_1,\cdots,R_{2^d}$ having exactly one non-identity column each, with this non-identity column $\tau$ being placed in every one of the $2^d$ possible positions on these cubes.

Since $\tau$ is not the identity, there must exist some $a\in\mathcal{A}$ such that $\tau(a)\ne a$. The previous discussion thus implies that there is an admissible pattern $P_a$ of size $2\times\cdots\times 2$ comprised only of copies of the symbol $a$, and $2^d+1$ other admissible patterns $P_a^{(\boldsymbol{n})}$ that differ from $P_a$ only in the position $\boldsymbol{n}\in[0,1]^d$. Using the proximality criterion from Proposition~\ref{prop: aperiodicity HD proximal}, we conclude that the subshift obtained is indeed aperiodic, as desired.
\end{proof}

\begin{remark}\label{rem: alt construction}
An alternative Cantor-type construction,  which produces the prescribed symmetry and extended symmetry groups, involves putting the non-trivial columns on the faces of $R$ and labelling all columns in the interior to be the identity. Let $G$ and $P$ be given. From Theorem~\ref{thm: DDMP-result}, there exists a substitution on $\mathcal{A}$  with $\mathcal{S}(\mathbb{X}_\varrho)=\mathbb{Z}\times G$. Let $\varrho^{ }_0,\ldots,\varrho^{ }_{r-1}$ be the non-trivial columns of $\varrho$. Pick $L$ to be large enough such that $W_d$ acts freely on the faces of $R=[0,L-1]^d$. Choose $\boldsymbol{j}_0\in R$ and consider the orbit of $\boldsymbol{j}_0$ under $P$, i.e., $\mathcal{O}_0:=P\cdot \boldsymbol{j}_0=\left\{A\cdot \boldsymbol{j}_0\mid A\in P\right\}$ where $A\cdot \boldsymbol{j}=A^{(1)}(\boldsymbol{j})$ as in Eq.~\eqref{eq: level-k g map} . Label all the columns in $\mathcal{O}_0$ with $\varrho^{ }_0$. We then expand $R$ via $Q=\text{diag}(L,\ldots,L)$ to get the $d$-dimensional cube $Q(R)$ of side length $L^2$. Consider $\mathcal{B}_1:=Q(\mathcal{O}_0)+R$, pick $\boldsymbol{j}_1\in \mathcal{B}_1$ and let $\mathcal{O}_1=P\cdot \boldsymbol{j}_1$. Relabel all columns in $\mathcal{B}_1\setminus \mathcal{O}_1$ with $\varrho^{ }_0$ and all columns in $\mathcal{O}_2$ with $\varrho^{ }_1$. One can continue this process until 
all needed column labels appear; see Figure~\ref{fig: 2D chosen groups} for a two-dimensional example. 

\begin{figure}[h!]
\subfloat[$\mathcal{O}_0$ in blue]{
\includegraphics[scale=1.2]{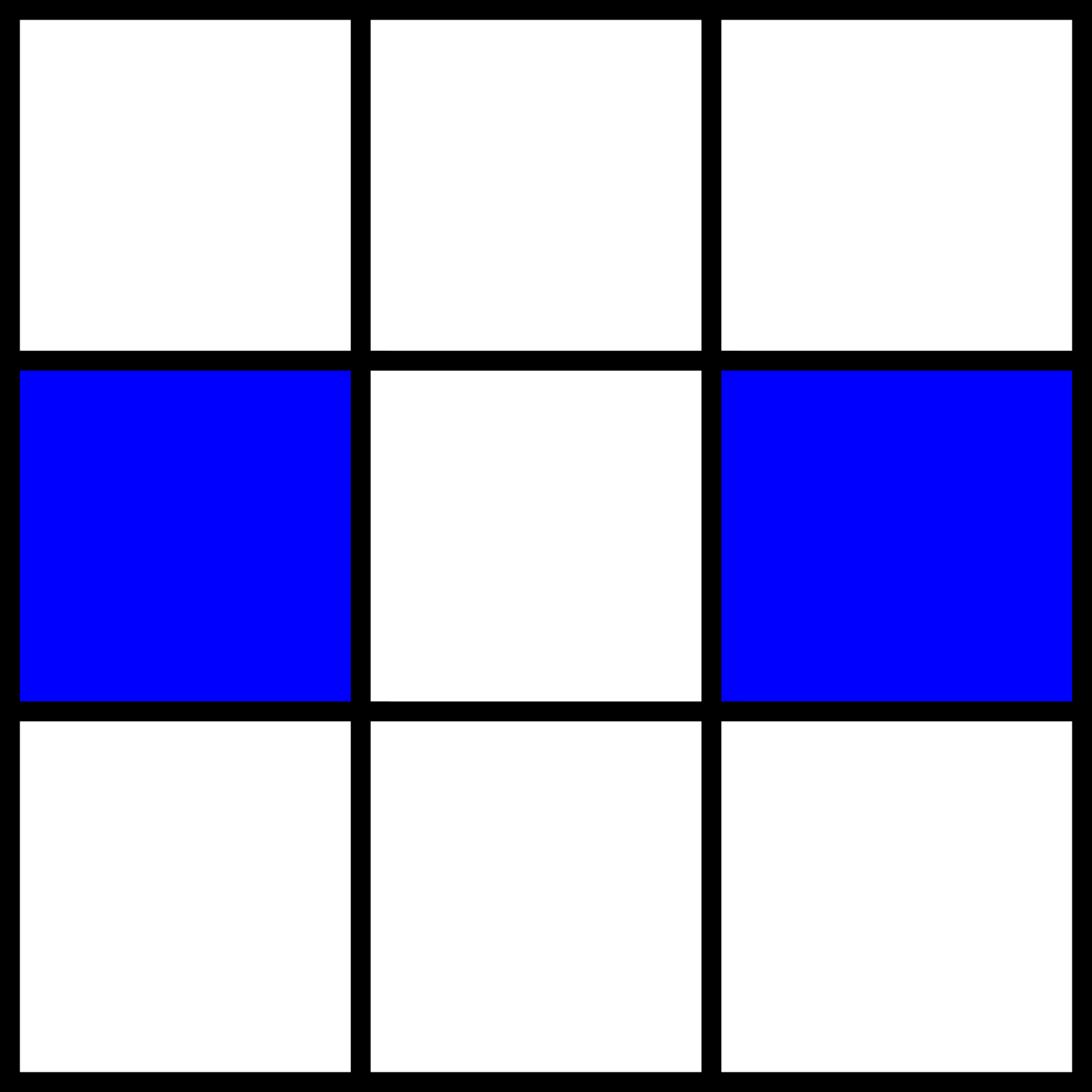}
} \quad
\subfloat[$\mathcal{B}_1\setminus\mathcal{O}_1$ in blue, $\mathcal{O}_1 $ in red ]{
\includegraphics[scale=1.2]{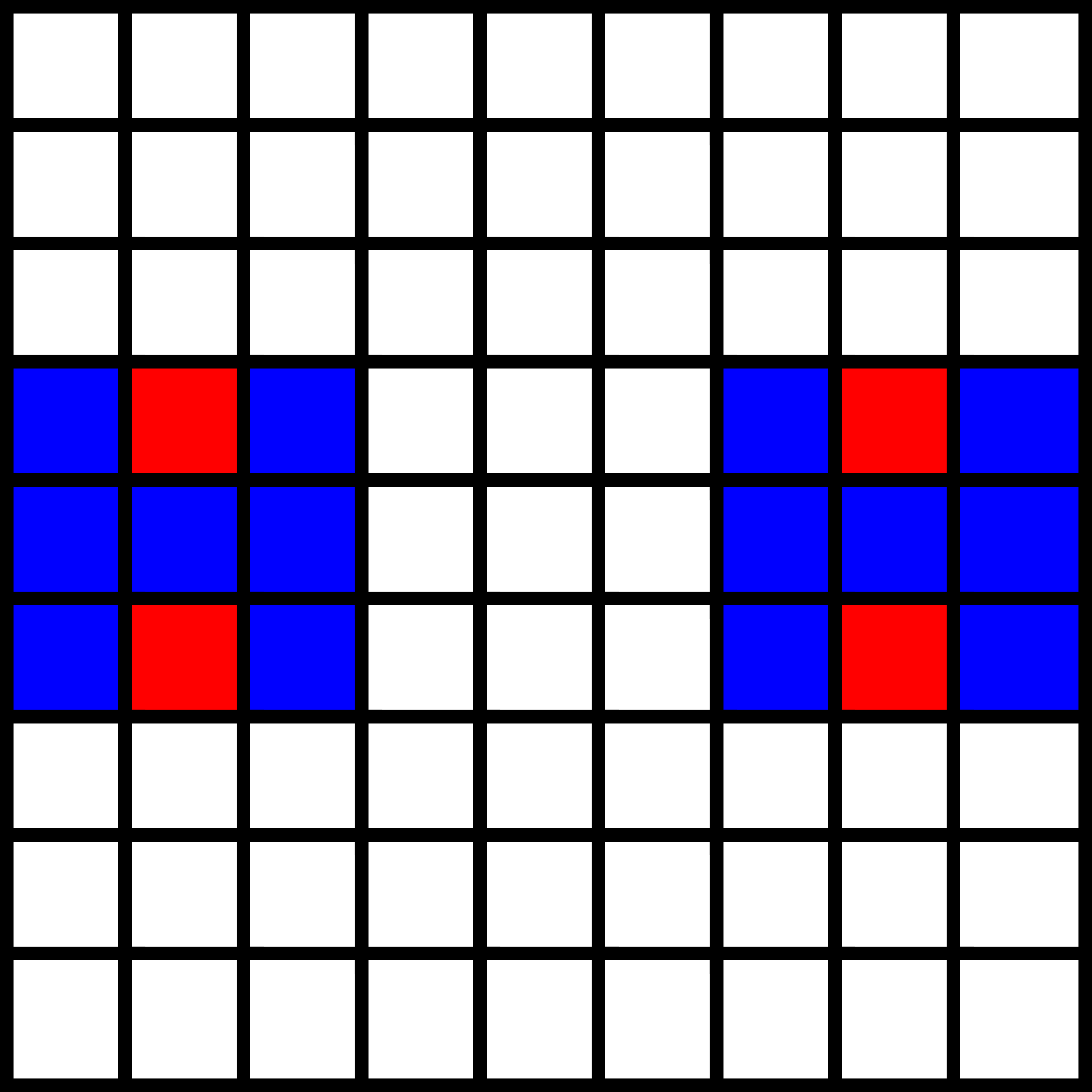}
} \quad
\subfloat[$\mathcal{B}_2\setminus \mathcal{O}_2$ in green]{
\includegraphics[scale=1.2]{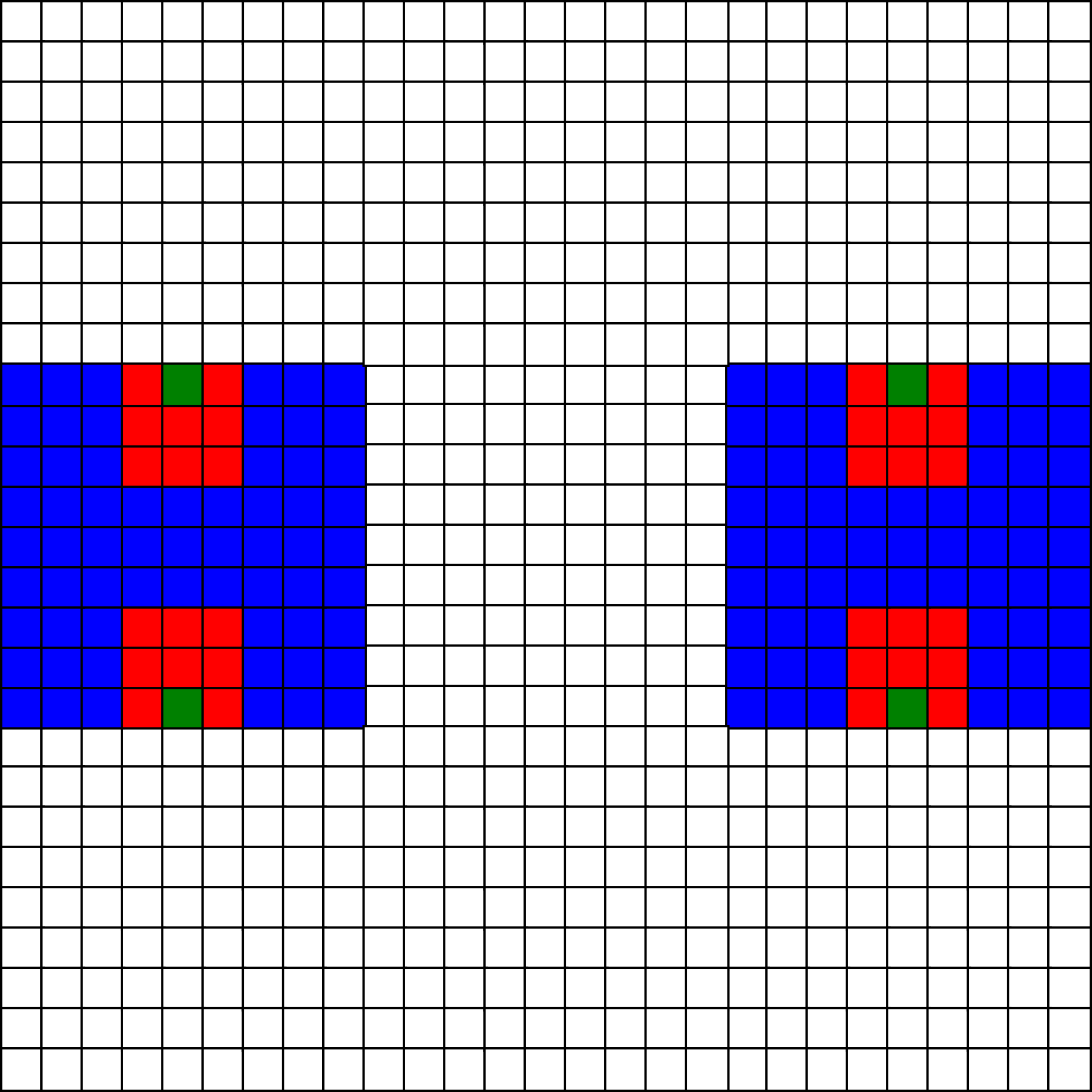}
}
\caption{An example in $\mathbb{Z}^2$ with three non-trivial columns $\varrho^{ }_0$ (blue), $\varrho^{ }_1$ (red) and $\varrho^{ }_2$ (green). Here, one has $G=\text{cent}_{S_{|\mathcal{A}|}}\left\langle\varrho^{ }_0,\varrho^{ }_1,\varrho^{ }_2\right\rangle$ and $P\simeq V_4$, where $V_4\leqslant D_4=W_2$ is the Klein-$4$ group.}\label{fig: 2D chosen groups}
\end{figure}

Note that one has 
$\varrho^{ }_{\boldsymbol{i}}=\varrho^{ }_{A^{(1)}(\boldsymbol{i})}$ for all $A\in P$ and $\boldsymbol{i}\in R=[0,L-1]^d$ by construction, which means $\pi=\text{id}$ gives rise to an element of $\mathcal{R}(\mathbb{X}_\varrho)$ for all $A\in P$ by Theorem~\ref{thm: extended symmetries iff}. No other extended symmetries can occur because all the location sets $\mathcal{B}_i$ only contain non-trivial labels and are $P$-invariant, whereas if $A\notin P$ induces an extended symmetry, one must have $\varrho^{ }_{\boldsymbol{\ell}}=\text{id}$ for some $\boldsymbol{\ell}\in \mathcal{B}_r$.

The resulting block substitution is primitive, since reordering the columns does not affect primitivity. It is also aperiodic because one has enough identity columns, and hence one can find the legal words required in Proposition~\ref{prop: aperiodicity HD proximal}. For example, in the constructed substitution in Figure~\ref{fig: 2D chosen groups}, the legal seeds 
can be derived from the $2\times2$ block consisting of all identity columns (i.e. all white squares), and another one with all columns being the identity except at exactly one corner, where it is blue. This completes the picture and one has $\mathcal{S}(\mathbb{X}_\varrho)\simeq\mathbb{Z}^d\times G$ and $\mathcal{R}(\mathbb{X}_\varrho)\simeq(\mathbb{Z}^d\rtimes P)\times G$. \exend
\end{remark}

We now turn our attention to examples where the letter-exchange map $\pi$ that generates $f\in \mathcal{R}(\mathbb{X}_{\varrho})$ is not given by the identity.
In particular, in these examples, $\pi$ does not commute with the letter-exchanges which correspond to the standard symmetries in $\mathcal{S}(\mathbb{X}_{\varrho})$.
To avoid confusion, we will use letters for our substitution and the action of the hyperoctahedral group will be given by numbers, seen as permutations of the coordinates. Mirroring along a hyperplane will be denoted by $m_i$, where $i$ is the respective coordinate. 

\begin{example}\label{ex: three dimension}
We explicitly give a substitution whose symmetry group is $\mathcal{S}(\mathbb{X}_{\varepsilon})=\mathbb{Z}^d\times C_3$ and build another $C_3$ component in $\mathcal{R}(\mathbb{X}_{\varepsilon})$, which produces reversors of order 9. With the requirement on $\mathcal{R}(\mathbb{X}_{\varepsilon})/\mathcal{S}(\mathbb{X}_{\varepsilon})$, the space has to be at least of dimension 3. 
\begin{align*}
    \varepsilon^{ }_0&=(a\,d\,g)(b\,e\,h)(c\,f\,i)  & \varepsilon^{ }_2&=(a\,b\,c)(d\,e\,f)(g\,h\,i)  &\varepsilon^{ }_5=\textnormal{id} \\
    \varepsilon^{ }_1&=(a\,g\,d)(b\,h\,e)(c\,i\,f) &\varepsilon^{ }_3&=(b\,c\,d)(e\,f\,g)(h\,i\,a)    \\
    & &\varepsilon^{ }_4&=(c\,d\,e)(f\,g\,h)(i\,a\,b) 
\end{align*}
Here one has $\mathcal{S}(\mathbb{X}_\varepsilon)= \mathbb{Z}^3 \times C_3 $, which is generated by $(a\,d\,g)(b\,e\,h)(c\,f\,i)$. Depending on the positioning of the columns, $\mathcal{R}(\mathbb{X}_\varepsilon)$ can either be $\mathbb{Z}^3 \rtimes C_9$, $\mathbb{Z}^3 \rtimes C_3 \times C_3$ or $ \mathbb{Z}^3 \times C_3 $. The group $\mathbb{Z}^3 \rtimes C_3 \times C_3$ can be realised using the construction from Theorem~\ref{thm: G and P construction}. On the other hand, $\mathbb{Z}^3 \times C_3 $ is obtained if one orbit of maximal size is labelled with just one non-identity $\varepsilon^{ }_i$ once, and the rest with $\varepsilon^{ }_0$. 

Note that $\pi=(a\,b\,c\,d\,e\,f\,g\,h\,i)$ sends $\varepsilon^{ }_2 \rightarrow \varepsilon^{ }_3 \rightarrow \varepsilon^{ }_4 \rightarrow \varepsilon^{ }_2 $ and $\varepsilon^{ }_0 \rightarrow \varepsilon^{ }_0, \; \varepsilon^{ }_1 \rightarrow \varepsilon^{ }_1$.
Taking the cube of $(a\,b\,c\,d\,e\,f\,g\,h\,i)$ gives $(a\,d\,g)(b\,e\,h)(c\,f\,i)\in \text{cent}_{S_9}(G^{(1)})$, where $G^{(1)}$ is the group generated by the columns. This is consistent with the bounds calculated in Proposition~\ref{prop:Rorder}. We will illustrate the positioning of a few elements following the construction in Theorem~\ref{thm: G and P construction}. We look at a position that has the maximum orbit size under $W_3$, for example $(0,1,2)\in R$. The orbit under $C_3$ is $(0,1,2),(1,2,0),(2,0,1)$, which is obtained by cyclically permuting the coordinates. We place $\varepsilon^{ }_2$ at position (0,1,2), $\varepsilon^{ }_3$ at position (1,2,0) and $\varepsilon^{ }_4$ at position (2,0,1).  Since $\varepsilon^{ }_0,\varepsilon^{ }_1\in \text{cent}_{S_9}(G)$, we will position them each along a different orbit. All remaining positions will be filled with the identity to ensure that we cannot have additional symmetries. We use Proposition~ \ref{prop: aperiodicity HD proximal} to ensure aperiodicity. It is easy to see that one gets the required patches by choosing the $2\times2\times 2$ cube in the upper right corner from the first and second slices and the other one from the second and third. For this configuration, one has $\mathcal{R}(\mathbb{X}_{\varepsilon})=\mathbb{Z}^3\rtimes C_9$. \exend

\end{example}
\begin{figure}[H]
\includegraphics[scale=0.15]{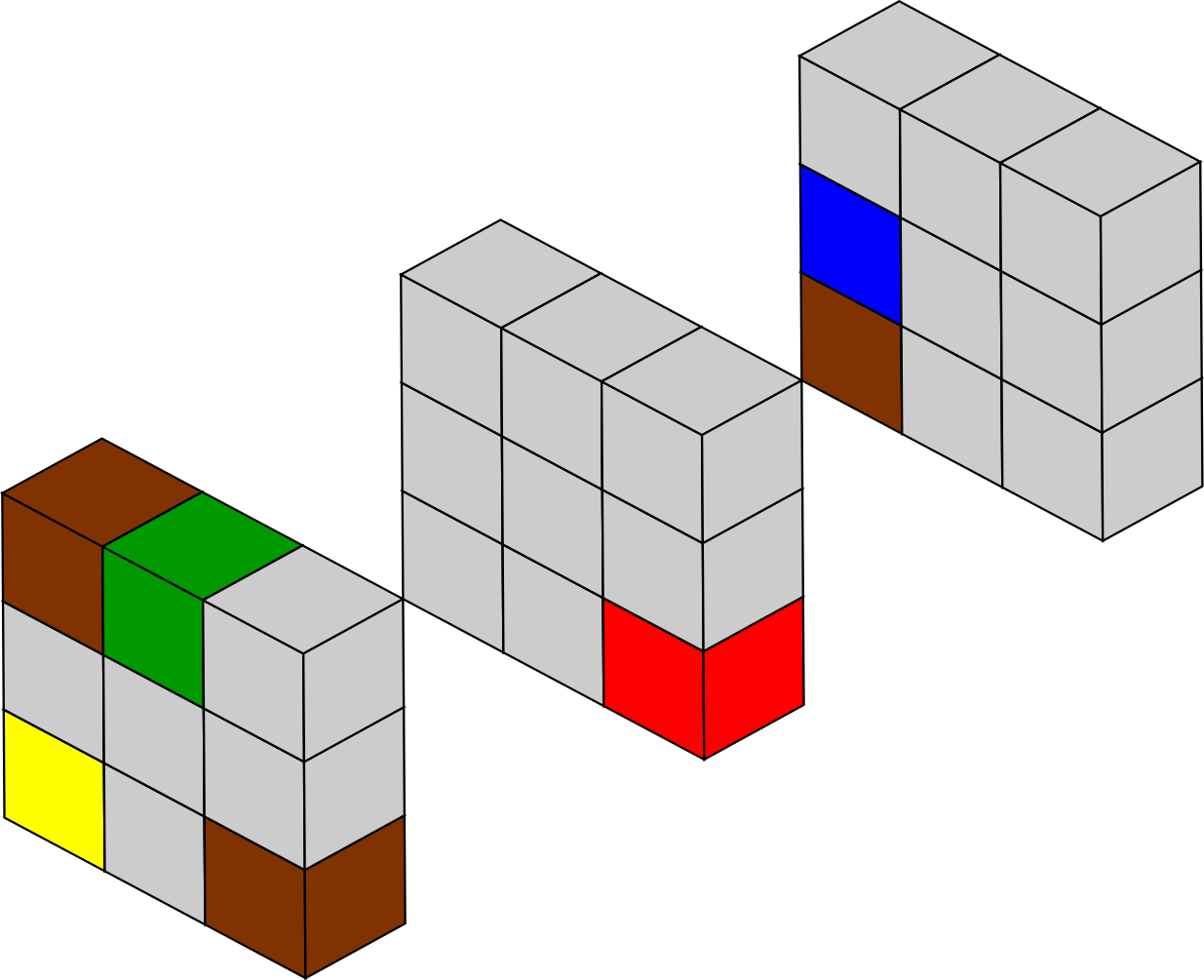}
\caption{The gray cubes are filled with $\varepsilon^{ }_5$ (the identity). Yellow and brown can be filled by either $\varepsilon^{ }_0, \varepsilon^{ }_1$, respectively. Lastly, $\varepsilon^{ }_2$ is blue, $\varepsilon^{ }_3$ is green and $\varepsilon^{ }_4$ is red, where one has the obvious freedom in choosing the colours due to the $C_3$-symmetry.}
\label{fig:positioning_col_1}
\end{figure}

\begin{remark} As a generalisation of Example~\ref{ex: three dimension}, for any given cyclic groups $C_n$ and $C_k$, we can construct a substitution $\varrho$ in $\mathbb{Z}^{n}$, such that $\mathbb{X}_\varrho$ has the symmetry group $\mathbb{Z}^{n} \times C_k$ and its extended symmetry group is given by $(\mathbb{Z}^{n} \times C_k) \rtimes C_n$. More precisely, since the extended symmetry group contains an element of order $nk$, $\mathcal{R}(\mathbb{X}_{\varrho})=\mathbb{Z}^{n}\rtimes C_{nk}$. 
The substitution can be realised by the following columns 
\begin{align*}
\varepsilon^{ }_0&=(a_1 \, a_{k+1} \, \cdots \, a_{(n-1)k+1}) \cdots (a_k \,  \cdots \, a_{nk}) \\
\varepsilon^{ }_i&=(a_i \, a_{i+1}\, \cdots \, a_{k-1+i}) (a_{k+i} \, a_{k+i+1} \,  \cdots \, a_{2k+i-1}) \cdots (a_{(n-1)k+i} \, a_{(n-1)k+i+1} \, \cdots \, a_{nk-1+i})  \\
\varepsilon^{ }_{n+1}&= \text{id}
\end{align*}
where $i$ runs from 1 to $n$, where the values are seen modulo $nk$.

From the columns $\varepsilon^{ }_i$ with $i\neq 0$ we can see that the centraliser can only be the permutation of the cycles limiting the centraliser to $S_k$, while $\varepsilon_0$ limits it further to be $C_k$, since this copy of $S_k$ operates on the cycles independently and the centraliser of a cycle is just the cycle itself. The extended symmetry is realised by the permutation $(a_1 \cdots a_{kn})$ which maps $\varepsilon^{ }_i$ to $\varepsilon^{ }_{i+1}$. Its orbit is determined by the action of $C_n \leqslant W_n$ on the positioning of the columns.  \exend
\end{remark}

In the next example we illustrate how important it is to choose compatible structures for the letter-exchange map and the corresponding action in $W_d$.

\begin{example}\label{ex: S4 example}
 We look at a four-letter alphabet with the following columns in Eq.~\eqref{eq: columns S4} which generate $S_4$ as a subgroup of $S_4$, thus implying that the shift space to have a trivial centraliser. We plan to have $S_3\simeq \mathcal{R}(\mathbb{X}_{\varepsilon})/\mathcal{S}(\mathbb{X}_{\varepsilon})$, so we place the columns in a three-dimensional cube.

\begin{align}\label{eq: columns S4}
    \varepsilon^{ }_0&= \textnormal{id}      &\varepsilon^{ }_1&=(a\,b\,c\,d)   &\varepsilon^{ }_4&=(a\,c\,d\,b) \nonumber\\
                        & &\varepsilon^{ }_2&=(a\,b\,d\,c)  &\varepsilon^{ }_5&=(a\,d\,b\,c)\\
                        & &\varepsilon^{ }_3&=(a\,c\,b\,d)   &\varepsilon^{ }_6&=(a\,d\,c\,b)  \nonumber
\end{align}
The symmetry group is trivial since the columns generate $S_4$. Conjugation with $\tau=(c\,d)$ maps $\varepsilon^{ }_1$ to $\varepsilon^{ }_2$, just as any $\tau \kappa$, with $\kappa\in  \text{cent}_{S_4}(\varepsilon^{ }_2)$.

\begin{figure}[H]
\includegraphics[scale=0.15]{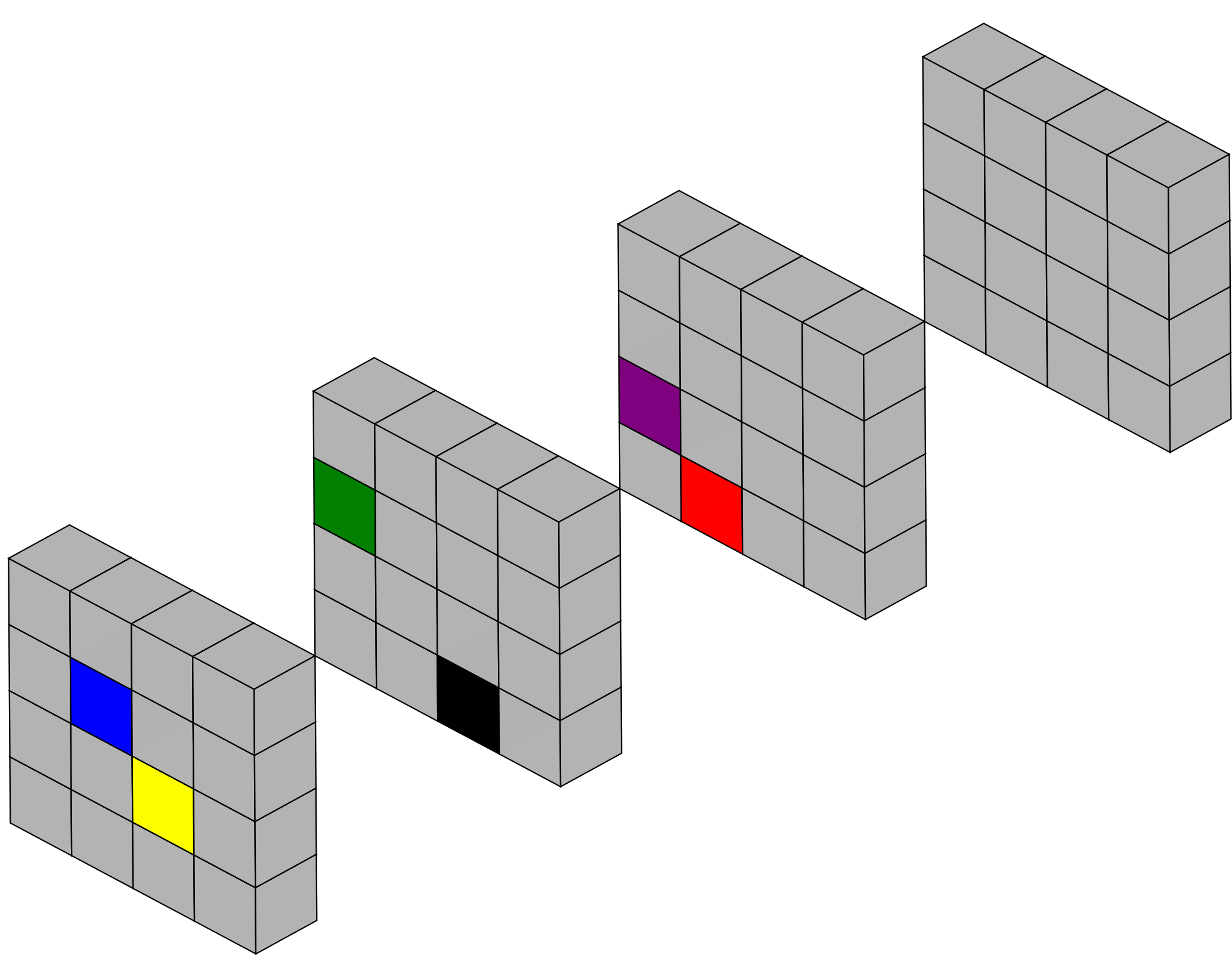}
\caption{The columns assigned to the colors are as follows: $\varepsilon^{ }_1$ (blue),\\ $\varepsilon^{ }_2$ (yellow), $\varepsilon^{ }_3$ (green) $\varepsilon^{ }_4$ (purple), $\varepsilon^{ }_5$ (black) and $\varepsilon^{ }_6$ (red).}
\label{fig:positioning_cube}
\end{figure}

Here $C_3 \rtimes C_2 \simeq S_3$ is realised by $(b\,c\,d)(0\,1\,2)$ and $(c\,d)(0\,1)$. 
The transposition $(c\,d)$ cannot be realised in $W_d$ by mirroring along an axis in the cube since that is not consistent with the interaction between $(b\,c\,d)$ and $(c\,d)$. This can be easily be seen by looking at mirroring along all hyperplanes.  
\begin{center}
\begin{tikzcd}
(a\,b\,c\,d) (2,1,0) \arrow[rr,shift left,"(b\,c\,d)"] \arrow[rr, shift right,"(0\,1\,2)"'] \arrow[dd, shift left, "m_{012}"] \arrow[dd, shift right, "(c\,d)"']&& (a\,c\,d\,b) (0,2,1) \arrow[dd, shift left, "m_{012}"] \arrow[dd, shift right, "(c\,d)"']\\
\\
(a\,b\,d\,c)(1,2,3) \arrow[rr,shift left,"(b\,c\,d)"] \arrow[rr, shift right,"(0\,1\,2)"'] && (a\,d\,c\,b)/(a\,c\,b\,d) (3,1,2) 
\end{tikzcd}     
\end{center}
We see that the diagram does not commute, thus there is no way to assign a single column to the vertex (3,\,1,\,2). One can do this for all axes, which rules out the $C_2^3$ component in $W_3$, thus yielding $\mathcal{R}(\mathbb{X}_{\varrho})=\mathbb{Z}^d\rtimes S_3$. 
\exend
\end{example}

\begin{remark}
One can also ask whether, starting with a group $G$, one can build the centraliser $\mathcal{S}(\mathbb{X}_{\varrho})$ and  normaliser $\mathcal{R}(\mathbb{X}_{\varrho})$ organically from $G$, under a suitable embedding of $G$. Consider the Cayley embedding $G\hookrightarrow S_{|G|}$  as in Example~\ref{ex:quaternions}. We know that $\text{cent}_{S_{|G|}}(G)\simeq G$ and  $\text{norm}_{S_{|G|}}(G)\simeq G\rtimes \text{Aut}(G)$; see \cite{Seh}. Since
the automorphisms of $G$ are given by conjugation in $S_{|G|}$, they define letter-exchange maps which are compatible with reversors in $\mathcal{R}(\mathbb{X}_{\varrho})$.
By choosing the dimension appropriately, one can  construct a substitution $\varrho$ on $\mathcal{A}=G$ such that the extended symmetry group is given by 
\[
\mathcal{R}(\mathbb{X}_\varrho)= \big(\mathbb{Z}^{d(G)}\times G \big)\rtimes \text{Aut}(G),
\]
where we choose $d(G)$ such that  $\text{Aut}(G)\leqslant W_{d(G)}$. This can always be done for $d(G)=|G|$, but depending on $\text{Aut}(G)$, a smaller dimension is possible. Let $\pi\in \text{Aut}(G)$ and let $A_{\pi}\in W_d$. 
The construction from the proof of Theorem \ref{thm: G and P construction} can be applied. Here, the orbits of $A_{\pi}$  will not be filled with the same element, but with columns that are determined by $\pi$, i.e., $\varrho^{ }_{A_{\pi}(\boldsymbol{i})}=\pi\circ\varrho^{ }_{\boldsymbol{i}}\circ\pi^{-1}$, where $\pi$ is seen as an element of $S_{|G|}$.  \exend
\end{remark}

These series of examples with more complicated structure can be generalised for arbitrary groups $G$ and $P$. Here, we have the following version  of Theorem~\ref{thm: G and P construction} where the letter exchange map is no longer $\pi=\text{id}$, which we build from a specific set of columns.

\begin{theorem}\label{thm: G and P non id}
Let $H,P$ be arbitrary finite groups. Then
for all $\ell\geqslant c(P)$, where $c(P)$ is a constant which depends only on the group $P$, there is a shift space $\mathbb{X}_{\varrho}$ originating from an aperiodic, primitive and bijective substitution $\varrho$ such that 
\begin{align*}
    \mathcal{S}(\mathbbm{X_\varrho})&=\mathbbm{Z}^\ell \times H\\
    \mathcal{R}(\mathbbm{X_\varrho})&=(\mathbbm{Z}^\ell \times H ) \rtimes P.
\end{align*} 
\end{theorem}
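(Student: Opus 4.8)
The plan is to mimic the Cantor-type construction of Theorem~\ref{thm: G and P construction}, but to allow the letter-exchange maps attached to extended symmetries to be non-trivial, exactly as in the $\text{Aut}(G)$-remark preceding the statement. I would fix a homomorphism $\phi\colon P\to\text{Aut}(H)$ (the action that makes the semidirect product meaningful; the trivial action recovers Theorem~\ref{thm: G and P construction}) and take the alphabet to be $\mathcal{A}=H$. Choosing columns among the left-multiplication maps $L_s\colon g\mapsto s\cdot g$ for $s$ in a generating set of $H$, these generate the left Cayley embedding of $H$ in $S_{|H|}$; by Theorem~\ref{thm:symm_group} together with the computation underlying Theorem~\ref{thm: DDMP-result}, the finite part of $\mathcal{S}(\mathbb{X}_\varrho)$ is the corresponding centraliser, namely the right Cayley embedding, which is $\simeq H$. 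The automorphism $\phi(g)$, viewed as a permutation of $\mathcal{A}=H$, is the letter-exchange map $\pi_g$; a direct check gives $\pi_g^{-1}\circ L_s\circ\pi_g=L_{\phi(g)^{-1}(s)}$ and $\pi_g^{-1}\circ R_h\circ\pi_g=R_{\phi(g)(h)}$, so conjugation by $\pi_g$ realises $\phi(g)$ on the letter-exchange copy of $H$ and merely permutes the left-multiplication columns. When $H$ is trivial I would instead enlarge the alphabet and use columns generating the full $S_{|\mathcal{A}|}$, as in Example~\ref{ex: S4 example}.

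Next I would embed $P$ into the hyperoctahedral group $W_\ell$ for $\ell$ large enough that $P$ admits a free orbit on the lattice positions of a cubic support $R=[0,L-1]^\ell$; the minimal such dimension is the constant $c(P)$, and $c(P)\leqslant|P|$ via the Cayley embedding $P\hookrightarrow S_{|P|}\hookrightarrow W_{|P|}$. Working with a cube automatically yields $[A,Q]=0$ and $|A|\boldsymbol{x}_1=\boldsymbol{x}_1$ for every $A\in W_\ell$, so the hypotheses of Theorem~\ref{thm: extended symmetries iff} reduce to the column condition. For each generator $s$ of $H$ I would pick a position $\boldsymbol{p}$ with free $P$-orbit and label that orbit $\phi$-equivariantly, placing at $A_g^{(1)}(\boldsymbol{p})$ the column $L_{\phi(g)(s)}$; freeness of the action guarantees that this assignment is well defined, since positions correspond bijectively to elements of $P$. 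All remaining positions are filled with identity columns, and I would keep an entire $A$-invariant orbit of identity columns to supply the standing hypothesis of Theorem~\ref{thm: extended symmetries iff}. By construction $\pi_g^{-1}\circ\varrho_{\boldsymbol{i}}\circ\pi_g=\varrho_{A_g^{(1)}(\boldsymbol{i})}$ for all $\boldsymbol{i}\in R$ and all $g\in P$, so each $g$ produces a genuine extended symmetry $f_g$ with matrix $A_g$ and letter exchange $\pi_g$.

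To pin down the group structure I would argue, as in Theorem~\ref{thm: G and P construction}, that no $A\in W_\ell\setminus P$ can underlie an extended symmetry: the non-identity columns occupy exactly a union of $P$-orbits, so such an $A$ would be forced to send a non-identity column to an identity position, or to a non-conjugate column, which Eq.~\eqref{eq: HD sufficient} forbids. Hence $\mathcal{R}(\mathbb{X}_\varrho)/\mathcal{S}(\mathbb{X}_\varrho)\simeq P$ in view of Theorem~\ref{thm: hyperoctahedral HD}. Since both $g\mapsto A_g$ and $g\mapsto\pi_g$ are homomorphisms, $g\mapsto f_g$ splits the extension $1\to\mathcal{S}(\mathbb{X}_\varrho)\to\mathcal{R}(\mathbb{X}_\varrho)\to P\to1$; conjugation by $f_g$ preserves the shift part $\mathbb{Z}^\ell$ acting via $A_g$ and preserves the letter-exchange part $H$ acting via $\phi(g)$. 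This is precisely the action defining $(\mathbb{Z}^\ell\times H)\rtimes P$, giving $\mathcal{R}(\mathbb{X}_\varrho)\simeq(\mathbb{Z}^\ell\times H)\rtimes P$. Aperiodicity then follows from Proposition~\ref{prop: aperiodicity HD proximal}, using the abundance of identity columns to exhibit a $2\times\cdots\times2$ all-identity block together with a translate differing in exactly one corner.

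The main obstacle I expect is the bookkeeping that makes the equivariant labeling consistent: I must verify that a free $P$-orbit exists in dimension $c(P)$ (and identify $c(P)$), that the single family $\{\pi_g\}_{g\in P}$ simultaneously satisfies Eq.~\eqref{eq: HD sufficient} across all orbits used to place the generators of $H$, and --- most delicately --- that the induced conjugation action on the right-multiplication copy of $H$ is exactly $\phi$ rather than its inverse or some twist. Getting these conventions straight is what distinguishes the semidirect factor $(\mathbb{Z}^\ell\times H)\rtimes P$ from the direct product that would appear under a trivial action, and it is essentially the only place where the argument genuinely departs from the proof of Theorem~\ref{thm: G and P construction}.
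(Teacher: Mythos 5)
Your proposal is correct in outline, but it is a genuinely different construction from the paper's. The paper never works over the alphabet $\mathcal{A}=H$: it takes an alphabet of size $\ell\lvert H\rvert$ (pairs of a ``letter'' in $\{a,\dots,\ell\}$ and an ``index'' in $\{1,\dots,\lvert H\rvert\}$), uses as columns all transpositions of $S_\ell$ duplicated across the $\lvert H\rvert$ index classes together with Cayley columns of $H$ acting on the index coordinate alone, and realises $P$ through the chain $P\hookrightarrow S_\ell\simeq\mathrm{Inn}(S_\ell)$ acting by conjugation on the transposition columns; this is why the paper's $c(P)$ is essentially the minimal degree of a faithful permutation representation of $P$, and why $\ell=2,3,6$ are excluded there ($\mathrm{Aut}(S_\ell)=\mathrm{Inn}(S_\ell)$ fails for these values). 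In the paper's realisation the reversor letter exchanges are pure letter permutations and hence commute with the index copy of $H$; the novelty is that $\pi\notin\mathrm{cent}(G_\varrho)$, not a twisted $P$-action on $H$. Your route --- alphabet $H$, left-multiplication columns, reversor letter exchanges taken in $\mathrm{Aut}(H)$ with $\phi$-equivariant placement along a free orbit --- is essentially the method the paper only sketches in the remark following Example~\ref{ex: S4 example}, where $\varrho^{ }_{A_{\pi}(\boldsymbol{i})}=\pi\circ\varrho^{ }_{\boldsymbol{i}}\circ\pi^{-1}$. What each buys: yours is leaner ($\lvert H\rvert$ letters versus $\ell\lvert H\rvert$) and strictly more informative when $\phi$ is non-trivial, since it realises a prescribed twist of $H$ by $P$; the paper's decouples $P$ from $\mathrm{Aut}(H)$ entirely, so it needs no relation between $P$ and the automorphisms of $H$. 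Since the statement leaves the action in $(\mathbb{Z}^\ell\times H)\rtimes P$ unspecified, your construction with trivial $\phi$ already proves it, collapsing to Theorem~\ref{thm: G and P construction}.

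The convention issue you flagged is real and must be resolved in one specific way, so let me record it. With your labels ($L_{\phi(h)(s)}$ at $A_h^{(1)}(\boldsymbol{p})$) and $\pi_g=\phi(g)$, imposing Eq.~\eqref{eq: HD sufficient} simultaneously for all $g\in P$ at the positions $\boldsymbol{i}=A_h^{(1)}(\boldsymbol{p})$ forces $\phi(g)^{-1}\circ\phi(h)=\phi(gh)$; taking $h=e$ shows every $\phi(g)$ would have to be an involution, so the naive choice only works when $\phi(P)$ is elementary abelian of exponent $2$. The fix is to attach to the matrix $A_g$ the letter exchange $\pi_g:=\phi(g)^{-1}$: then $\pi_g^{-1}\circ L_{\phi(h)(s)}\circ\pi_g=L_{\phi(g)\phi(h)(s)}=L_{\phi(gh)(s)}$, which is exactly the label at $A_{gh}^{(1)}(\boldsymbol{p})$, and the induced conjugation on the right Cayley copy is $R_t\mapsto R_{\phi(g)(t)}$, i.e.\ the action on the $H$-factor is $\phi$ on the nose. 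Note also that $f_g\circ f_h$ has matrix part $A_hA_g$ and letter part $\pi_g\pi_h$, so $g\mapsto f_g$ is an anti-homomorphism and it is $g\mapsto f_{g^{-1}}$ that splits the extension. With this bookkeeping your argument goes through: the exclusion of $A\in W_\ell\setminus P$ via the free $W_\ell$-orbit (the coset $A\cdot P\boldsymbol{p}$ is disjoint from $P\boldsymbol{p}$, so a non-identity column would be forced to be conjugate to the identity), primitivity from transitivity of the left Cayley action via Proposition~\ref{prop:group_primitivity}, and aperiodicity via Proposition~\ref{prop: aperiodicity HD proximal} all match the paper's own use of the Theorem~\ref{thm: G and P construction} machinery.
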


\begin{proof} 
The proof will be divided into two parts, beginning with a manual for the construction of the substitution and a second part where we verify the claims made in the construction and check if the subshift has the desired properties.

\begin{itemize}
    \item We first turn our attention to the construction of $P$ which later is supposed to be isomorphic to $\mathcal{R}(\mathbbm{X_\varrho}) / \mathcal{S}(\mathbbm{X_\varrho})$. For that purpose we embed $P \hookrightarrow S_\ell$ which is certainly possible for some $\ell$. It is clear that there is a minimal $c(P)\in\mathbb{N}$ for which this embedding is possible, and that every $\ell\geqslant c(P)$ gives a valid embedding as well. This means the choice of $\ell$ has a lower bound, but can be increased arbitrarily. This chosen $\ell$ determines the dimension of the space $\mathbb{Z}^{\ell}$ where the subshift is constructed. Let us now fix a suitable $\ell$, excluding $\ell=2,3,6$ since we use want to use $\text{Aut}_{S_\ell}(S_\ell)=\text{Inn}_{S_\ell}(S_\ell)\simeq S_\ell$ which does not hold for these values of $\ell$; see \cite{Seg}. 
    \item Next, we look for suitable columns for our substitution. Choose the set $T=\{ \epsilon^{}_1, \cdots \epsilon^{}_k \}$ of all transpositions in $S_{\ell}$, together with the identity column as the set of columns. $T$ generates $S_\ell$ and the action of $S_\ell$ (viewed as the automorphism group) acts faithfully on $T$. From this, we get that $P \subset S_\ell \simeq \text{Inn}_{S_{\ell}}(S_{\ell}) \subset \text{norm}_{S_\ell}  (\{ \epsilon^{}_1, \cdots, \epsilon^{}_k \}  ) $. 
    This is enough for now, since $P \subset \text{norm}_{S_\ell}(\{ \epsilon^{}_1, \cdots, \epsilon^{}_k \})$ and we can exclude the surplus later.  
    \item Now, we compute the centraliser of the column group. In our current construction the centraliser is trivial, which is why we need to modify our columns. We do this by extending our alphabet $\{a, \cdots, \ell\}$ to $\{a^{}_1, \cdots, a^{}_{|H|},b^{}_1, \cdots, b^{}_{|H|}, \cdots, \ell^{}_1, \cdots, \ell^{}_{|H|} \}$. We simply duplicate the cycles in each column: The permutations of the columns are mapped by $\rho  \rightarrow \rho ' $ sending $\epsilon^{}_i=(x \, y) \mapsto \varepsilon^{}_i=(x^{}_1 \, y_1^{}) \cdots ( x^{}_{|H|} \,y^{}_{|H|})$. 
    \item We embed $G \hookrightarrow S_{|H|}$ with the usual Cayley embedding. This group is only acting on the indices of the letters in the new alphabet. The action on the indices is applied to every $\{a, \ldots, \ell\}$, giving the final set of columns $\{ \eta^{}_1, \ldots, \eta^{}_m \}$ added to the substitution $\rho'$ giving a new substitution $\varrho$. 
    \item The Cayley embedding guarantees that $\text{cent}_{S_{|H|}}(G_\varrho) \simeq H$, where $G_\varrho$ is the column group of $\varrho$. We can decrease the size of $\mathcal{R}(\mathbbm{X_\varrho}) / \mathcal{S}(\mathbbm{X_\varrho})$ with the same arguments as in Theorem~\ref{thm: G and P construction}. This way we achieve a group $\mathcal{R}(\mathbbm{X_\varrho})/\mathcal{S}(\mathbbm{X_\varrho}) \simeq P$ where the letter exchange component $\pi$ of the extended symmetries are not in $\text{cent}_{S_{|H|}}(G_\varrho)$. 
\end{itemize}

Aperiodicity of $\mathbb{X}_\varrho$ can be easily obtained via proximal pairs. Regarding primitivity, it is sufficient to check the transitivity of $G_\varrho$ and use Proposition \ref{prop:group_primitivity}. For any pair $(x^{ }_j,y^{ }_k)$ of letters with indices chosen from the alphabet we need to find a $g \in G_\varrho$ such that $gx^{}_j=y^{}_k$. Note that the permutation $(x^{}_1 \, y^{}_1) \cdots (x^{}_j \, y^{}_j) \cdots (x^{}_{|H|} \, y^{}_{|H|})\in G_\varrho$ and maps $x^{}_j$ to $y^{}_j$. Now we need to map $y^{}_j$ to $y^{}_k$, which is an action solely on the indices. The mapping on the indices can be realized by the right embedding copy of $H$ in $S_{|H|}$ and thus by an element composed of the columns $\{ \eta^{}_1, \cdots, \eta^{}_m \}$.

Let us prove that the centraliser is indeed isomorphic to $G$. The centraliser of $G_{ \{ \varepsilon^{}_1, \cdots \varepsilon^{}_k \} }$ can only contain elements that are pure index permutations, since those columns generate $S_\ell$. Since the structure of the cycles in each column are independent of the index, any index permutation is an element of $\text{cent}_{S_\ell}(G_{ \{ \varepsilon^{}_1, \cdots \varepsilon^{}_k \} })=S_\ell$. 

 We continue by determining $\text{cent}_{S_{\ell|H|}}(G_{ \{ \eta^{}_1, \cdots, \eta^{}_m \} }) \bigcap S_\ell$. The group $S_\ell$ are the pure index switches and since $\eta^{}_1, \cdots, \eta^{}_m$ are the columns generated by the Cayley embedding of $H$ into $S_\ell$ their centraliser is isomorphic to $H$. 

The following rule lifts an automorphism $h'$ on $G_\rho$ to $h$ on $G_\varrho$ . 
\begin{align*}
    h(\epsilon^{}_i)= h'(\epsilon)^{}_i
\end{align*}
Thus  $S_{|H|} \leqslant \text{Aut}_{S_{\ell|H|}}(G_{ \{ \varepsilon^{}_1, \cdots, \varepsilon^{}_k \} })$. It is sufficient to prove that the automorphism group did not decrease in size by the addition of the columns $(\eta^{}_1, \cdots, \eta^{}_m)$. Then we can use the geometric placement of the columns in Theorem~\ref{thm: G and P construction} in the substitution to exclude any unwanted $W_d$-component. Any lifted automorphism $h$ still only maps the letters and fixes the indices. Since the cycles in any  $\eta^{}_1, \cdots, \eta^{}_m$ contain only the same letter with different indices and the index structure is independent of the letter, every $h$ is in $\text{cent}_{S_{|H|\ell}}(G_{ \{ \eta^{}_1, \cdots, \eta^{}_m \} })$ and surely legal. Thus it is an automorphism on the whole of $G_\varrho$.
\end{proof}

\begin{remark}
Theorems~\ref{thm: G and P construction} and \ref{thm: G and P non id} fall under realisation theorems for shift spaces. The most general current result along this vein known to the authors is that of Cortez and Petite, which states that every countable group $G$ can be realised as a subgroup $G\leqslant \mathcal{R}(\mathbb{X},\varGamma)$, where  $\mathcal{R}(\mathbb{X},\varGamma)$ is the normaliser of the action of a free abelian group $\varGamma$ on an aperiodic minimal Cantor space $\mathbb{X}$; see \cite{CP}.  \exend 
\end{remark}

\section{Concluding remarks}

While the higher-dimensional criteria in Theorems~\ref{thm: extended symmetries iff} and \ref{thm: exclude HD symmetries}, which confirm or rule out the existence of extended symmetries, are rather general, it remains unclear how to find a way to extend this to a larger (possibly all) class of systems, with no constraints on the geometry of the supertiles. 
This is related to a question of determining whether, given a substitution in $\mathbb{Z}^d$ (or $\mathbb{R}^d$), one can come up with an algorithm which decides whether there is a simpler substitution  which generates the same or a topologically conjugate hull, which is easier to investigate. This is exactly the case for the two-dimensional Thue--Morse substitution  in Figure~\ref{fig:alternate_thuemorse}. Such an issue is non-trivial both in the tiling and the subshift context; see \cite{CD,DL,HRS}.

Note that the letter-exchange map $\pi\in S_{|\mathcal{A}|}$ in Theorem~\ref{thm: extended symmetries iff} always induces a conjugacy between columns whenever it generates a valid reversor. 
 It would be interesting to know whether outer automorphisms in this case can yield valid reversors for a bijective substitution subshift in $\mathbb{Z}^d$, for example for those whose geometries are not covered by Theorems~\ref{thm: extended symmetries iff} and \ref{thm: exclude HD symmetries}.  
For instance, $\text{Aut}(S_6)$ contains elements which are not realised by conjugation. 

Another natural question would be to determine other possibilities for $\mathcal{S}(\mathbb{X}_{\varrho})$ and $\mathcal{R}(\mathbb{X}_{\varrho})$ outside the class of bijective, constant-length substitutions.
Here, the higher-dimensional generalisations of  the Rudin--Shapiro substitution would be good candidates; see \cite{Frank}. There are also substitutive planar tilings with $|\mathcal{R}(\mathbb{X})/\mathcal{S}(\mathbb{X})|=D_6$, which arises from the hexagonal symmetry satisfied by the underlying tiling. For these classes, and in the examples treated above, the simple geometry of the tiles introduces a form of rigidity which leads to $\mathcal{R}(\mathbb{X})$ being  a finite extension of $\mathcal{S}(\mathbb{X})$; see \cite[Sec.~5]{BRY} for the notion of hypercubic shifts. There are substitution tilings whose expansive maps $Q$ are no longer diagonal matrices, and whose supertiles have fractal boundaries; compare \cite[Ex.~12]{Frank3}, which allows more freedom in terms of admissible elements of $\text{GL}(d,\mathbb{Z})$ which generate reversors. This raises the following question:

\begin{question}
What is the weakest condition on the shift space/tiling dynamical system $\mathbb{X}$ which guarantees $\left[\mathcal{R}(\mathbb{X}):\mathcal{S}(\mathbb{X})\right]<\infty$?  
\end{question}

This is always true in one dimension regardless of complexity, since either the subshift is reversible or not, but is non-trivial in higher dimensions because $|\text{GL}(d,\mathbb{Z})|=\infty$ for $d>1$, so infinite extensions are possible; see \cite{BBHLN}. 
We suspect that this is connected to the notions of linear repetitivity, finite local complexity, and rotational complexity; compare \cite[Cor.~4]{BRY} and \cite{HRS}. 
 For inflation systems,
the compatibility condition $[A,Q]=0$ in Theorem~\ref{thm: extended symmetries iff} might also be necessary in general when the maximal equicontinuous factor (MEF) has an explicit form. 

\section{Acknowledgements}

The authors would like to thank Michael Baake for fruitful discussions and for valuable comments on the manuscript. AB is grateful to ANID (formerly CONICYT) for the financial support received under the Doctoral Fellowship ANID-PFCHA/Doctorado Nacional/2017-21171061. NM would like to acknowledge the support of the German Research Foundation (DFG) through the CRC 1283.

\bibliographystyle{alpha}

\end{document}